\newtheorem{claim}{\bf \t}[part]
\newtheorem{Corollary}{Corollary}[part]
\newtheorem{Definition}{Definition}[part]
\newtheorem{Lemma}{Lemma}[part]
\newtheorem{Proposition}{Proposition}[part]
\newtheorem{Theorem}{Theorem}[part]
\numberwithin{Assumption}{section} \numberwithin{Corollary}{section}
\numberwithin{Definition}{section} \numberwithin{equation}{section}
\numberwithin{Example}{section} \numberwithin{Lemma}{section}
\numberwithin{Proposition}{section} \numberwithin{Remark}{section}
\numberwithin{Theorem}{section}
\def\t{\rho}
\def\text#1{{\rm #1}}
\begin{document}
	\date{}
	\title{\Large \bf On the global well-posedness of 3D inhomogeneous incompressible Navier-Stokes system with density-dependent viscosity}
	\author{\small \textbf{Dongjuan Niu},$^{a,1}$	
		\thanks{The first author wish to thank the the key research project of National Natural Science Foundation (Grant No. 11931010).  E-mail: djniu@cnu.edu.cn}\quad
		\textbf{Lu Wang},$^{a,2}$
		\thanks{E-mail: wlu1130@163.com}}
	
	\maketitle
	\small $^a$ School of Mathematical Sciences, Capital Normal University, Beijing
	100048, P. R. China\\[2mm]

	{\bf Abstract:} In this paper, we are concerned with the global well-posedness of 3D inhomogeneous incompressible Navier-Stokes equations with density-dependent viscosity  when the initial velocity is sufficiently small in the critical Besov space $\dot{B}^{\frac{1}{2}}_{2,1}$. Compared with the previous result of Abidi and Zhang (Science China Mathematics 58 (6) (2015) 1129-1150), we remove the smallness assumption of the viscosity $\mu(\rho_0)-1$ in $L^{\infty}$-norm.


	{\bf Key Words:} Incompressible Navier-Stokes system, Global well-posedness, Time-decay estimates.
	
	
	\section{Introduction } \setcounter{equation}{0}
	\setcounter{Assumption}{0} \setcounter{Theorem}{0}
	\setcounter{Proposition}{0} \setcounter{Corollary}{0}
	\setcounter{Lemma}{0}
	
	\ \ \ The purpose of this paper is to investigate the global well-posedness of the following 3D inhomogeneous incompressible Navier-Stokes system with density-dependent viscosity
	\begin{equation}\label{a1}
	\left\{
	\begin{array}{l}
	\partial_{t}\rho+\operatorname{div}(\rho u)=0,\quad (t,x)\in \mathbb{R}^{+}\times\mathbb{R}^{3},                \\
	\partial_{t}(\rho u)+\operatorname{div}(\rho u\otimes u)-\operatorname{div}(2\mu(\rho)\mathbb{D}u)+\nabla\pi=0, \\
	\operatorname{div} u=0,                                                                                         \\
	(\rho,u)|_{t=0}=(\rho_0,u_0),
	\end{array}
	\right.
	\end{equation}
	where $\rho$ and $u=(u_{1}(t,x),u_{2}(t,x),u_{3}(t,x))$ stand for the density and velocity fields of the fluid respectively, $\mathbb{D}u=\frac{1}{2}(\partial_{i}u_{j}+\partial_j u_i),$ $\pi=\pi(t,x)$ is a scalar pressure function, and in general, the viscosity coefficient $\mu(\rho)$ is a
	smooth, positive function on $[0,\infty)$. Such a system are usually used to describe a fluid that is incompressible but has nonconstant density owing to the complex structure of the flow due to a mixture or pollution. More information on the background of (\ref{a1}) can be referred to \cite{1996L} and so on.
	
	Just similar to the classical Navier-Stokes system, i.e., $\rho=1$ and $\mu(\rho)=\mu>0$, system $(\ref{a1})$ also has the scaling invariance property: if $(\rho, u)$ solves (\ref{a1}) with initial data $(\rho_{0},u_{0})$, then for $\forall~\lambda>0$,
	\begin{equation}\label{aa5}
	(\rho,u)_{\lambda}(t,x)=(\rho(\lambda^{2}t,\lambda x),\lambda u(\lambda^{2}t,\lambda x))
	\end{equation}
	is a solution of (\ref{a1}) with initial data~$(\rho_{0}(\lambda x),\lambda u_{0}(\lambda x)).$ A functional space for data $(\rho_{0},u_{0})$ or for solution $(\rho,u)$ is said to be scaling-invariant of the equation if its norm is invariant under transformation (\ref{aa5}). In particular, $\dot{B}^{\frac{3}{p}}_{p,r}(\mathbb{R}^{3})\times \dot{B}^{\frac{3}{p}-1}_{p,r}(\mathbb{R}^{3})$ is scaling invariant according to (\ref{aa5}).

	For the constant viscosity case, i.e., $\mu(\rho)=\mu>0$, there are many progress about the global well-posedness of the system (\ref{a1}) with initial data belonging to the critical Besov spaces. Owing to the positive lower bound of the initial density, Danchin \cite{2003D} established the global well-posedness of (\ref{a1}) in the N-dimensional case under the smallness conditions  that $\|\rho_0-1\|_{\dot{B}^{\frac{N}{p}}_{p,1}}$ and $\|u_0\|_{\dot{B}^{\frac{N}{p}-1}_{p,1}}$ for any $p\in[1,N]$ with $ N=2,3.$ Later, Abidi-Gui-Zhang \cite{2012AGZ,2013AGZ} removed the size restriction of the initial density and proved the global well-posedness for (\ref{a1}) only provided that $u_{0}$ is small in $\dot{B}^{-1+\frac{3}{p}}_{p,1}(\mathbb{R}^3)$ with $p\in [3,4]$ and they also proved the global existence of 2D Navier-Stokes equations with constant viscosity in \cite{2023AGZ}, assuming that $u_{0}\in L^2\cap\dot{B}^{-1+\frac{2}{p}}_{p,1}(\mathbb{R}^2)$ with $p\in [2,+\infty[$. Recently, Zhang \cite{2020z} proved the global existence of weak solutions when $\rho_0\in L^\infty$ has a positive lower bound and  $u_0$ is sufficiently small in $\dot{B}^{\frac{1}{2}}_{2,1}$.  Danchin-Wang \cite{2022a} verified the uniqueness of solutions constructed by  \cite{2020z}  and proved the global well-poseness under the conditions that $\rho_0$ is close to a positive constant in $L^{\infty}$ space and $u_{0}$ is small in $\dot{B}^{-1+\frac{3}{p}}_{p,1}(\mathbb{R}^3)$ with $p\in(1,3).$ 
	As for the case that $\rho_0$ is allowed to vanish, Craig-Huang-Wang \cite{2013CHW} established the global well-posedness of strong solutions to (\ref{a1}) provided that $u_0$ is small in $\dot{H}^{\frac{1}{2}}.$ More results about well-posedness results in the critical spaces can be refferred to \cite{2011AGZ,2004D,2012DM,2022q,2017j} and so on.
	
	The investigation of the inhomogeneous incompressible Navier-Stokes equaitons with density-dependent viscosity started from Lions \cite{1996L}, who first proved the global existence of weak solutions to system (\ref{a1}) allowing the density to vanish initially. However, the uniqueness and regularities of such weak solutions remain open in two spatial dimensions. Later, Desjardins \cite{1997D} proved the global weak solution with higher regularity for the two-dimensional case provided that the viscosity $\mu(\rho)$ is a small perturbation of a positive constant in $L^{\infty}$-norm. Abidi-Zhang \cite{2015az}  and Gui-Zhang \cite{2009GZ}  generalized the above result to strong solution under the similar assumption of initial density. Recently, when the initial density contains
	vacuum, He-Li-L${\rm \ddot{u}}$ \cite{2021HLL} constructed a unique and global strong solution for (\ref{a1}) under the assumption that
	\begin{equation}\label{aa8}
	\|u_0\|_{\dot{H}^{\beta}}\leq \varepsilon_{0}\quad{\rm with}\quad\beta\in (\frac{1}{2},1],
	\end{equation} 
	where the key idea comes from the important exponential decay-in-time properties of the velocity fields in the whole space $\mathbb{R}^3$. More results can be refferred to \cite{2004CK,2014HW,2015HW,2019LS,2015Z} and so on.

	Compared with the constant viscosity case, it is natural to ask the global well-posedness of system (\ref{a1}) under some small assumptions on $(\rho_0, u_0)$ in the critical spaces when $\mu(\rho)$ depends on the density function $\rho$? Until now, there are only several results related to this topic. For two-dimensional inhomogenous incompressible Navier-Stokes equaitons with density-dependent viscosity, Huang-Paicu-Zhang \cite{2013HPZ} proved that there exists a global and unique solution $(\rho,u)$ satisfying that $\rho-1 \in \mathcal{C}_b([0, \infty) ; \dot{B}_{q, 1}^{\frac{2}{q}}\left(\mathbb{R}^2\right))$ and $u \in \mathcal{C}_b([0, \infty) ; \dot{B}_{p, 1}^{-1+\frac{2}{p}}\left(\mathbb{R}^2\right)) \cap L^1(\mathbb{R}^{+} ; \dot{B}_{p, 1}^{1+\frac{2}{p}}\left(\mathbb{R}^2\right))$ for $1<q\leq p<4$ provided that the fluctuation of the initial density is sufficiently small. If $1<p<2N,$ $ 0<\underline{\mu}<\mu(\rho),$ Abidi \cite{2007A} proved that N-dimensional inhomogenous incompressible Navier-Stokes equaitons with density-dependent viscosity has a global solution provided that 
	\begin{equation}
	\|\rho_0-1\|_{\dot{B}^{\frac{N}{p}}_{p,1}}+\|u_0\|_{\dot{B}^{\frac{N}{p}-1}_{p,1}}\leq c_0,
	\end{equation}
	where $c_0$ sufficiently small. When the initial density is strictly away from vacuum, Abidi-Zhang \cite{2015AZ} obtained the global well-posedness of 3D system (\ref{a1}) under 
	\begin{equation}\label{aa1}
	\|\mu(\rho_0)-1\|_{L^\infty}\leq \varepsilon_{0} \quad {\rm and}\quad \|u_0\|_{L^2}\|\nabla u_0\|_{L^2}\leq \varepsilon_1,
	\end{equation}
	where $\varepsilon_0, \varepsilon_1$ are sufficently small constant.
	
	In this paper, we intend to investigate the global well-posedness of (\ref{a1}) only provided that initial velocity $u_0$ is sufficiently small in the critical Besov space $\dot{B}^{\frac{1}{2}}_{2,1}$? In other word, compared to the previous result of \cite{2015AZ}, we expect to remove the smallness assumptions on $\|\mu(\rho_0)-1\|_{L^\infty}$ in (\ref{aa1}), which plays the key role to derive the desired {\em a priori} estimates of $\|\nabla u\|_{L^1(L^{\infty})}$. We are in the position to derive that  the bound of  $\|\nabla u\|_{L^\infty(L^2)}$ can be controlled by $\|u_0\|_{ \dot{B}^{\frac{1}{2}}_{2,1}}$. 
	However, the estimate of $\|\nabla u\|_{L^\infty(L^2)}$ is strongly coupled with the decay estimates of velocity fields $u$, which in turn bring some new difficulties. The new ingredient here is to decompose the velocity fields $u$ into $v$ and $w$, where $v$ satisfies the 3D classical Navier-Stokes equations
	\begin{equation}\label{a11}
	\left\{\begin{array}{l}
	\partial_t v+v \cdot \nabla v-\Delta v+\nabla \pi_v=0, \\
	\operatorname{div} v=0,                                \\
	\left. v\right|_{t=t_0}=u\left(t_0\right),
	\end{array}\right.
	\end{equation}
	and the perturbation $w:=u-v$ satisfies
	\begin{equation}\label{a12}
	\left\{\begin{array}{l}
	\partial_t \rho+\operatorname{div}(\rho(v+w))=0,                                                                         \\
	\rho \partial_t w+\rho(v+w) \cdot \nabla w-\operatorname{div}(2\mu(\rho)\mathbb{D}w)+\nabla \pi_w                        \\
	\quad=(1-\rho)\left(\partial_t v+v \cdot \nabla v\right)-\rho w \cdot \nabla v+\operatorname{div}(2(\mu(\rho)-1)\mathbb{D}v), \\
	\operatorname{div} w=0,                                                                                                  \\
	\left.\rho\right|_{t=t_0}=\rho\left(t_0\right),\left.\quad w\right|_{t=t_0}=0.
	\end{array}\right.
	\end{equation}
	In addition, we mention that the zero intial velocity of system \eqref{a12} is very essensional. More details can be found later in Lemma \ref{Lemma-5.3}.

	Throughout this paper, we shall always assume that $\rho_0$ having a positive lower bound and
	\begin{equation} \label{a6}
	0<\underline{\mu}\leq\mu(\rho)\leq \overline{\mu}, \quad \mu(\cdot)\in\mathrm{W}^{3,\infty}(\mathbb{R}^{+})\quad{\rm  and }\quad\mu(1)=1.
	\end{equation}
	\par Now, we state our main result:
	\begin{Theorem}\label{Theorem-1} Let $q\in(3,6)$ and $\delta\in(\frac{1}{2},\frac{3}{4})$. Assume that the initial data $(\rho_{0},u_{0})$ satisfies the regularity condition
		$$\rho_{0}-1\in B^{\frac{3}{2}}_{2,1}(\mathbb{R}^{3}),~\nabla\mu(\rho_0)\in L^{q},~u_{0}\in \dot{B}^{\frac{1}{2}}_{2,1}\cap\dot{H}^{-2\delta}(\mathbb{R}^{3}).$$
		Then there exists some small positive constant $\varepsilon$ depending on $\|\rho_{0}-1\|_{B^{\frac{3}{2}}_{2,1}},$ $\|\nabla\mu(\rho_0)\|_{L^{q}}$ and $\|u_0\|_{\dot{H}^{-2\delta}}$ such that if
		\begin{equation}\label{a7}
		\|u_{0}\|_{\dot{B}^{\frac{1}{2}}_{2,1}}\leq\varepsilon,
		\end{equation}
		the Cauchy problem $(\ref{a1})$ admits a unique global strong solution~$(\rho,u,\nabla\pi)$ satisfying that for any $0<T<\infty$,
		\begin{equation}
		\left\{\begin{array}{l}
		\rho-1 \in \mathcal{C}\left([0,T] ;B^{\frac{3}{2}}_{2,1}\right),~\nabla \mu(\rho) \in \mathcal{C}\left([0,T] ; L^q\right),                \\
		u \in \mathcal{C}\left([0,T] ; \dot{B}^{\frac{1}{2}}_{2,1}\right)\cap L^{1}\left([0,T]; \dot{B}^{\frac{5}{2}}_{2,1}\right),               \\
		\pi\in L^{1}\left([0,T] ; L^2\cap\dot{B}^{\frac{3}{2}}_{2,1}\right)~and~u_t \in L^{\infty}\left([0,T];\dot{B}^{\frac{1}{2}}_{2,1}\right), \\
		\end{array}\right.
		\end{equation}
	\end{Theorem}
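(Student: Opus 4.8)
The plan is to construct the solution by an approximation scheme, derive uniform global-in-time {\em a priori} estimates under the smallness of $\|u_0\|_{\dot{B}^{1/2}_{2,1}}$, and then pass to the limit; uniqueness follows by a standard energy-type argument in a space of slightly lower regularity. The main conceptual work is entirely in the {\em a priori} estimates. First I would record the basic energy balance for $(\rho,u)$, which gives control of $\|u\|_{L^\infty(L^2)}$ and $\|\sqrt{\mu(\rho)}\,\mathbb{D}u\|_{L^2(L^2)}$, together with the transport bound $\underline\mu\le\mu(\rho(t,\cdot))\le\overline\mu$ and the propagation of regularity for the density in $B^{3/2}_{2,1}$ and of $\nabla\mu(\rho)$ in $L^q$ along the (divergence-free) flow of $u$ — these are classical once one has $\nabla u\in L^1(L^\infty)$, so the whole scheme is driven by closing an estimate for $\|\nabla u\|_{L^1_T(L^\infty)}$.

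Next, following the strategy flagged in the introduction, I would fix the decomposition $u=v+w$, where $v$ solves the homogeneous Navier–Stokes system \eqref{a11} with data $u(t_0)$ and $w$ solves \eqref{a12}. For $v$ one invokes the small-data global theory for classical Navier–Stokes in the scaling-critical space $\dot B^{1/2}_{2,1}$ (equivalently $\dot H^{1/2}$-type), obtaining $\|v\|_{L^\infty(\dot B^{1/2}_{2,1})}+\|v\|_{L^1(\dot B^{5/2}_{2,1})}\lesssim\|u(t_0)\|_{\dot B^{1/2}_{2,1}}$ plus the {\em decay-in-time} estimates for $v$ and $\partial_t v+v\cdot\nabla v$ coming from the assumed extra low-frequency information $u_0\in\dot H^{-2\delta}$ with $\delta\in(\tfrac12,\tfrac34)$; these decay rates are what make the forcing terms on the right of \eqref{a12} — namely $(1-\rho)(\partial_t v+v\cdot\nabla v)$, $\rho\,w\cdot\nabla v$ and $\operatorname{div}\big(2(\mu(\rho)-1)\mathbb{D}v\big)$ — integrable in time. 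For $w$, the crucial point (this is Lemma \ref{Lemma-5.3} in the paper) is that $w|_{t=t_0}=0$, so a standard parabolic energy estimate for \eqref{a12}, using $\operatorname{div}w=0$ and the positivity $\mu(\rho)\ge\underline\mu$, absorbs the quadratic terms and gains smallness from the short elapsed time together with the smallness/decay of $v$; this yields $\|w\|_{L^\infty_T(\dot H^1)}$ and $\|w\|_{L^2_T(\dot H^2)}$ small, and then $\|\nabla w\|_{L^1_T(L^\infty)}$ small by interpolation — all {\em without} any smallness of $\|\mu(\rho_0)-1\|_{L^\infty}$, because the viscosity contrast now appears only in a forcing term multiplying $\mathbb{D}v$, which is itself small and time-decaying, rather than in the principal part. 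Combining, $\|\nabla u\|_{L^1_T(L^\infty)}\le\|\nabla v\|_{L^1_T(L^\infty)}+\|\nabla w\|_{L^1_T(L^\infty)}$ is controlled by $\|u_0\|_{\dot B^{1/2}_{2,1}}$ and the initial norms listed in the theorem, uniformly in $T$.

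With $\|\nabla u\|_{L^1(L^\infty)}$ in hand, I would close the higher-order bounds: propagate $\rho-1$ in $C([0,T];B^{3/2}_{2,1})$ and $\nabla\mu(\rho)$ in $C([0,T];L^q)$ by the transport estimates; then run the momentum equation viewed as a (variable-coefficient) Stokes system with right-hand side $-\rho u\cdot\nabla u+\operatorname{div}(2(\mu(\rho)-1)\mathbb{D}u)$, using maximal $L^1_T(\dot B^{5/2}_{2,1})$-regularity for the Stokes operator and the product/commutator estimates in Besov spaces (here $q\in(3,6)$ and the algebra structure of $B^{3/2}_{2,1}$ in $\mathbb{R}^3$ are used to handle the variable-viscosity term $\operatorname{div}((\mu(\rho)-1)\mathbb{D}u)$ — one splits it, puts the low-frequency/large part into the parabolic left side via a time-splitting or a second decomposition, and treats the remainder perturbatively), obtaining $u\in C([0,T];\dot B^{1/2}_{2,1})\cap L^1_T(\dot B^{5/2}_{2,1})$, $\nabla\pi\in L^1_T(L^2\cap\dot B^{3/2}_{2,1})$ and $u_t\in L^\infty_T(\dot B^{1/2}_{2,1})$. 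Uniqueness is then obtained by estimating the difference of two solutions in, say, $L^\infty_T(L^2)\cap L^2_T(\dot H^1)$ for the velocity and a correspondingly weaker norm for the density (a Lagrangian change of variables, or the Danchin–Mucha method, circumvents the loss of one derivative on $\rho$), and a Gronwall argument using the already-established bounds closes it.

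The step I expect to be the main obstacle is the {\em a priori} estimate of $w$ in \eqref{a12} on a long time interval: one must quantify precisely how the smallness of $v$, its time decay (dictated by the exponent $\delta$), and the vanishing initial data for $w$ interact so that the forcing terms — especially $\operatorname{div}(2(\mu(\rho)-1)\mathbb{D}v)$, which carries the {\em non-small} viscosity contrast — are summable in time uniformly in $T$ and do not feed back destructively into $\|\nabla u\|_{L^1(L^\infty)}$; getting the bookkeeping of the decay rates and the continuation-in-time argument right is where the real difficulty lies, and it is exactly the place where removing the hypothesis $\|\mu(\rho_0)-1\|_{L^\infty}\le\varepsilon_0$ of \cite{2015AZ} has to be paid for.
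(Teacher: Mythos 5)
Your overall architecture matches the paper's: local existence, propagation of $u_0\in\dot H^{-2\delta}$, the decomposition $u=v+w$ with $v$ solving classical Navier--Stokes from data $u(t_0)$ and $w$ starting from zero data, and a bootstrap driven by $\|\nabla u\|_{L^1_T(L^\infty)}$ that propagates $\nabla\mu(\rho)$ in $L^q$ without any smallness of $\mu(\rho_0)-1$. But there is a genuine gap exactly at the step you yourself flag as the obstacle: you claim that the energy estimates for \eqref{a12} give $\|w\|_{L^\infty_T(\dot H^1)}$ and $\|w\|_{L^2_T(\dot H^2)}$ small, ``and then $\|\nabla w\|_{L^1_T(L^\infty)}$ small by interpolation.'' This does not work. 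In $\mathbb{R}^3$ no interpolation between $L^\infty_T(\dot H^1)$ and $L^2_T(\dot H^2)$ reaches $\nabla w\in L^\infty$ in space (that requires roughly $\nabla^2 w\in L^r$ with $r>3$, or $\dot B^{5/2}_{2,1}$-regularity), and, in time, $L^1$ integrability on the infinite interval $[t_0,\infty)$ requires quantitative decay, not mere finiteness of the energy norms. Since the whole point of removing the hypothesis $\|\mu(\rho_0)-1\|_{L^\infty}\le\varepsilon_0$ is that the closure of the $\|\nabla\mu(\rho)\|_{L^q}$ bound hinges on $\|\nabla u\|_{L^1(L^\infty)}$ being small, this missing step is not cosmetic: without it the bootstrap of Proposition \ref{Proposition-5.5} cannot be closed.

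What the paper actually does at this point is considerably heavier, and none of it is avoidable in your scheme. It first proves the variable-coefficient Stokes regularity estimate (Lemma \ref{Lemma-5.1}), which bounds $\|\nabla^2 u\|_{L^r}$ for $r\in(3,q]$ in terms of $\|\rho u_t\|_{L^r}$, $\|\nabla u\|_{L^2}$ and $\|\nabla\mu(\rho)\|_{L^q}$ (this is where the $L^q$ control of $\nabla\mu$ replaces the $L^\infty$ smallness of $\mu(\rho_0)-1$). Feeding this requires the time-weighted estimates $\sup_t t\|\nabla u\|^2_{L^2}$, $\sup_t t^2\|u_t\|^2_{L^2}$, $\int t^2\|\nabla u_t\|^2_{L^2}dt$ (Lemmas \ref{Lemma-5.4}--\ref{Lemma-5.5}), which in turn rest on Schonbek-type decay of $\|u\|_{L^2}$ and $\|\nabla u\|_{L^2}$ for the \emph{full} inhomogeneous system (Proposition \ref{Proposition-A.1}, Corollary \ref{Corollary-5.2}), obtained from $u_0\in\dot H^{-2\delta}$ with $\delta>\tfrac12$ by frequency splitting; your proposal attributes the decay only to $v$ and to the zero data of $w$, which is not enough to make $\|\nabla u\|_{L^1([t_0,\infty);L^\infty)}$ finite, let alone small. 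Only then does the Gagliardo--Nirenberg interpolation $\|\nabla u\|_{L^\infty}\lesssim\|\nabla u\|^{\theta_1}_{L^p}\|\nabla^2u\|^{1-\theta_1}_{L^r}$ with $p\in(2,\tfrac{6\delta_-}{1+\delta_-})$, $r>3$ (Lemma \ref{Lemma-5.6}) deliver the smallness of $\|\nabla u\|_{L^1(L^\infty)}$. A secondary (smaller) concern: in your final higher-order step you propose to treat $\operatorname{div}((\mu(\rho)-1)\mathbb{D}u)$ ``perturbatively'' after a splitting; taken literally this reintroduces a smallness requirement on $\mu(\rho)-1$, whereas the paper handles this term through commutator estimates $[\mu(\rho);\dot\Delta_j\mathbb{P}]\Delta u$ controlled by $\|\nabla\mu(\rho)\|_{L^q}\|u\|_{\dot B^{3/q+3/2}_{2,1}}$ and Gronwall, which is the consistent way to avoid that hypothesis.
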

	\vskip 0.5cm
	\par This paper is organized as follows. Section 2, we recall some basic Littlewood-Paley theory, as well as some necessary linear estimates of Navier-Stokes equations. Section 3, we establish some short time estimates related to the system (\ref{a1}). Proof of Theorem \ref{Theorem-1} is completed in Section 4, where we obtain the global well-posedness of (\ref{a1}) under the smallness assumptions (\ref{a7}).\\
	
	\par Let us complete this section with the notations we are going to use in this context.\\
	
	\par \textbf{Notations}~Let $A, B$ be two operators, we denote $[A, B]=AB-BA,$ the commutator between $A$ and $B.$ For $a\lesssim b$, we mean that there is a uniform constant $C$, which may be different on different lines, such that $a\leq Cb$. We shall denote by $(a~|~b)_{L^2}$ the $L^2(\mathbb{R}^3)$ inner product of $a$ and $b.$ For $X$ a Banach space, we denote $\|(f,g)\|_{X}\stackrel{\Delta}{=}\|f\|_{X}+\|g\|_{X}.$ Finally, $(c_{j})_{j\in\mathbb{Z}}$ (resp. $(d_{j})_{j\in\mathbb{Z}}$) will be a generic element of $\ell^{2}(\mathbb{Z})$ (resp. $\ell^{1}(\mathbb{Z})$) so that $\sum_{j\in\mathbb{Z}}c^{2}_{j}=1$ (resp. $\sum_{j\in\mathbb{Z}}d_{j}=1$).
	
	\section{Preliminaries}
	In this section, we introduce some notations and conventions, and recall some standard Lemmas of Besov space, which will be used throughout this paper, as well as some necessary linear estimates of Navier-Stokes equations.
	
	\subsection{Littlewood-Paley theory}
	Since the proof of Theorem requires a dyadic decomposition of the Fourier variables, we shall recall the Littlewood-Paley decomposition. See \cite{2011BCD} for more details.
	
	\begin{Definition}\label{Definition-1} (\cite{2011BCD}) Let $(p,r)\in[1,\infty]^{2},~s\in\mathbb{R}$ and $u\in\mathcal{S}'(\mathbb{R}^{3}),$ we set
		\begin{equation}\label{a13}
		\|u\|_{B^{s}_{p,r}}\stackrel{\mathrm{ def }}{=}(2^{qs}\|\Delta_{q}u\|_{L^{p}})_{l^{r}},\quad \|u\|_{\dot {B}^{s}_{p,r}}\stackrel{\mathrm{ def }}{=}(2^{qs}\|\dot{\Delta}_{q}u\|_{L^{p}})_{l^{r}}.
		\end{equation}
	\end{Definition}
	
	\begin{Definition}\label{Definition-2} (\cite{2011BCD}) Let $(p,\lambda,r)\in[1,\infty]^{3},$ $s\in\mathbb{R},$ $T\in(0,+\infty],$ and $u\in\mathcal{S}'(\mathbb{R}^{3}),$ we set
		\begin{equation}\label{a14}
		\|u\|_{\widetilde{L}^{\lambda}_{T}(B^{s}_{p,r})}\stackrel{\mathrm{ def }}{=}(2^{qs}\|\Delta_{q}u\|_{L^{\lambda}_{T}(L^{p})})_{l^{r}},\  \|u\|_{\widetilde{L}^{\lambda}_{T}(\dot{B}^{s}_{p,r})}\stackrel{\mathrm{ def }}{=}(2^{qs}\|\dot{\Delta}_{q}u\|_{L^{\lambda}_{T}(L^{p})})_{l^{r}}.
		\end{equation}
	\end{Definition}
	
	\begin{Lemma}\label{Lemma-2.1} $(\cite{2011BCD})$ Let $\mathcal{C}$ be the annulus $\{\xi\in\mathbb{R}^{N}:3/4\leq|\xi|\leq 8/3\}.$ There exist radial functions $\chi$ and $\varphi$, valued in the interval $[0,1],$ belonging respectively to $\mathcal{D}(\mathcal{C})$, and such that
		\begin{equation}\label{a15}
		\begin{aligned}
		& \forall~\xi\in\mathbb{R}^{N},~\chi(\xi)+ \sum_{j\geq 0} {\varphi(2^{-j\xi})}=1,\\
		& \forall~\xi\in\mathbb{R}^{N},~ \sum_{j\in\mathbb{Z}}{ \varphi(2^{-j\xi})}=1,\\
		& |j-j'|\geq 2\Longrightarrow {\rm Supp}\varphi(2^{-j\xi\cdot})\cap{\rm Supp}\varphi(2^{-j'\xi\cdot})=\varnothing,\\
		& j\geq 1\Longrightarrow {\rm Supp}\chi\cap{\rm Supp}\varphi(2^{-j'\xi\cdot})=\varnothing.
		\end{aligned}
		\end{equation}
		the set $\widetilde{\mathcal{C}}=B(0,2/3)+\mathcal{C}$~is an annulus, and we have
		\begin{equation}\label{a16}
		|j-j'|\geq 5\Longrightarrow 2^{j'}\widetilde{\mathcal{C}}\cap 2^{j}\mathcal{C}=\varnothing.
		\end{equation}
	\end{Lemma}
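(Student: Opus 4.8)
\textbf{Proof strategy for Lemma \ref{Lemma-2.1}.}

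The plan is to construct $\chi$ and $\varphi$ directly from a single smooth radial bump function and then verify the four listed properties plus the separation estimate \eqref{a16} by elementary support bookkeeping. First I would pick a radial function $\theta\in\mathcal{D}(\mathbb{R}^{N})$, valued in $[0,1]$, with $\theta\equiv 1$ on the ball $B(0,3/4)$ and $\mathrm{Supp}\,\theta\subset B(0,4/3)$; such a $\theta$ exists by the standard mollification of the indicator of an intermediate ball (convolve $\mathbf{1}_{B(0,1)}$ with a radial mollifier of small support, then rescale). Define
$$
\varphi(\xi)\stackrel{\mathrm{def}}{=}\theta(\xi/2)-\theta(\xi).
$$
Then $\varphi$ is radial, smooth, valued in $[0,1]$ (since $\theta$ is radial and nonincreasing in $|\xi|$ along rays — one should choose the mollifier so that $\theta$ is nonincreasing in the radial variable, which is automatic if one convolves a radially nonincreasing function with a radially symmetric mollifier), and $\varphi(\xi)=0$ unless $3/4\le|\xi|\le 8/3$, because $\theta(\xi/2)\ne\theta(\xi)$ forces $|\xi|\ge 3/4$ (else both equal $1$) and $|\xi|\le 8/3$ (else both equal $0$). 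Hence $\mathrm{Supp}\,\varphi\subset\mathcal{C}$ with $\mathcal{C}=\{3/4\le|\xi|\le 8/3\}$, and $\varphi\in\mathcal{D}(\mathcal{C})$. Set $\chi\stackrel{\mathrm{def}}{=}\theta$, so $\chi\in\mathcal{D}(B(0,4/3))$, is radial and $[0,1]$-valued.

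Next I would verify the two partition-of-unity identities. The telescoping identity $\theta(2^{-j}\xi)-\theta(2^{-(j+1)}\xi)=\varphi(2^{-j}\xi)$ gives, for any $J\ge 0$,
$$
\chi(\xi)+\sum_{j=0}^{J}\varphi(2^{-j}\xi)=\theta(2^{-(J+1)}\xi)\longrightarrow 1\quad\text{as }J\to\infty,
$$
since $\theta\equiv 1$ near the origin and $2^{-(J+1)}\xi\to 0$; this yields the first line of \eqref{a15}. For the second line, for fixed $\xi\neq 0$ one has $\theta(2^{-j}\xi)\to 1$ as $j\to+\infty$ and $\theta(2^{-j}\xi)\to 0$ as $j\to-\infty$, so the doubly infinite telescoping sum $\sum_{j\in\mathbb{Z}}\varphi(2^{-j}\xi)=\sum_{j\in\mathbb{Z}}\bigl(\theta(2^{-j}\xi)-\theta(2^{-(j+1)}\xi)\bigr)$ collapses to $1$; the case $\xi=0$ is trivial since every term vanishes there, but the stated identity should be read (as in \cite{2011BCD}) on $\mathbb{R}^{N}\setminus\{0\}$. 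For the third line, if $|j-j'|\ge 2$ then the dilated supports $2^{j}\mathcal{C}$ and $2^{j'}\mathcal{C}$ are disjoint: assuming $j'\ge j+2$, a point in the first annulus has $|\xi|\le 8/3\cdot 2^{j}$ while a point in the second has $|\xi|\ge 3/4\cdot 2^{j'}\ge 3/4\cdot 4\cdot 2^{j}=3\cdot 2^{j}>8/3\cdot 2^{j}$, a contradiction. For the fourth line, if $j'\ge 1$ then $\mathrm{Supp}\,\chi\subset B(0,4/3)$ and $\mathrm{Supp}\,\varphi(2^{-j'}\cdot)\subset\{|\xi|\ge 3/4\cdot 2^{j'}\}\subset\{|\xi|\ge 3/2\}$, which is disjoint from $B(0,4/3)$.

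Finally, for \eqref{a16} with $\widetilde{\mathcal{C}}=B(0,2/3)+\mathcal{C}$: a point of $\widetilde{\mathcal{C}}$ satisfies $|\xi|\le 2/3+8/3=10/3$ and $|\xi|\ge 3/4-2/3=1/12$, so $2^{j'}\widetilde{\mathcal{C}}\subset\{2^{j'}/12\le|\xi|\le 10\cdot 2^{j'}/3\}$, while $2^{j}\mathcal{C}=\{3\cdot 2^{j}/4\le|\xi|\le 8\cdot 2^{j}/3\}$. If $j'\ge j+5$, then on $2^{j'}\widetilde{\mathcal{C}}$ one has $|\xi|\ge 2^{j'}/12\ge 2^{j+5}/12=32\cdot 2^{j}/12=8\cdot 2^{j}/3$, with equality only on a measure-zero sphere; choosing the constants slightly better (e.g. taking $\mathrm{Supp}\,\varphi$ inside a slightly smaller open annulus) makes the intersection genuinely empty, as claimed. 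Conversely, if $j\ge j'+5$ one argues symmetrically using the outer radius of $2^{j'}\widetilde{\mathcal{C}}$ versus the inner radius of $2^{j}\mathcal{C}$. There is no serious obstacle here: the only point requiring a little care is the choice of $\theta$ so that $\varphi$ is genuinely $[0,1]$-valued and so that the boundary cases in \eqref{a16} are strict; this is handled by fixing, once and for all, a radially nonincreasing smooth $\theta$ supported in a ball of radius strictly less than $4/3$ and equal to $1$ on a ball of radius strictly greater than $3/4$, which is enough slack to absorb the borderline equalities above.
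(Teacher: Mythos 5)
The paper itself gives no proof of this lemma (it is quoted verbatim from the cited reference \cite{2011BCD}), and your construction is exactly the standard one from that source: take a radial, radially nonincreasing $\theta$ equal to $1$ on $B(0,3/4)$ and supported in $B(0,4/3)$, set $\chi=\theta$, $\varphi(\xi)=\theta(\xi/2)-\theta(\xi)$, and verify everything by telescoping and support bookkeeping. So in substance you are correct and on the same route. Two small repairs: first, with your definition of $\varphi$ the telescoping identity should read $\varphi(2^{-j}\xi)=\theta(2^{-(j+1)}\xi)-\theta(2^{-j}\xi)$ (your displayed identity has the sign reversed, although the partial sums you then write, $\chi(\xi)+\sum_{j=0}^{J}\varphi(2^{-j}\xi)=\theta(2^{-(J+1)}\xi)\to 1$, correspond to the correct sign). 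Second, in the borderline case $j'=j+5$ of \eqref{a16} your suggestion to ``shrink $\mathrm{Supp}\,\varphi$'' does not help, since \eqref{a16} is a purely geometric statement about the fixed sets $\mathcal{C}$ and $\widetilde{\mathcal{C}}$, not about $\varphi$; the correct observation is that $B(0,2/3)$ is an \emph{open} ball, so every $\xi=b+c\in\widetilde{\mathcal{C}}$ satisfies $|\xi|\ge |c|-|b|>3/4-2/3=1/12$ strictly, whence points of $2^{j'}\widetilde{\mathcal{C}}$ have $|\xi|>2^{j'}/12\ge 8\cdot 2^{j}/3$, which already excludes the touching sphere and gives genuine disjointness from $2^{j}\mathcal{C}$; the opposite case $j\ge j'+5$ is strict anyway, as you note. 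With these two adjustments your argument is complete.
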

	
	\begin{Lemma}\label{Lemma-2.2} $(\cite{2011BCD})$ 
		Let $\mathcal{C}$ be an annulus and $\mathcal{B}$ a ball. There exists a constant $C>0$ such that for any nonnegative integer $k,$ any couple  $(p,q)$ in $[1,\infty]^{2}$ with $q\geq p\geq 1$ and any $u$ of $L^{p}$ satisfying
		\begin{enumerate}
			\item If ${\rm Supp} \ \hat{u}\subset\lambda\mathcal{B},$ we have
			\begin{equation}\label{a17}
			\sup_{|\alpha|=k}\|\partial^{\alpha}u\|_{L^{q}}\leq C^{k+1}\lambda^{k+N(\frac{1}{p}-\frac{1}{q})}\|u\|_{L^{p}}.\end{equation}
			\item If ${\rm Supp} \ \hat{u}\subset\lambda\mathcal{C},$ we have
			\begin{equation}\label{a18} C^{-k-1}\lambda^{k}\|u\|_{L^{p}}\leq \|D^{k}u\|_{L^{p}}\leq C^{k+1}\lambda^{k}\|u\|_{L^{p}}.
			\end{equation}
		\end{enumerate} 
	\end{Lemma}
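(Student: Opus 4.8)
The plan is to prove both inequalities by reducing to the case $\lambda=1$ via rescaling and then invoking convolution estimates with suitable Schwartz functions, following the standard Bernstein argument as in \cite{2011BCD}. For part (1): since $\mathrm{Supp}\ \hat{u}\subset\lambda\mathcal{B}$, one can pick a function $\phi\in\mathcal{D}(\mathbb{R}^N)$ that equals $1$ on the ball $\mathcal{B}$ (a fixed function depending only on $\mathcal{B}$), and set $\phi_\lambda(\xi):=\phi(\lambda^{-1}\xi)$, so that $\phi_\lambda\equiv 1$ on $\lambda\mathcal{B}$ and hence $\hat{u}=\phi_\lambda\hat{u}$. Writing $g:=\mathcal{F}^{-1}\phi$, we have $\partial^\alpha u = \lambda^{N+|\alpha|}(\partial^\alpha g)(\lambda\,\cdot)\ast u$ after tracking the scaling carefully: $\mathcal{F}^{-1}(\phi_\lambda)(x)=\lambda^{N}g(\lambda x)$, and differentiating brings down a factor $\lambda^{|\alpha|}$. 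Then Young's convolution inequality with exponents chosen so that $1+\frac1q=\frac1p+\frac1r$, i.e. $r$ determined by $\frac1r=1-\frac1p+\frac1q\in[\frac1q,1]$ (this is where $q\ge p$ is used to ensure $r\ge 1$), yields
$$\|\partial^\alpha u\|_{L^q}\le \lambda^{N+|\alpha|}\|(\partial^\alpha g)(\lambda\,\cdot)\|_{L^r}\|u\|_{L^p}
=\lambda^{N+|\alpha|}\lambda^{-N/r}\|\partial^\alpha g\|_{L^r}\|u\|_{L^p}
=\lambda^{|\alpha|+N(\frac1p-\frac1q)}\|\partial^\alpha g\|_{L^r}\|u\|_{L^p}.$$
Taking the supremum over $|\alpha|=k$ and absorbing $\sup_{|\alpha|=k}\|\partial^\alpha g\|_{L^r}$ into a constant of the form $C^{k+1}$ (the exponential-in-$k$ growth is exactly what one gets by estimating derivatives of the fixed Schwartz function $g$, since $\|\partial^\alpha g\|_{L^r}\le C^{|\alpha|+1}$ uniformly) gives \eqref{a17}.

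For part (2), the right-hand inequality $\|D^k u\|_{L^p}\le C^{k+1}\lambda^k\|u\|_{L^p}$ is the special case $q=p$ of the argument above, once we note that on the annulus $\lambda\mathcal{C}$ we may replace the cutoff $\phi$ by a function $\tilde{\varphi}\in\mathcal{D}(\mathbb{R}^N\setminus\{0\})$ supported in a slightly larger annulus and equal to $1$ on $\mathcal{C}$. The left-hand (reverse) inequality is the genuinely new point: one wants to recover $\lambda^k\|u\|_{L^p}\lesssim \|D^k u\|_{L^p}$. The trick is to construct, for each fixed $k$, a symbol of the form $\Psi_k(\xi)=\frac{\tilde\varphi(\xi)}{|\xi|^{2k}}\,(\text{or an analogous Fourier multiplier built from }|\xi|^{-2k})$ so that, on the support of $\tilde\varphi$, $\Psi_k(\xi)\,\sum_{|\alpha|=k}\binom{k}{\alpha}(i\xi)^{2\alpha}/(\text{something})$ reconstructs $\hat u$ from $\widehat{\Delta^{k/2}u}$ — more cleanly, use the identity $u=\mathcal{F}^{-1}\big(\tilde\varphi(\lambda^{-1}\xi)\big)\ast u$ and write $\tilde\varphi(\lambda^{-1}\xi)=\big(\lambda^{-1}|\xi|\big)^{2k}\cdot\big(\lambda^{2k}|\xi|^{-2k}\tilde\varphi(\lambda^{-1}\xi)\big)$, so that $u=\lambda^{-2k}\,h_\lambda\ast\Delta^k u$ where $\widehat{h_\lambda}(\xi)=\lambda^{2k}|\xi|^{-2k}\tilde\varphi(\lambda^{-1}\xi)$ is smooth and compactly supported away from the origin. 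Rescaling $h_\lambda$ and applying Young's inequality with exponent $1$ on the kernel side then gives $\|u\|_{L^p}\le \lambda^{-2k}\|h_\lambda\|_{L^1}\|\Delta^k u\|_{L^p}\lesssim \lambda^{-2k}\|\Delta^k u\|_{L^p}$, and since $\|\Delta^k u\|_{L^p}\le N^k\|D^{2k}u\|_{L^p}$ one concludes after relabeling $k\mapsto 2k$ (or, to get odd $k$ as well, inserting one extra first-order derivative and using that Riesz transforms are $L^p$-bounded for $1<p<\infty$, together with a direct argument for $p=1,\infty$ using that the relevant kernels are integrable). Keeping careful track of the combinatorial constants shows the bound is $C^{-k-1}\lambda^k$ on the left, as claimed.

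The main obstacle I expect is the reverse inequality in part (2): getting the correct $C^{-k-1}$ constant uniformly in $k$ requires showing that the $L^1$-norm of the rescaled kernel $\mathcal{F}^{-1}\big(|\xi|^{-2k}\tilde\varphi(\xi)\big)$ grows at most geometrically in $k$. This is true because $|\xi|^{-2k}\tilde\varphi(\xi)$ is a Schwartz function whose derivatives of each order are bounded by $C^{k+1}$ times a constant (one differentiates $|\xi|^{-2k}$ on the annulus $3/4\le|\xi|\le 8/3$, where $|\xi|^{-2k}\le (4/3)^{2k}$ and each derivative costs a factor linear in $k$), so integrating against $(1+|x|)^{-N-1}$ after the usual non-stationary-phase/integration-by-parts step yields $\|\mathcal{F}^{-1}(|\xi|^{-2k}\tilde\varphi)\|_{L^1}\le C^{k+1}$. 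A clean way to organize this, avoiding case distinctions on the parity of $k$ and on $p$, is to prove the two-sided estimate only for the Laplacian powers $\|D^{2m}u\|_{L^p}\sim \lambda^{2m}\|u\|_{L^p}$ directly as above, and then obtain general $D^k$ by interpolating/inserting at most one derivative and using Lemma \ref{Lemma-2.1} to stay on the annulus; the full statement \eqref{a18} follows.
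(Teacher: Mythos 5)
The paper does not prove this lemma at all: it is quoted verbatim from \cite{2011BCD}, so the only benchmark is the classical Bernstein argument in that reference, and your proposal is essentially that argument. Part (1) and the right-hand side of \eqref{a18} (cutoff $\phi_\lambda$ with $\hat u=\phi_\lambda\hat u$, rescaled kernel, Young's inequality with $1+\frac1q=\frac1p+\frac1r$, geometric growth of $\|\partial^\alpha g\|_{L^r}$ in $k$) are exactly the standard proof and are correct. For the reverse inequality your route differs slightly from \cite{2011BCD}: you invert the symbol through powers of the Laplacian, $u=\pm\lambda^{-2k}h_\lambda\ast\Delta^k u$ with $\widehat{h_\lambda}(\xi)=|\lambda^{-1}\xi|^{-2k}\tilde\varphi(\lambda^{-1}\xi)$, which is fine for even orders and gives the right geometric constant since the $L^1$-norm of the kernel is scale-invariant and bounded by $C^{k+1}$. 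The one shaky spot is your treatment of odd $k$: invoking Riesz transforms is both unnecessary and problematic (they are unbounded on $L^1$ and $L^\infty$, and \eqref{a18} is claimed for all $p\in[1,\infty]$). The clean fix, which you half-identify when you say the relevant kernels are integrable, is that the localized symbols $\xi_j|\xi|^{-2}\tilde\varphi(\lambda^{-1}\xi)$ are smooth and compactly supported away from the origin, so one extra convolution step $u=\sum_j \mathcal{F}^{-1}\bigl(-i\xi_j|\xi|^{-2}\tilde\varphi(\lambda^{-1}\xi)\bigr)\ast\partial_j u$ costs $C\lambda^{-1}$ on every $L^p$, and then the even-order case applied to $\partial_j u$ finishes; alternatively, the textbook proof avoids parity altogether by using the identity $|\xi|^{2k}=\sum_{|\alpha|=k}\frac{k!}{\alpha!}\xi^{2\alpha}$ to write $u=\sum_{|\alpha|=k}g_{\alpha,\lambda}\ast\partial^\alpha u$ uniformly in $k$. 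With that adjustment your proof is complete and equivalent in strength to the cited one.
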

	
	\par In the rest of the paper, we shall frequently use homogeneous Bony's decomposition:
	\begin{equation}\label{a19}
	uv=T_{u} v+T'_{v}u=T_{u} v+T_{v} u+R(u,v),
	\end{equation}
	where
	\begin{equation}\label{a20}
	\begin{aligned}
	& T_{u}v= \sum_{q\in\mathbb{Z}}\dot{S}_{q-1}u\dot{\Delta}_{q}v,\quad T'_{v}u=\sum_{q\in\mathbb{Z}}\dot{S}_{q+2}v\dot{\Delta}_{q}u,\\
	& R(u,v)=\sum_{q\in\mathbb{Z}}\dot{\Delta}_{q}u\widetilde{\dot{\Delta}}_{q}v,\quad \widetilde{\dot{\Delta}}_{q}v=\sum_{|q'-q|\leq1}\dot{\Delta}_{q'}v.
	\end{aligned}
	\end{equation}
	Similarly, we can obtain inhomogeneous Bony's decomposition \cite{2011BCD}.\\
	\par Then we shall derive some conclusions about the transport equation, such as the commutator's estimates, which will be frequently used throughout the succeeding sections. The proof process can be referred to \cite{2011BCD} .\\
	
	\begin{Lemma}\label{Lemma-2.3} $(\cite{2011BCD})$
		Let $r\in[1,\infty],~p\in[1,\infty],$ $-1\leq s\leq 1$ and $\operatorname{div}v=0.$ Then there holds
		\begin{enumerate}
			\item If $s=-1,$
			\begin{equation}\label{a21}
			\sup_{q}2^{-q}\|[v\cdot\nabla;\dot{\Delta}_{q}]f\|_{L^{1}_{t}(L^{p})}\lesssim \int^{t}_{0}\|\nabla v\|_{\dot{B}^{0}_{\infty,1}}\|f\|_{\dot{B}^{-1}_{p,\infty}}dt;\end{equation}
			\item If $-1<s<1,$ 
			\begin{equation}\label{a22}
			(\sum_{q}2^{qsr}\|[v\cdot\nabla;\dot{\Delta}_{q}]f\|^{r}_{L^{1}_{t}(L^{p})})^{\frac{1}{r}}\lesssim \int^{t}_{0}\|\nabla v\|_{L^{\infty}}\|f\|_{\dot{B}^{s}_{p,r}}dt;\end{equation}
			\item If $s=1,$ 
			\begin{equation}\label{a23}
			\sum_{q}2^{q}\|[v\cdot\nabla;\dot{\Delta}_{q}]f\|_{L^{1}_{t}(L^{p})}\lesssim \int^{t}_{0}\|\nabla v\|_{\dot{B}^{0}_{\infty,1}}\|f\|_{\dot{B}^{1}_{p,1}}dt.\end{equation}
		\end{enumerate}
	\end{Lemma}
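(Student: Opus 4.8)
The plan is to rewrite the commutator by Bony's decomposition \reff{a19}--\reff{a20}, to use $\operatorname{div}v=0$ so that every occurrence of $v$ carries one derivative to spare, and then to bound the resulting pieces either by an explicit kernel estimate or by weighted sums over dyadic frequencies, closing the argument in $\ell^{r}$ at the end; the time integration will be the very last step, since everything is estimated pointwise in $t$.

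First I would write $v\cdot\nabla f=v^{k}\partial_{k}f$ and apply Bony's decomposition to the product $v^{k}\cdot(\partial_{k}f)$; subtracting the analogous decomposition of $v^{k}\partial_{k}\dot{\Delta}_{q}f$ gives
$$[v\cdot\nabla;\dot{\Delta}_{q}]f=[T_{v^{k}};\dot{\Delta}_{q}]\partial_{k}f+\bigl(T_{\partial_{k}\dot{\Delta}_{q}f}v^{k}-\dot{\Delta}_{q}T_{\partial_{k}f}v^{k}\bigr)+\bigl(R(v^{k},\partial_{k}\dot{\Delta}_{q}f)-\dot{\Delta}_{q}R(v^{k},\partial_{k}f)\bigr)=:I_{q}+\cdots+V_{q},$$
where summation over $k$ is understood. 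In the last term I would use $\operatorname{div}v=0$ in the form $\sum_{k}R(v^{k},\partial_{k}f)=\operatorname{div}R(v,f)$, so that the extra $2^{q}$ produced by the divergence is absorbed by the Bernstein gain $\|\dot{\Delta}_{q'}v\|_{L^{\infty}}\lesssim2^{-q'}\|\dot{\Delta}_{q'}\nabla v\|_{L^{\infty}}$; the same gain converts each bare high-frequency copy of $v$ in $II_{q},III_{q},IV_{q}$ into $\nabla v$.

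For $I_{q}=\sum_{|q'-q|\le4}[\dot{S}_{q'-1}v^{k};\dot{\Delta}_{q}]\dot{\Delta}_{q'}\partial_{k}f$ I would write $\dot{\Delta}_{q}$ as convolution against $2^{3q}h(2^{q}\cdot)$ and apply the mean value inequality to $\dot{S}_{q'-1}v^{k}(y)-\dot{S}_{q'-1}v^{k}(x)$, together with the uniform $L^{\infty}$-boundedness of $\dot{S}_{q'-1}$; this produces the factor $2^{-q}\|\nabla v\|_{L^{\infty}}$, which against the $2^{q'}\sim2^{q}$ coming from $\partial_{k}$ yields $\|I_{q}\|_{L^{p}}\lesssim\|\nabla v\|_{L^{\infty}}\sum_{|q'-q|\le4}\|\dot{\Delta}_{q'}f\|_{L^{p}}$. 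The piece $IV_{q}$ is a frequency-localized remainder and $II_{q}$ a convergent high-frequency tail in $q'$; using Bernstein (Lemma \ref{Lemma-2.2}) and the gain above, both are bounded by $\|\nabla v\|_{L^{\infty}}\|\dot{\Delta}_{q}f\|_{L^{p}}$ up to an innocuous index shift. The pieces $III_{q}$ and $V_{q}$ reduce, after Bernstein, to weighted frequency sums of $\|\dot{\Delta}_{q'}\nabla v\|_{L^{\infty}}\,2^{q's}\|\dot{\Delta}_{q'}f\|_{L^{p}}$ (with $\widetilde{\dot{\Delta}}_{q'}f$ in place of $\dot{\Delta}_{q'}f$ for $V_{q}$) against geometric kernels in the frequency gap that are absolutely summable precisely when $-1<s<1$ and degenerate to a non-decaying tail at $s=1$ (for $III_{q}$) and at $s=-1$ (for $V_{q}$). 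Multiplying everything by $2^{qs}$, taking the $\ell^{r}$ norm in $q$, and integrating over $[0,t]$ then produces the three asserted inequalities.

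The main obstacle — and the reason the statement splits into cases (1), (2), (3) — is the borderline summation in $III_{q}$ at $s=1$ and in $V_{q}$ at $s=-1$. For $-1<s<1$ the two kernels above are summable, so one may bound $\|\dot{\Delta}_{q'}\nabla v\|_{L^{\infty}}$ by $\|\nabla v\|_{L^{\infty}}$, factor it out, and close in $\dot B^{s}_{p,r}$ with an $\ell^{r}$ sum. At $s=1$ the kernel of $III_{q}$ degenerates to a constant sequence, the naive estimate produces the divergent series $\sum_{q'}\|\nabla v\|_{L^{\infty}}$, and the only remedy is to keep the localized factors and invoke $\sum_{q'}\|\dot{\Delta}_{q'}\nabla v\|_{L^{\infty}}=\|\nabla v\|_{\dot B^{0}_{\infty,1}}$ together with $\sum_{j\le q'}2^{j}\|\dot{\Delta}_{j}f\|_{L^{p}}\le\|f\|_{\dot B^{1}_{p,1}}$, which is exactly why the outer norm there must be $\ell^{1}$ ($\sum_{q}$). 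Symmetrically, at $s=-1$ the tail sum in $V_{q}$ forces $\sum_{q''\ge q}\|\dot{\Delta}_{q''}\nabla v\|_{L^{\infty}}\le\|\nabla v\|_{\dot B^{0}_{\infty,1}}$ after absorbing the surviving $2^{-q''}$ by $\|\widetilde{\dot{\Delta}}_{q''}f\|_{L^{p}}\lesssim2^{q''}\|f\|_{\dot B^{-1}_{p,\infty}}$, and the double sum over $q$ and $q''$ is summable only in the $\ell^{\infty}$ (supremum) sense in $q$. Verifying that, with these exact choices of spaces, every residual series converges absolutely is the delicate bookkeeping the proof ultimately comes down to.
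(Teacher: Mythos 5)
Your sketch is correct and is essentially the standard argument for this commutator estimate: the paper does not prove the lemma but simply cites \cite{2011BCD}, and the proof there proceeds exactly as you describe — Bony decomposition of the commutator into the paraproduct commutator (handled by the kernel representation and the mean value theorem, giving the factor $2^{-q}\|\nabla v\|_{L^{\infty}}$), the two cross paraproducts and the localized remainder (handled by Bernstein), and the remainder term rewritten via $\operatorname{div}v=0$, with the borderline cases $s=\pm1$ requiring the localized factors $\|\dot{\Delta}_{q'}\nabla v\|_{L^{\infty}}$ summed in $\ell^{1}$, i.e.\ the $\dot{B}^{0}_{\infty,1}$ norm. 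The only point worth making explicit is that passing from the pointwise-in-$t$ bounds to the stated $L^{1}_{t}(L^{p})$ form inside the $\ell^{r}$ sum uses Minkowski's inequality ($r\geq1$); otherwise your case analysis of where the frequency sums degenerate matches the cited proof.
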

	\begin{Lemma}\label{Lemma-2.4} $(\cite{2013AGZ})$
		Let $p\in[2,6]$, $r\in[1,\infty]$ and if $p=6$, we have $r=1$. Assume that $a_{0}\in\dot{B}^{\frac{3}{2}}_{2,1}(\mathbb{R}^{3}) $ and $\nabla u\in L^{1}_{T}(\dot{B}^{\frac{3}{p}}_{p,r}\cap L^\infty)$ with $\operatorname{div}u=0,$ the function $a\in\mathcal{C}([0,T];\dot{B}^{\frac{3}{2}}_{2,1}(\mathbb{R}^{3}))$ solves
		\begin{align}\label{a24}
		\bigg\{\begin{array}{l}
		\partial_{t}a+u\cdot\nabla a=0,\quad (t,x)\in[0,T]\times\mathbb{R}^{3},\\
		a|_{t=0}=a_{0}.
		\end{array}\bigg.
		\end{align}
		Then there holds that for $\forall~t\in(0,T],$ 
		\begin{align}\label{a25}
		\|a(t)\|_{\dot{B}^{\frac{3}{2}}_{2,1}}\leq \|a_{0}\|_{\dot{B}^{\frac{3}{2}}_{2,1}} \exp\bigg\{C\|\nabla u\|_{L^{1}_{t}(\dot{B}^{\frac{3}{p}}_{p,r}\cap L^\infty)}\bigg\}.
		\end{align}
	\end{Lemma}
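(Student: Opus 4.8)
\emph{Plan of proof.} The idea is to run the standard Littlewood--Paley/Gr\"onwall argument for transport equations; the only delicate point is a commutator estimate at the critical regularity $s=\tfrac32$, and it is precisely there that the hypothesis $\nabla u\in L^1_T(\dot B^{3/p}_{p,r})$ with $p\in[2,6]$ is used. First I would apply $\dot\Delta_q$ to \eqref{a24} to get $\partial_t\dot\Delta_q a+u\cdot\nabla\dot\Delta_q a=-[\dot\Delta_q,u\cdot\nabla]a$, take the $L^2$ inner product with $\dot\Delta_q a$, and use $\operatorname{div}u=0$ to annihilate the convective term; this gives $\frac{d}{dt}\|\dot\Delta_q a\|_{L^2}\le\|[\dot\Delta_q,u\cdot\nabla]a\|_{L^2}$, whence, after integrating in time, multiplying by $2^{3q/2}$ and summing over $q\in\mathbb Z$,
\[
\|a(t)\|_{\dot B^{3/2}_{2,1}}\le\|a_0\|_{\dot B^{3/2}_{2,1}}+\int_0^t\sum_{q\in\mathbb Z}2^{3q/2}\bigl\|[\dot\Delta_q,u\cdot\nabla]a(\tau)\bigr\|_{L^2}\,d\tau .
\]
(For a fully rigorous argument one carries this out first for smooth solutions associated to mollified data and then passes to the limit using the assumed $\mathcal C([0,T];\dot B^{3/2}_{2,1})$ regularity, exactly as in \cite{2011BCD}.)

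The core of the proof is then the commutator estimate
\[
\sum_{q\in\mathbb Z}2^{3q/2}\bigl\|[\dot\Delta_q,u\cdot\nabla]a\bigr\|_{L^2}\lesssim\bigl(\|\nabla u\|_{L^\infty}+\|\nabla u\|_{\dot B^{3/p}_{p,r}}\bigr)\|a\|_{\dot B^{3/2}_{2,1}} .
\]
To prove it I would apply Bony's decomposition \eqref{a19}--\eqref{a20} to $u^k\partial_k a$ and to $u^k\partial_k\dot\Delta_q a$ and split $[\dot\Delta_q,u\cdot\nabla]a$ into (i) the paraproduct commutator $[\dot\Delta_q,T_{u^k}]\partial_k a$, (ii) the pieces in which $u$ carries the high frequency, namely $\dot\Delta_q(T_{\partial_k a}u^k)$ and $T_{\partial_k\dot\Delta_q a}u^k$, and (iii) the remainder pieces $\dot\Delta_q R(u^k,\partial_k a)$ and $R(u^k,\partial_k\dot\Delta_q a)$. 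Groups (i) and (iii) are routine: (i) is handled by the kernel--moment bound $\|[\dot\Delta_q,\dot S_{q'-1}u^k]\dot\Delta_{q'}g\|_{L^2}\lesssim 2^{-q}\|\nabla u\|_{L^\infty}\|\dot\Delta_{q'}g\|_{L^2}$ (summed over the finitely many $q'$ with $|q'-q|\le N_0$, $N_0$ a fixed integer coming from Lemma~\ref{Lemma-2.1}), and (iii) by H\"older with $L^\infty\times L^2$ together with $\|\dot\Delta_{q'}u\|_{L^\infty}\lesssim 2^{-q'}\|\nabla u\|_{L^\infty}$; in both cases $a\in\dot B^{3/2}_{2,1}$ provides a factor $2^{-3q/2}d_q$ with $(d_q)$ summable, so these groups contribute at most $\|\nabla u\|_{L^\infty}\|a\|_{\dot B^{3/2}_{2,1}}$ after summation in $q$.

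The one group that does not close with $\|\nabla u\|_{L^\infty}$ alone --- and which I expect to be the main obstacle --- is (ii), in which $\nabla a$ sits at low frequency and $u$ at high frequency. Writing $\dot\Delta_q(T_{\partial_k a}u^k)=\sum_{|q'-q|\le N_0}\sum_{l\le q'-2}\dot\Delta_q(\dot\Delta_l\partial_k a\,\dot\Delta_{q'}u^k)$, I would estimate each summand by H\"older with $\frac12=\frac1p+\frac1{p^{\ast}}$ --- the conjugate exponent $p^{\ast}$ being finite precisely because $p\ge2$ --- then Bernstein from $L^2$ to $L^{p^{\ast}}$ on $\dot\Delta_l\partial_k a$ (which costs $2^{3l/p}$) and Bernstein on $\dot\Delta_{q'}u^k$ to replace $u$ by $\nabla u$. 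Collecting the powers of $2$, the low-frequency factor carries the weight $2^{l(3/p-1/2)}$ in front of the $\dot B^{3/2}_{2,1}$-coefficient of $a$; this is summable over $l\le q'-2$ exactly when $p\le6$ (at the endpoint $p=6$ the weight is $1$, so one uses $\sum_l d_l\le1$ instead, and then summability in $q$ must come entirely from the $\dot B^{3/p}_{p,r}$-coefficient of $\nabla u$, which forces $r=1$ there). The surviving $q'$-sum then produces $2^{-3q/2}$ times a sequence that is $\ell^1$ in $q$, giving, after summation, $\lesssim\|\nabla u\|_{\dot B^{3/p}_{p,r}}\|a\|_{\dot B^{3/2}_{2,1}}$; the term $T_{\partial_k\dot\Delta_q a}u^k$ is handled the same way, now summing over $q'\ge q+2$ and exploiting the geometric decay in $q'$.

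Finally, substituting the commutator estimate into the first displayed inequality gives
\[
\|a(t)\|_{\dot B^{3/2}_{2,1}}\le\|a_0\|_{\dot B^{3/2}_{2,1}}+C\int_0^t\bigl(\|\nabla u(\tau)\|_{L^\infty}+\|\nabla u(\tau)\|_{\dot B^{3/p}_{p,r}}\bigr)\|a(\tau)\|_{\dot B^{3/2}_{2,1}}\,d\tau,
\]
and Gr\"onwall's lemma yields \eqref{a25}. In summary, the argument is standard except for the borderline commutator piece (ii), where one must absorb the mismatch between the $L^2$-based space in which $a$ is measured and the $L^p$-based space of $\nabla u$; it is exactly the range $2\le p\le6$ (with $r=1$ when $p=6$) that renders the low-frequency summation in that piece convergent.
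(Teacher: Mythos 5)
The paper never proves Lemma \ref{Lemma-2.4} — it is quoted from \cite{2013AGZ} with the proof referred to \cite{2011BCD} — so there is no in-paper argument to compare against; your proposal simply reconstructs the standard proof from those references (dyadic localization and $L^2$ energy estimate using $\operatorname{div}u=0$, Bony splitting of $[\dot\Delta_q,u\cdot\nabla]a$, Gr\"onwall). Your argument is essentially correct, and you correctly identify the paraproduct piece $T_{\partial_k a}u$ with $u$ at high frequency as the only place where $p\in[2,6]$ (and $r=1$ at $p=6$) is needed; the one point to state carefully when writing it up is that for $p<6$ and $r>1$ the $\ell^1$ summability in $q$ must be retained from the Besov coefficients of $a$ through the discrete convolution (Young's inequality in $\ell^1*\ell^1$) rather than by crudely bounding the $l$-sum by a geometric series times $\|a\|_{\dot B^{3/2}_{2,1}}$, which would leave only an $\ell^r$ sequence in $q$ — your outline is consistent with this, but the sentence ``produces $2^{-3q/2}$ times a sequence that is $\ell^1$ in $q$'' deserves that one line of justification.
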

	
	\vskip 0.5cm
	
	\subsection{Regularity results}
	\par The following regularity results on the Stokes system will be useful for our derivation of higher order \emph{a priori} estimates:
	
	\begin{Lemma}\label{Lemma-5.1}
		For positive constants $\underline{\mu}, \bar{\mu},$ and $q\in(3,\infty),$ in addition, assume that $\mu(\rho)$ satisfies 
		\begin{equation}\label{d22}
		\nabla\mu(\rho)\in L^{q},\quad 0<\underline{\mu}\leq\mu(\rho)\leq\bar{\mu}<\infty.
		\end{equation}
		Then, if $F\in L^{r}$ with $r\in[\frac{2q}{q+2},q],$ there exists some positive $C$ depending only on $\underline{\mu},\bar{\mu},q$ and $r$ such that the unique weak solution $(u,\pi)\in H^{1}\times L^{2}$ to the Cauchy problem
		\begin{equation}\label{d23}
		\left\{\begin{array}{l}
		-\operatorname{div} (2\mu(\rho)\mathbb{D}u)+\nabla\pi=F,\ \ \ \ \ \ \ \ \ \ \ \ \  (t,x)\in\mathbb{R}_{+}\times\mathbb{R}^{3}, \\
		\operatorname{div} u=0, \ \ \ \ \ \ \ \ \ \ \ \ \ \ \ \ \ \ \  \ \ \ \ \ \ \ \ \ \ \ \ \ \ \ (t,x)\in\mathbb{R}_{+}\times\mathbb{R}^{3},\\
		u(x)\longrightarrow 0,\ \ \ \ \ \ \ \ \ \  \ \ \ \ \  \ \ \ \ \ \ \ \ \ \ \ \ \ \ \ \ \ \ |x|\longrightarrow 0
		\end{array}\right.
		\end{equation}	
		satisfies
		\begin{equation}\label{d24}
		\|\nabla^{2}u\|_{L^{2}}\leq \|F\|_{L^{2}}+\|\nabla \mu(\rho)\|^{\frac{q}{q-3}}_{L^{q}}\|\nabla u\|_{L^{2}},
		\end{equation}
		and
		\begin{equation}\label{d25}
		\|\nabla^{2} u\|_{L^{r}}\lesssim\|F\|_{L^{r}}+\|\nabla\mu(\rho)\|^{\frac{q(5r-6)}{2r(q-3)}}_{L^{q}}\{\|\nabla u\|_{L^{2}}+\|(-\Delta)^{-1}\operatorname{div}F\|_{L^{2}}\}.
		\end{equation}
	\end{Lemma}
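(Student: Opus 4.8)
\textbf{Proof proposal for Lemma \ref{Lemma-5.1}.}
The plan is to treat the system \eqref{d23} as a perturbation of the constant-coefficient Stokes system and then bootstrap the elliptic regularity. First I would rewrite the momentum equation as $-\underline{\mu}\Delta u + \nabla\pi = F + \operatorname{div}\bigl(2(\mu(\rho)-\underline{\mu})\mathbb{D}u\bigr)$, or more conveniently write it in the form $-\Delta u + \nabla(\pi/\mu(\rho)) = \tilde F$ where $\tilde F$ collects $F/\mu(\rho)$ together with the commutator-type terms $\nabla\mu(\rho)\cdot\mathbb{D}u$ (using $\operatorname{div}u=0$ and that $\mathbb{D}u$ has the same $L^2$ size as $\nabla u$). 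The classical Stokes (Cattabriga/Agmon–Douglis–Nirenberg) estimate in $\mathbb{R}^3$ then gives $\|\nabla^2 u\|_{L^2}\lesssim \|\tilde F\|_{L^2}$, so the whole game reduces to estimating $\|\nabla\mu(\rho)\,\mathbb{D}u\|_{L^2}$ by interpolation.

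The key interpolation step is this: with $\nabla\mu(\rho)\in L^q$, $q\in(3,\infty)$, and $\nabla u\in L^2$ from the energy estimate, Hölder gives $\|\nabla\mu(\rho)\,\mathbb{D}u\|_{L^2}\le \|\nabla\mu(\rho)\|_{L^q}\|\nabla u\|_{L^{2q/(q-2)}}$, and then Gagliardo–Nirenberg interpolates $\|\nabla u\|_{L^{2q/(q-2)}}\lesssim \|\nabla u\|_{L^2}^{1-\theta}\|\nabla^2 u\|_{L^2}^{\theta}$ with $\theta = 3/q$ (the exponent $2q/(q-2)$ sitting between $2$ and $6$ exactly because $q>3$). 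Feeding this back into the Stokes bound gives $\|\nabla^2 u\|_{L^2}\lesssim \|F\|_{L^2} + \|\nabla\mu(\rho)\|_{L^q}\|\nabla u\|_{L^2}^{1-3/q}\|\nabla^2 u\|_{L^2}^{3/q}$, and since $3/q<1$ Young's inequality absorbs the $\|\nabla^2 u\|_{L^2}$ term on the left, producing the power $\frac{q}{q-3}$ on $\|\nabla\mu(\rho)\|_{L^q}$ that appears in \eqref{d24}. (The placement of the lower and upper viscosity bounds $\underline\mu,\bar\mu$ enters only through the constants, via $\|\tilde F\|_{L^2}\sim \underline\mu^{-1}\|F\|_{L^2}$ and $\bar\mu/\underline\mu$ factors.)

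For the $L^r$ estimate \eqref{d25} I would run the same scheme but with the $L^r$ Stokes estimate: $\|\nabla^2 u\|_{L^r}\lesssim \|F\|_{L^r} + \|\nabla\mu(\rho)\,\mathbb{D}u\|_{L^r}$, then Hölder $\|\nabla\mu(\rho)\,\mathbb{D}u\|_{L^r}\le\|\nabla\mu(\rho)\|_{L^q}\|\nabla u\|_{L^{rq/(q-r)}}$, and Gagliardo–Nirenberg to interpolate $\|\nabla u\|_{L^{rq/(q-r)}}$ between $\|\nabla u\|_{L^2}$ (which by Sobolev controls, and is controlled by, the pressure-free part $\|(-\Delta)^{-1}\operatorname{div}F\|_{L^2}$ plus the solenoidal energy) and $\|\nabla^2 u\|_{L^r}$; matching the scaling dimensions is what forces the exponent $\frac{q(5r-6)}{2r(q-3)}$ on $\|\nabla\mu(\rho)\|_{L^q}$. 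The constraint $r\in[\tfrac{2q}{q+2},q]$ is exactly the range in which the interpolation exponents are admissible (the lower endpoint makes $\theta\le 1$, i.e. $rq/(q-r)\le$ the relevant Sobolev exponent of $\nabla^2 u\in L^r$), and again Young's inequality absorbs $\|\nabla^2 u\|_{L^r}$.

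The main obstacle I anticipate is purely bookkeeping rather than conceptual: getting the Gagliardo–Nirenberg exponents to line up so that the power of $\|\nabla\mu(\rho)\|_{L^q}$ comes out exactly as stated, and checking that the absorption by Young's inequality is legitimate over the whole range $r\in[\tfrac{2q}{q+2},q]$ (one must verify the interpolation parameter $\theta$ satisfies $\theta<1$ strictly, which again uses $q>3$). A secondary point requiring care is justifying the base $L^r$-Stokes estimate on the full space $\mathbb{R}^3$ together with the identity relating $\|\nabla u\|_{L^2}$ and $\|(-\Delta)^{-1}\operatorname{div}F\|_{L^2}$, for which one uses the Helmholtz decomposition of $F$ and the variational characterization of the weak solution $(u,\pi)$.
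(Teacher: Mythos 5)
Your overall scheme for (\ref{d25}) — divide by $\mu(\rho)$, view the system as a constant-coefficient Stokes problem with modified pressure $\pi/\mu(\rho)$, then H\"older, Gagliardo--Nirenberg and Young — is indeed the paper's route, and your exponent bookkeeping ($\theta=3/q$ giving $\frac{q}{q-3}$, and $\alpha=\frac{2r}{5r-6}\cdot\frac{q-3}{q}$ giving $\frac{q(5r-6)}{2r(q-3)}$) is consistent with it. But there is a genuine gap: when you rewrite $\nabla\pi/\mu(\rho)=\nabla(\pi/\mu(\rho))+\pi\nabla\mu(\rho)/\mu(\rho)^{2}$, the right-hand side of your Stokes system necessarily contains the zeroth-order pressure term $\pi\nabla\mu(\rho)/\mu(\rho)^{2}$, which you drop in both the $L^{2}$ and $L^{r}$ steps ("the whole game reduces to estimating $\|\nabla\mu(\rho)\mathbb{D}u\|$"). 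Handling this term is the crux of the lemma: the paper first derives $\pi=-(-\Delta)^{-1}\operatorname{div}F-(-\Delta)^{-1}\operatorname{div}\operatorname{div}(2\mu(\rho)\mathbb{D}u)$, hence $\|\pi\|_{L^{2}}\lesssim\|(-\Delta)^{-1}\operatorname{div}F\|_{L^{2}}+\|\nabla u\|_{L^{2}}$ (see (\ref{dd24})), then interpolates $\|\pi\nabla\mu(\rho)\|_{L^{r}}\lesssim\|\pi\|_{L^{2}}^{\alpha}\|\nabla(\pi/\mu(\rho))\|_{L^{r}}^{1-\alpha}\|\nabla\mu(\rho)\|_{L^{q}}$ and uses that the $L^{r}$ Stokes estimate controls $\|\nabla(\pi/\mu(\rho))\|_{L^{r}}$ on the left so that Young can absorb it. This is exactly where the term $\|(-\Delta)^{-1}\operatorname{div}F\|_{L^{2}}$ in (\ref{d25}) comes from; your parenthetical explanation (that $\|\nabla u\|_{L^{2}}$ "is controlled by" $\|(-\Delta)^{-1}\operatorname{div}F\|_{L^{2}}$) is not correct — that quantity controls the gradient part of $F$, i.e.\ the pressure, not the velocity.

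Two further points. First, your alternative rewriting $-\underline{\mu}\Delta u+\nabla\pi=F+\operatorname{div}(2(\mu(\rho)-\underline{\mu})\mathbb{D}u)$ cannot work here: expanding the divergence produces $(\mu(\rho)-\underline{\mu})\Delta u$, whose $L^{2}$ (or $L^{r}$) norm is only bounded by $(\bar{\mu}-\underline{\mu})\|\nabla^{2}u\|$ and cannot be absorbed without a smallness/closeness assumption on the oscillation of $\mu$ — precisely the hypothesis this paper is designed to remove; only $\nabla\mu(\rho)\in L^{q}$ is available. Second, for the $L^{2}$ bound (\ref{d24}) the paper deliberately avoids the pressure altogether: it applies $\dot{\Delta}_{j}\mathbb{P}$ to $\mu(\rho)\Delta u=-2\mathbb{D}u\cdot\nabla\mu(\rho)+\nabla\pi-F$ (so $\mathbb{P}\nabla\pi=0$) and controls the variable coefficient through the paraproduct commutator $[\mu(\rho);\dot{\Delta}_{j}\mathbb{P}]\Delta u$, estimated by $c_{j}\|\nabla u\|_{L^{2}}^{1-3/q}\|\nabla^{2}u\|_{L^{2}}^{3/q}\|\nabla\mu(\rho)\|_{L^{q}}$; if instead you run your modified-pressure Stokes argument at the $L^{2}$ level you pick up the $\pi\nabla\mu(\rho)$ term and hence an extra $\|(-\Delta)^{-1}\operatorname{div}F\|_{L^{2}}$ on the right, i.e.\ a strictly weaker statement than (\ref{d24}). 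So the proposal needs both the pressure term reinstated (with the accompanying $\|\pi\|_{L^{2}}$ estimate and simultaneous control of $\|\nabla(\pi/\mu(\rho))\|_{L^{r}}$) and a pressure-free mechanism, such as the commutator argument, for the $L^{2}$ part.
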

	
	\begin{proof}
		The first equation of (\ref{d23}) can be re-written as
		\begin{equation}\label{d27}
		\mu(\rho)\Delta u= -2\mathbb{D} u\cdot\nabla\mu(\rho)+\nabla\pi-F.
		\end{equation}
		Applying $\dot{\Delta}_{j} \mathbb{P}$ to (\ref{d27}) and using a standard commutator process gives
		$$
		\dot{\Delta}_{j}\Delta u=\frac{[\mu(\rho);\dot{\Delta}_{j}\mathbb{P}]\Delta u}{\mu(\rho)}-\frac{\dot{\Delta}_{j}\mathbb{P}\{2\mathbb{D} u\cdot\nabla \mu(\rho)\}}{\mu(\rho)}-\frac{\dot{\Delta}_{j}\mathbb{P}F}{\mu(\rho)}.
		$$ 
		By virtue of $0<\underline{\mu}\leq\mu(\rho)$, it is easy to get
		\begin{equation}\label{d28}
		\begin{aligned}
		\|\dot{\Delta}_{j}\nabla^{2} u\|_{L^{2}}\lesssim \|[\mu(\rho);\dot{\Delta}_{j}\mathbb{P}]\Delta u\|_{L^{2}}+\|\dot{\Delta}_{j}\{2\mathbb{D} u\cdot\nabla \mu(\rho)\}\|_{L^{2}}+\|\dot{\Delta}_{j}F\|_{L^{2}}.
		\end{aligned}
		\end{equation}	
		Using Bony's decomposition, one has
		$$
		[\mu(\rho);\dot{\Delta}_{j}\mathbb{P}]\Delta u= [T_{\mu(\rho)};\dot{\Delta}_{j}\mathbb{P}]\Delta u+T^{'}_{\dot{\Delta}_{j} \Delta u}\mu(\rho)-\dot{\Delta}_{j}\mathbb{P}(T_{\Delta u}\mu(\rho))-\dot{\Delta}_{j}\mathbb{P}\mathcal{R}(\mu(\rho),\Delta u).
		$$
		It follows again from the above estimate,~which implies that
		$$
		\begin{aligned}
		\|[T_{\mu(\rho)};\dot{\Delta}_{j}\mathbb{P}]\Delta u\|_{L^{2}}
		&\lesssim\sum_{|j-k|\leq 4}2^{-j}\|\nabla\dot{S}_{k-1}\mu(\rho)\|_{L^{q}}\|\dot{\Delta}_{k}\Delta u\|_{L^{q^{*}}}\\
		& \lesssim c_{j}\|\nabla \mu(\rho)\|_{L^{q}}\|\nabla u\|_{\dot{B}^{0}_{q^{*},2}},
		\end{aligned}
		$$
		where $\frac{1}{q}+\frac{1}{q^{*}}=\frac{1}{2}.$ Notice that
		$$
		\begin{aligned}
		\|T^{'}_{\dot{\Delta}_{j}\Delta u}\mu(\rho)\|_{L^{2}}&\lesssim\sum_{k\geq j-2}\|\dot{S}_{k+2}\dot{\Delta}_{j} \Delta u\|_{L^{q^{*}}}\|\dot{\Delta}_{k} \mu(\rho)\|_{L^{q}}\\&
		\lesssim \|\dot{\Delta}_{j} \nabla u\|_{L^{q^{*}}}\sum_{k\geq j-2}2^{j-k}2^{k}\|\dot{\Delta}_{k}\mu(\rho)\|_{L^{q}}\\&
		\lesssim c_{j}\|\nabla u\|_{\dot{B}^{0}_{q^{*},2}}\|\nabla\mu(\rho)\|_{L^{q}}.
		\end{aligned}
		$$
		Similarly, we have 
		$$
		\begin{aligned}
		\|\dot{\Delta}_{j}\mathbb{P}(T_{\Delta u} \mu(\rho))\|_{L^{2}}&\lesssim\sum_{|k-j|\leq 4}\|\dot{S}_{k-1}\Delta u\|_{L^{q^{*}}}\|\dot{\Delta}_{k}\mu(\rho)\|_{L^{q}}\\&
		\lesssim \sum_{|k-j|\leq 4}2^{-k}\|\dot{S}_{k-1}\Delta u\|_{L^{q^{*}}}2^{k}\|\dot{\Delta}_{k}\mu(\rho)\|_{L^{q}}\\&
		\lesssim c_{j}\|\nabla u\|_{\dot{B}^{0}_{q^{*},2}}\|\nabla\mu(\rho)\|_{L^{q}}.
		\end{aligned}
		$$
		The same estimate holds for $\dot{\Delta}_{j}\mathbb{P}\mathcal{R}(\mu(\rho),\Delta u),$ applying Lemma \ref{Lemma-2.2} gives
		$$
		\begin{aligned}
		\|\dot{\Delta}_{j}\mathbb{P}\mathcal{R}(\mu(\rho),\Delta u)\|_{L^{2}}&\lesssim\sum_{k\geq j-2}2^{\frac{3}{q}j}\|\dot{\Delta}_{j}\mathbb{P}\mathcal{R}(\mu(\rho),\Delta u)\|_{L^{\frac{2q}{q+2}}}\\&
		\lesssim\sum_{k\geq j-2}2^{(j-k)\frac{3}{q}}2^{(\frac{3}{q}-1)k}\|\dot{\Delta}_{k}\Delta u\|_{L^{2}}\|\dot{\Delta}_{k}\nabla \mu(\rho)\|_{L^{q}}\\&
		\lesssim c_{j}\| \nabla u\|_{\dot{B}^{\frac{3}{q}}_{2,2}}\|\nabla\mu(\rho)\|_{L^{q}}.
		\end{aligned}
		$$
		By virtue of the Gagliardo-Nirenberg inequality $\|\nabla u\|_{\dot{B}^{\frac{3}{q}}_{2,2}}\lesssim\|\nabla u\|^{1-\frac{3}{q}}_{L^{2}}$ $\|\nabla^{2}u\|^{\frac{3}{q}}_{L^{2}}$ and $\dot{B}^{\frac{3}{q}}_{2,2}\hookrightarrow \dot{B}^{0}_{q^{*},2}\hookrightarrow L^{q^{*}}$ with $q^{*}\geq 2$, one has
		\begin{align}\label{d29}
		\|[\mu(\rho),\dot{\Delta}_{j}\mathbb{P}]\Delta u\|_{L^{2}}\lesssim c_{j}\|\nabla u\|^{1-\frac{3}{q}}_{L^{2}}\|\nabla^{2}u\|^{\frac{3}{q}}_{L^{2}}\|\nabla\mu(\rho)\|_{L^{q}}.
		\end{align}
		The same estimate hold for $\mathbb{D} u\cdot\nabla \mu(\rho).$ Substituting (\ref{d29}) into (\ref{d28}) and by Young's inequality, we obtain
		\begin{equation}\label{d30}
		\begin{aligned}
		\|\nabla^{2}u\|_{L^{2}}&\leq C(\sum_{j\in\mathbb{Z}}\|[\mu(\rho);\dot{\Delta}_{j}\mathbb{P}]\Delta u\|^{2}_{L^{2}})^{\frac{1}{2}}+C\|\mathbb{D} u\cdot\nabla \mu(\rho)\|_{L^{2}}+C\|F\|_{L^{2}}\\
		&\leq C\|\nabla u\|^{1-\frac{3}{q}}_{L^{2}}\|\nabla^{2}u\|^{\frac{3}{q}}_{L^{2}}\|\nabla\mu(\rho)\|_{L^{q}}+C\|F\|_{L^{2}}\\&
		\leq \frac{1}{2}\|\nabla^{2}u\|_{L^{2}}+C\|\nabla u\|_{L^{2}}\|\nabla\mu(\rho)\|^{\frac{q}{q-3}}_{L^{q}}+C\|F\|_{L^{2}}.
		\end{aligned}
		\end{equation}
		\par Futhermore, it follows from $(\ref{d23})_{1}$ that
		\begin{equation}\label{d31}
		\pi=-(-\Delta)^{-1}\operatorname{div}F-(-\Delta)^{-1}\operatorname{div}\operatorname{div}(2\mu(\rho)\mathbb{D}u),
		\end{equation}
		which gives
		\begin{equation}\label{dd24}
		\|\pi\|_{L^{2}}\lesssim \|(-\Delta)^{-1}\operatorname{div}F\|_{L^{2}}+\|\nabla u\|_{L^{2}}.
		\end{equation}
		Then we rewrite (\ref{d27}) as
		\begin{equation}\label{d32}
		-\Delta u+\nabla (\frac{\pi}{\mu(\rho)})=\frac{F}{\mu(\rho)}+\frac{2\mathbb{D}u\cdot\nabla \mu(\rho)}{\mu(\rho)}-\frac{\pi\nabla\mu(\rho)}{\mu(\rho)^{2}}.
		\end{equation}
		Applying the standard $L^{r}$-estimates to the Stokes system (\ref{d32}) yields that, for 
		\begin{equation}
		\begin{aligned}
		\|\nabla^{2} u\|_{L^{r}}+\left\|\nabla (\frac{\pi}{\mu(\rho)}) \right\|_{L^{r}}
		\leq C\left\|\frac{F}{\mu(\rho)}\right\|_{L^{r}}+C\left\|\frac{2\mathbb{D}u\cdot\nabla \mu(\rho)}{\mu(\rho)}\right\|_{L^{r}}+C\left\|\frac{\pi\nabla\mu(\rho)}{\mu(\rho)^{2}}\right\|_{L^{r}},
		\end{aligned}
		\end{equation}
		from which, we deduce that
		\begin{equation}\label{d33}
		\begin{aligned}
		\|\nabla^{2} u\|_{L^{r}}+\left\|\nabla (\frac{\pi}{\mu(\rho)}) \right\|_{L^{r}}
		\leq& C\|F\|_{L^{r}}+C\|2\mathbb{D}u\cdot\nabla \mu(\rho)\|_{L^{r}}+C\|\pi\nabla\mu(\rho)\|_{L^{r}}\\
		\leq& C\|F\|_{L^{r}}+C\|\nabla u\|^{\alpha}_{L^{2}}\|\nabla^{2}u\|^{1-\alpha}_{L^{r}}\|\nabla \mu(\rho)\|_{L^{q}}\\ &+C\|\pi\|^{\alpha}_{L^{2}}\left\|\nabla(\frac{\pi}{\mu(\rho)})\right\|^{1-\alpha}_{L^{r}}\|\nabla\mu(\rho)\|_{L^{q}},
		\end{aligned}
		\end{equation}
		where $\alpha$ satisfies
		$$\frac{1}{r}-\frac{1}{q}=\frac{\alpha}{2}+(1-\alpha)(\frac{1}{r}-\frac{1}{3}), \quad {\rm or}~ \alpha=\frac{2r}{5r-6}\cdot\frac{q-3}{q}.$$
		By Young's inequality and (\ref{dd24}),
		\begin{equation}\label{d34}
		\|\nabla^{2} u\|_{L^{r}}\lesssim\|F\|_{L^{r}}+\|\nabla\mu(\rho)\|^{\frac{q(5r-6)}{2r(q-3)}}_{L^{q}}\{\|\nabla u\|_{L^{2}}+\|(-\Delta)^{-1}\operatorname{div}F\|_{L^{2}}\}.
		\end{equation}
		The proof of Lemma \ref{Lemma-5.1} is finished.
	\end{proof}
	
	\section{Local well-posedness of (\ref{a1})}
	If the density $\rho_0$ is away from zero, we denote by $a\stackrel{\text{def}}{=}\rho^{-1}-1$ and $\widetilde{\mu}(a)\stackrel{\text{def}}{=}\mu(\rho)$, then the system (\ref{a1}) can be equivalently reformulated as
	\begin{equation} \label{a2}
	\left\{\begin{array}{l}
	\partial_{t}a+u\cdot\nabla a=0,\quad (t,x)\in \mathbb{R}^{+}\times\mathbb{R}^{3},\\
	\partial_{t}u+u\cdot\nabla u-{\rm  div }(2(1+b)\mathbb{D} u)+(1+a)\nabla\pi=-\nabla\lambda~\mathbb{D}u,\\
	\operatorname{div} u=0,\\
	(a,u)|_{t=0}=(a_{0},u_{0}).
	\end{array}\right.
	\end{equation}
	For notational simiplicity, we denote by
	\begin{equation} \label{a3}
	b \stackrel{\text { def }}{=}(1+a) \tilde{\mu} (a)-1 \quad \text { and } \quad \lambda \stackrel{\text { def }}{=}2 \int_0^{a} \tilde{\mu}(s)ds.
	\end{equation} 
	\par We start with the local existence of strong solutions which has been proved in Theorem 3.1 of \cite{2022q}:
	\begin{Lemma}\label{Lemma-2} $(\cite{2022q})$
		Let $a_0 \in B_{2, 1}^{\frac{3}{2}}\left(\mathbb{R}^3\right)$ and $ u_0 \in \dot{B}_{2, 1}^{\frac{1}{2}}\left(\mathbb{R}^3\right)$ with $\operatorname{div} u_0=0$. There exits a small constant $\varepsilon_{0}$ depending on $\|a_0\|_{B_{2, 1}^{\frac{3}{2}}}$ so that if
		\begin{equation}\label{aa12}
		\left\|u_0\right\|_{\dot{B}_{2, 1}^{\frac{1}{2}}} \leq \varepsilon_{0},
		\end{equation}
		then there is a positive time $T^*>1$ such that (\ref{a2}) has a unique local-in-time solution $a\in \mathcal{C}_b([0, T^*] ;$ $ B_{2, 1}^{\frac{3}{2}}(\mathbb{R}^3))$, $ u\in\mathcal{C}_b([0, T^*] ; \dot{B}_{2, 1}^{\frac{1}{2}}(\mathbb{R}^3))$ $\cap $ $L^1([0, T^*] ;$ $ \dot{B}_{2, 1}^{\frac{5}{2}}(\mathbb{R}^3))$
		and $u_t,\nabla\pi\in L^1([0, T^*] ; \dot{B}_{2,1}^{\frac{1}{2}}(\mathbb{R}^3)).$
	\end{Lemma}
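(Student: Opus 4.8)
The plan is to prove Lemma~\ref{Lemma-2} in the classical way: construct approximate solutions, derive uniform \emph{a priori} estimates on a time interval whose length exceeds $1$ once $\|u_0\|_{\dot B^{\frac12}_{2,1}}$ is small, pass to the limit, and verify uniqueness by a stability estimate. First I would regularize the data by $a_0^n=\dot{S}_n a_0$, $u_0^n=\dot{S}_n u_0$ and run an iteration: given $(a^n,u^n)$, let $a^{n+1}$ solve the transport equation $\partial_t a^{n+1}+u^n\cdot\nabla a^{n+1}=0$ with datum $a_0^n$, form $b^{n+1},\lambda^{n+1}$ from $a^{n+1}$ through~(\ref{a3}), and then let $(u^{n+1},\pi^{n+1})$ solve the \emph{linear} generalized Stokes system coming from~(\ref{a2}) with these frozen coefficients and drift $u^n$, recovering $\pi^{n+1}$ at each step from an elliptic problem whose coefficient $1+a^{n+1}$ is bounded away from $0$ (so the solvability in $\dot B^{\frac12}_{2,1}$ is standard).

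The heart of the argument is the \emph{a priori} estimate for the momentum equation, done directly at the level of the $\dot\Delta_j$-localized $L^2$ balance. The key point is that the density-dependent coefficient $1+b=(1+a)\tilde\mu(a)$ must be kept \emph{inside} the viscous term rather than moved to the right-hand side, since $b$ is not small (only $u_0$ is): after applying $\dot\Delta_j$, testing against $\dot\Delta_j u$, integrating by parts and symmetrizing, the viscous contribution is $\int 2(1+b)\,|\mathbb D\dot\Delta_j u|^2\,dx$, which by Korn's identity for divergence-free fields and Bernstein's inequality is bounded below by $c\,2^{2j}\|\dot\Delta_j u\|_{L^2}^2$ with $c>0$ depending only on $\underline\mu,\bar\mu$ and the $L^\infty$-bounds on $\rho^{\pm1}$. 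Every other term — the commutators $[\dot\Delta_j,(1+b)\mathbb D\,]\mathbb D u$ and $[\dot\Delta_j,u^n\!\cdot\!\nabla]u$, and the products $\dot\Delta_j\big((1+a)\nabla\pi\big)$ and $\dot\Delta_j\big(\nabla\lambda\cdot\mathbb D u\big)$ — carries one derivative less on $u$ and is controlled by the commutator estimates of Lemma~\ref{Lemma-2.3}, the product laws in $B^{\frac32}_{2,1}$ (an algebra, embedded in $L^\infty$) and Young's inequality, at the cost of a Gronwall factor involving $\int_0^t(\|\nabla u\|_{\dot B^{\frac32}_{2,1}}+\|b\|_{B^{\frac32}_{2,1}}^{\sigma}+\|\lambda\|_{B^{\frac32}_{2,1}}^{\sigma})\,ds$.

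Summing over $j$ in $\ell^1$ with the weights $2^{j/2}$ and combining with the transport bound $\|a(t)\|_{B^{\frac32}_{2,1}}\le\|a_0\|_{B^{\frac32}_{2,1}}\exp\{C\|u\|_{L^1_t(\dot B^{\frac52}_{2,1})}\}$ (and its analogue for $b,\lambda$, obtained from the composition estimate since $\mu\in W^{3,\infty}$ and $b,\lambda$ vanish at $a=0$), I would obtain on $[0,T]$ a closed inequality
\[
X(T):=\|u\|_{\widetilde{L}^\infty_T(\dot B^{\frac12}_{2,1})}+\|u\|_{\widetilde{L}^1_T(\dot B^{\frac52}_{2,1})}\ \le\ C_0\big(\|u_0\|_{\dot B^{\frac12}_{2,1}}+X(T)^2\big),
\]
where $C_0=C_0(\underline\mu,\bar\mu,\|a_0\|_{B^{\frac32}_{2,1}})$ absorbs, for $T\le1$, the Gronwall factors generated above — these stay finite because $b,\lambda$ remain comparable to their initial values as long as $\|u\|_{L^1_T(\dot B^{\frac52}_{2,1})}$ is controlled. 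A standard bootstrap then shows that if $\|u_0\|_{\dot B^{\frac12}_{2,1}}\le\varepsilon_0$ with $\varepsilon_0=\varepsilon_0(\|a_0\|_{B^{\frac32}_{2,1}})$ small enough, one has $X(1)\le 2C_0\|u_0\|_{\dot B^{\frac12}_{2,1}}$ and $\|a\|_{\widetilde L^\infty_1(B^{\frac32}_{2,1})}\le 2\|a_0\|_{B^{\frac32}_{2,1}}$, hence the solution persists beyond some $T^*>1$, and the bounds on $u_t$ and $\nabla\pi$ in $L^1_T(\dot B^{\frac12}_{2,1})$ follow from the equation.

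It remains to show convergence of the scheme and uniqueness. I would estimate the differences $\delta a^n=a^{n+1}-a^n$, $\delta u^n=u^{n+1}-u^n$, which solve the linearized system with zero data and sources bilinear in the previous differences; because of the transport structure one loses a derivative, so I would work in $\widetilde L^\infty_T(\dot B^{\frac12}_{2,1})$ for $\delta a^n$ and $\widetilde L^\infty_T(\dot B^{-\frac12}_{2,1})\cap\widetilde L^1_T(\dot B^{\frac32}_{2,1})$ for $\delta u^n$, obtaining a contraction for $T$ small. Passage to the limit yields a solution with the stated regularity, time-continuity coming from the equations in the usual way, and uniqueness follows by applying the same difference estimate to two solutions with common data. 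The main obstacle throughout is the \emph{non-smallness} of the variable viscosity $b$: it rules out the use of constant-coefficient heat-semigroup estimates and forces the $L^2$-based localized energy method (hence the restriction $p=2$), and it requires keeping careful track of how the constants depend on $\|a_0\|_{B^{\frac32}_{2,1}}$ so that shrinking $\varepsilon_0$ alone can push the lifespan past $1$.
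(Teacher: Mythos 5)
There is a genuine gap at the heart of your a priori estimate, namely the claim that after keeping $1+b$ inside the viscous term, the remaining commutator/paraproduct terms ``carry one derivative less on $u$'' and only generate Gronwall factors in $\|b\|_{B^{3/2}_{2,1}}$. Write $\dot\Delta_j\operatorname{div}(2(1+b)\mathbb{D}u)=\operatorname{div}(2(1+b)\mathbb{D}\dot\Delta_j u)+\operatorname{div}(2[\dot\Delta_j,b]\mathbb{D}u)$: the first piece indeed gives the coercive term, but in $[\dot\Delta_j,b]\mathbb{D}u$ only the paraproduct piece $[\dot\Delta_j,T_b]\mathbb{D}u$ enjoys a commutator gain, and that gain costs one derivative of $b$ in $L^\infty$, which is \emph{not} available for $b\in B^{3/2}_{2,1}$ (one only has $\nabla b\in B^{1/2}_{2,1}$); the pieces $T_{\mathbb{D}u}b$ and $R(b,\mathbb{D}u)$ are genuinely critical products, e.g. $\|\operatorname{div}(T_{\mathbb{D}u}b+R(b,\mathbb{D}u))\|_{\dot B^{1/2}_{2,1}}\lesssim\|b\|_{\dot B^{3/2}_{2,1}}\|u\|_{\dot B^{5/2}_{2,1}}$. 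This is of exactly the same order as the dissipation, with the prefactor $\|b\|_{\dot B^{3/2}_{2,1}}$ which is \emph{large} in the setting of the lemma (only $u_0$ is small), so it can be neither absorbed nor Gronwall'ed; the same problem occurs for $(1+a)\nabla\pi$ and $\nabla\lambda\,\mathbb{D}u$, and your claim that solvability of the pressure equation $\operatorname{div}((1+a)\nabla\pi)=\operatorname{div}F$ in $\dot B^{1/2}_{2,1}$ with large $a$ is ``standard'' hides the identical difficulty. Consequently the closed inequality $X(T)\le C_0(\|u_0\|_{\dot B^{1/2}_{2,1}}+X(T)^2)$ is not justified by the scheme you describe.

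The missing idea is the low/high frequency splitting of the coefficients, $a=\dot S_ka+(a-\dot S_ka)$ and likewise for $b,\lambda$: since $a_0\in B^{3/2}_{2,1}$, the high-frequency tails can be made arbitrarily small in $\dot B^{3/2}_{2,1}$ (so the corresponding top-order terms are absorbed by the dissipation), while the truncated parts are smooth, with $\|\nabla\dot S_kb\|_{L^\infty}\lesssim 2^{5k/2}\|b\|_{L^2}$, so their contributions really are lower order in $u$ at the price of fixed constants $2^{Ck}$, handled by Gronwall over the finite interval and then beaten by the smallness of $\|u_0\|_{\dot B^{1/2}_{2,1}}$ (this is how $\varepsilon_0$ ends up depending on $\|a_0\|_{B^{3/2}_{2,1}}$ and the lifespan exceeds $1$). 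Note that the present paper does not prove this lemma at all — it quotes Theorem 3.1 of \cite{2022q} — but its own Proposition \ref{Proposition-4.3} (see the hypothesis (\ref{c55}) and the factors $t2^{5k}\|(b,\lambda)\|^2_{\widetilde L^\infty_t(L^2)}$ in (\ref{c56})) displays precisely this mechanism; without it, your outline only works under an additional smallness assumption on $a_0$ (or on the viscosity variation), which is exactly the restriction the lemma is designed to avoid. The rest of your plan (iterative scheme, contraction in one-derivative-weaker norms, continuation past $t=1$) is standard and fine once the estimate is repaired in this way.
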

	\vskip 0.5cm
	
	\par In comparison with the local solution in Lemma \ref{Lemma-2}, we need to prove the propagation of the regularity of $u$ in $\dot{H}^{-2\delta}(\mathbb{R}^{3})$ with $\delta\in]1/2,3/4[$ before showing that $T^*$ can be extended to $+\infty$ for system (\ref{a1}), which guarantees the decay estimates of the velocity field $u$ at infinity.
	
	\begin{Proposition}\label{Proposition-4.3} Let $u_{0}\in \dot{H}^{-2\delta}(\mathbb{R}^{3})$ with $\delta\in(\frac{1}{2},\frac{3}{4})$ and $a,$ $b,$ $\lambda\in \widetilde{L}^{\infty}_{T}(B^{\frac{3}{2}}_{2,1})$ satisfy
		\begin{align}
		1+a\geq \underline{a}>0\quad{\rm and}\quad 1+b\geq \underline{b}>0.
		\end{align}
		We assume that $u$, $v$ be two solenoidal vector field which satisfy $\operatorname{div} u=\operatorname{div} v=0$ such that $(u,v)\in C([0, T];\dot{B}^{0}_{3,1}(\mathbb{R}^{3}))$ $\cap$ $L^{1}_{loc}([0,T];\dot{B}^{2}_{3,1}(\mathbb{R}^{3}))$ solve
		\begin{align}\label{c54}
		\left\{\begin{array}{lll}
		\partial_{t} u+v\cdot\nabla u-\operatorname{div} (2(1+b)\mathbb{D}u)+(1+a)\nabla\pi=-\nabla\lambda~\mathbb{D}u,\\
		\operatorname{div} u=0,\\
		u|_{t=0}=u_{0}.
		\end{array}\right.
		\end{align}
		For any small positive constant $c$, there exists a sufficiently large integer $k\in\mathbb{Z},$ such that
		\begin{align}\label{c55}
		\|(a-\dot{S}_{k}a,~b-\dot{S}_{k}b,~\lambda-\dot{S}_{k}\lambda)\|_{\widetilde{L}^{\infty}_{T}(B^{\frac{3}{2}}_{2,1})}\leq c,
		\end{align}
		then there exists a constant $C,$ which depends only on $c$ and for any $0<t<T$, satisfies
		\begin{align}\label{c56}
		\begin{aligned}
		&\|u\|_{\widetilde{L}^{\infty}_{t}(\dot{H}^{-2\delta})}+\|u\|_{\widetilde{L}^{1}_{t}(\dot{H}^{2(1-\delta)})}+\|\nabla\pi\|_{\widetilde{L}^{1}_{t}(\dot{H}^{-2\delta})}\\
		\leq& C\bigg\{\|u_{0}\|_{\dot{H}^{-2\delta}}+t^{\frac{1}{4}}2^{(\frac{3}{2}-2\delta)k}\|(a,b,\lambda)\|_{\widetilde{L}^{\infty}_{t}(B_{2,1}^{\frac{3}{2}})}\bigg(1+\|v\|_{L^{\infty}_{t}(\dot{B}_{3, 1}^{0})}\bigg)\\
		&\quad\times\|u\|_{L^{\infty}_{t}(\dot{B}_{3, 1}^{0})}^{\frac{1}{4}} \|u\|_{L^{1}_{t}(\dot{B}_{3, 1}^{2})}^{\frac{3}{4}}\bigg\}\exp\bigg\{\| v\|_{L^{1}_{t}(\dot{B}^{2}_{3,1})}+t2^{5k}\|(b,\lambda)\|^{2}_{\widetilde{L}^{\infty}_{t}(L^{2})}\bigg\}.
		\end{aligned}
		\end{align} 
	\end{Proposition}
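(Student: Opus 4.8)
The plan is to apply the operator $\dot{\Delta}_j$ to the momentum equation in \eqref{c54}, do an $\dot{H}^{-2\delta}$ (i.e.\ weighted $L^2$) energy estimate at each frequency, sum against $2^{-2\delta j}$, and close a Gr\"onwall-type inequality. First I would write the equation for $u_j:=\dot{\Delta}_j u$: applying $\dot{\Delta}_j\mathbb{P}$ kills the pressure in the principal part and produces
$$
\partial_t u_j + \dot{S}_{k-1}(1+b)(-\Delta)u_j = \text{(commutators)} + \text{(paraproduct remainders)} + \text{(transport term)} + \dot{\Delta}_j\mathbb{P}(a\nabla\pi) + \dot{\Delta}_j\mathbb{P}(\nabla\lambda\cdot\mathbb{D}u),
$$
where I have split $(1+b)\Delta u$ into the low-frequency "good" part $\dot{S}_{k-1}(1+b)\Delta u_j$ and error terms involving $b-\dot{S}_k b$ (small by \eqref{c55}) and high-frequency pieces of $b$. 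Taking the $L^2$ inner product with $u_j$, the good term gives a coercive contribution $\underline{b}\,2^{2j}\|u_j\|_{L^2}^2$ (up to a lower-order commutator between $\dot{S}_{k-1}(1+b)$ and $\dot\Delta_j$, controlled by $2^k\|\nabla b\|$ times $\|\nabla u_j\|\|u_j\|$, absorbable for $t$ small or handled in the exponential). The transport term $v\cdot\nabla u_j$ integrates to a commutator $[\dot\Delta_j, v\cdot\nabla]u$ since $\operatorname{div}v=0$, and Lemma~\ref{Lemma-2.3} (case $s=-2\delta\in(-3/2,-1/2)\subset(-1,1)$, after relabelling, or rather a version for $\dot H^{-2\delta}=\dot B^{-2\delta}_{2,2}$) bounds $\sum_j 2^{-4\delta j}\|[\dot\Delta_j,v\cdot\nabla]u\|_{L^1_t L^2}^2$ by $\int_0^t\|v\|_{\dot B^2_{3,1}}\|u\|_{\dot H^{-2\delta}}$; this is where the $\exp\{\|v\|_{L^1_t(\dot B^2_{3,1})}\}$ factor comes from.

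Next I would estimate the right-hand side terms in $\widetilde{L}^1_t(\dot H^{-2\delta})$. The pressure is recovered from the elliptic equation: applying $\operatorname{div}$ to \eqref{c54} gives $\operatorname{div}((1+a)\nabla\pi) = -\operatorname{div}(v\cdot\nabla u) + \operatorname{div}\operatorname{div}(2(1+b)\mathbb{D}u) - \operatorname{div}(\nabla\lambda\cdot\mathbb{D}u)$, so $\pi$ (hence $a\nabla\pi$, and also $\nabla\lambda\cdot\mathbb{D}u$, $b\Delta u$) is, modulo the small/low-frequency splitting, a bilinear expression in $(a,b,\lambda)\in\widetilde L^\infty_t(B^{3/2}_{2,1})$ and $u$. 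The key product estimate is of the schematic form: for $g\in B^{3/2}_{2,1}(\mathbb R^3)$ and $h$, one has $\|gh\|_{\dot H^{-2\delta}}\lesssim \|g\|_{B^{3/2}_{2,1}}\|h\|_{\dot H^{-2\delta}}$ type bounds, but since $2\delta<3/2$ one actually wants $\|g\,\nabla^2 u\|_{\dot H^{-2\delta}}\lesssim \|g\|_{B^{3/2}_{2,1}}\|u\|_{\dot B^{?}_{3,1}}$; the exponents $\frac14$ and $\frac34$ in \eqref{c56} and the power of $t^{1/4}$ strongly suggest interpolating $\|u\|$ in a space between $\dot B^0_{3,1}$ and $\dot B^2_{3,1}$ — namely $\|u\|_{\dot B^{3/2}_{3,1}}\lesssim \|u\|_{\dot B^0_{3,1}}^{1/4}\|u\|_{\dot B^2_{3,1}}^{3/4}$ and then $\int_0^t(\cdots)\lesssim t^{1/4}\|u\|_{L^\infty_t(\dot B^0_{3,1})}^{1/4}\|u\|_{L^1_t(\dot B^2_{3,1})}^{3/4}$ by H\"older in time with exponents $(4,4/3)$. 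So after the frequency splitting I would bound each RHS term by $2^{(3/2-2\delta)k}\|(a,b,\lambda)\|_{\widetilde L^\infty_t(B^{3/2}_{2,1})}$ (the factor $2^{(3/2-2\delta)k}$ coming from the Bernstein loss in moving $\dot S_k$ of $(a,b,\lambda)$ from $B^{3/2}_{2,1}$ down to a space compatible with the negative index $-2\delta$) times the interpolated $u$-norm, plus a contribution from $v\cdot\nabla v$-type terms giving the $(1+\|v\|_{L^\infty_t(\dot B^0_{3,1})})$ factor through $\|a\nabla\pi\|$ where $\pi$ also sees the $v$-transport source.

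I would then assemble: after summing the per-frequency estimates in $\ell^2$ with weight $2^{-2\delta j}$ and integrating in time, one gets
$$
\|u\|_{\widetilde L^\infty_t(\dot H^{-2\delta})} + \underline b\,\|u\|_{\widetilde L^1_t(\dot H^{2(1-\delta)})} \le \|u_0\|_{\dot H^{-2\delta}} + (\text{RHS data term}) + \int_0^t \Big(\|v\|_{\dot B^2_{3,1}} + 2^{5k}\|(b,\lambda)\|^2_{\widetilde L^\infty(L^2)}\Big)\|u\|_{\dot H^{-2\delta}}\,d\tau,
$$
where the $2^{5k}\|(b,\lambda)\|^2$ term collects the low-frequency-commutator losses ($2^k$ per derivative, a couple of derivatives, and the square from a Cauchy–Schwarz that trades $\|\nabla u_j\|\|u_j\|$ into $\epsilon 2^{2j}\|u_j\|^2 + \epsilon^{-1}\|u_j\|^2$ with $\epsilon^{-1}\sim 2^{ck}\|\cdot\|^2$). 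Gr\"onwall's lemma then yields \eqref{c56}. The pressure estimate $\|\nabla\pi\|_{\widetilde L^1_t(\dot H^{-2\delta})}$ follows a posteriori from the elliptic representation once $u$ is controlled. \textbf{The main obstacle} I anticipate is the bookkeeping of the frequency-truncation parameter $k$: one must choose $k$ large enough that \eqref{c55} holds with a small constant $c$ (so the "bad" variable-coefficient term $b-\dot S_k b$ times $\Delta u$ can be absorbed into the coercive term), while simultaneously tracking the losses $2^{(3/2-2\delta)k}$ and $2^{5k}$ that $k$ introduces on the other terms — these are harmless here because at this stage $k$ is merely \emph{fixed} (it will be chosen in the global argument, where smallness of $(a,b,\lambda)$ in $\widetilde L^\infty$ is not assumed, only smallness of the high-frequency tail), but getting the paraproduct decompositions and commutator estimates to land in $\dot H^{-2\delta}$ with exactly the stated powers requires care, particularly checking that $-2\delta\in(-1,1)$ is \emph{not} quite true (since $2\delta>1$), so one cannot directly invoke \eqref{a22} of Lemma~\ref{Lemma-2.3} for the transport commutator and must instead prove the needed commutator bound in $\dot B^{-2\delta}_{2,2}$ by hand — splitting $[\dot\Delta_j,v\cdot\nabla]u$ via Bony decomposition and using $\operatorname{div}v=0$, which costs an extra $\|v\|_{\dot B^2_{3,1}}$ rather than $\|\nabla v\|_{L^\infty}$, consistent with the form of the exponential in \eqref{c56}.
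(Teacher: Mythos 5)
Your proposal follows essentially the same route as the paper's proof: truncate the coefficients by $\dot S_k$ and put the small tails $(a-\dot S_k a, b-\dot S_k b,\lambda-\dot S_k\lambda)$ on the right, do per-frequency $\dot H^{-2\delta}$ energy estimates with hand-made Bony commutator bounds (you correctly note that $-2\delta<-1$ rules out a direct use of Lemma \ref{Lemma-2.3}, and the cost $\|v\|_{\dot B^{2}_{3,1}}$ matches the paper's estimate), recover the pressure from $\operatorname{div}((1+a)\nabla\pi)=\cdots$, track the Bernstein losses $2^{(\frac32-2\delta)k}$ and $2^{5k}$, and close by Gronwall, with the $t^{1/4}$ and the $\tfrac14$--$\tfrac34$ exponents coming from $\|\nabla u\|_{\dot B^{1/2}_{3,1}}\lesssim\|u\|^{1/4}_{\dot B^{0}_{3,1}}\|u\|^{3/4}_{\dot B^{2}_{3,1}}$ plus H\"older in time, exactly as in the paper. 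One caveat of bookkeeping rather than substance: the $\dot H^{-2\delta}$ bound for $\nabla\pi$ cannot be left wholly a posteriori, because $(a-\dot S_k a)\nabla\pi$ sits inside the error term of the $u$-equation and $\dot S_k a\,\nabla\pi$ forces a separate $L^1_t(L^2)$ pressure estimate (the paper's Steps 2--3, where the source $v\cdot\nabla u$ --- not $v\cdot\nabla v$ --- produces the $\bigl(1+\|v\|_{L^{\infty}_{t}(\dot B^{0}_{3,1})}\bigr)$ factor), so the three estimates are closed simultaneously with the tail terms absorbed via \eqref{c55}, which your ``modulo the small/low-frequency splitting'' remark essentially anticipates.
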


	\begin{proof}
		We first get from  $1+a\geq \underline{b}$ and $1+b\geq\underline{b}$ that
		$$1+\dot{S}_{k}a=1+a+(\dot{S}_{k}a-a)\geq \frac{\underline{a}}{2}~{\rm and}~ 1+\dot{S}_{k}b=1+b+(\dot{S}_{k}b-b)\geq \frac{\underline{b}}{2}.$$
		Correspondingly, we rewrite the $u$ equation of (\ref{c54}) as 
		\begin{equation}\label{c57}
		\begin{aligned}
		\partial_{t}u+v\cdot\nabla u-\operatorname{div}(2(1+\dot{S}_{k}b)\mathbb{D}u)+\nabla\pi
		=E_{k}-\dot{S}_{k}a\nabla\pi-\nabla\dot{S}_{k}\lambda~\mathbb{D}u
		\end{aligned}
		\end{equation}
		with
		$$\begin{aligned}
		E_{k}=&\operatorname{div}(2(b-\dot{S}_{k}b)\mathbb{D}u)-\nabla(\lambda-\dot{S}_{k}\lambda)\mathbb{D}u-(a-\dot{S}_{k}a)\nabla\pi.
		\end{aligned}$$
		\par We now decompose the proof of Proposition \ref{Proposition-4.3} into the following steps:\\
		
		\noindent\textbf{Step 1.} The estimate of $\|u\|_{\widetilde{L}^{\infty}_{t}(\dot{H}^{-2\delta})}$ and $\|u\|_{L^{1}_{t}(\dot{H}^{2-2\delta})}$.\\
		
		Let $\mathbb{P}=I+\nabla(-\Delta)^{-1}\operatorname{div}$~be the Leray projection operator. Applying $\dot{\Delta}_{j}\mathbb{P}$ to $(\ref{c57})$ gives 
		\begin{equation}\label{c58}
		\begin{aligned}
		\partial_{t}\dot{\Delta}_{j}u+\dot{\Delta}_{j}\mathbb{P}(v\cdot\nabla u)-\dot{\Delta}_{j}\mathbb{P}\{\operatorname{div}(2(1+\dot{S}_{k}b)\mathbb{D}u)\}
		=\dot{\Delta}_{j}\mathbb{P}(E_{k}-\dot{S}_{k}a\nabla\pi-\nabla\dot{S}_{k}\lambda~\mathbb{D}u).
		\end{aligned}
		\end{equation}
		Multiplying the above equation by $\dot{\Delta}_{j}u$ and then integrating the resulting equations on $x\in\mathbb{R}^{3}$ leads to
		\begin{equation}\label{c59}
		\begin{aligned}
		\frac{d}{dt}\|\dot{\Delta}_{j}u\|_{L^{2}}+2^{2j}\|\dot{\Delta}_{j}u\|_{L^{2}}\lesssim&\|[v\cdot\nabla;\dot{\Delta}_{j}\mathbb{P}]u\|_{L^{2}}+2^{j}\|[\dot{S}_{k}b;\dot{\Delta}_{j}]\mathbb{D} u\|_{L^{2}} \\
		&+\|\dot{\Delta}_{j}E_{k}\|_{L^{2}}+\|\dot{\Delta}_{j}(\dot{S}_{k}a\nabla\pi)\|_{L^{2}}+\|\dot{\Delta}_{j}(\nabla\dot{S}_{k}\lambda~\mathbb{D}u)\|_{L^{2}}.
		\end{aligned}
		\end{equation}
		\par In what follows, we shall deal with the right-hand side of~(\ref{c59}).~Firstly applying homogeneous Bony's decomposition yields 
		$$[v\cdot\nabla;\dot{\Delta}_{j}\mathbb{P}]u=[T_{v}\cdot\nabla;\dot{\Delta}_{j}\mathbb{P}]u+T^{'}_{\nabla\dot{\Delta}_{j} u}v-\dot{\Delta}_{j}\mathbb{P}(T_{\nabla u}v)-\dot{\Delta}_{j}\mathbb{P}\operatorname{div}\mathcal{R}(u,v).$$
		It follows again from the above estimate,~which implies that
		$$
		\begin{aligned}
		\|[T_{v}\cdot\nabla;\dot{\Delta}_{j}\mathbb{P}]u\|_{L^{2}}
		&\lesssim\sum_{|j-k|\leq 4}2^{-j}\|\nabla\dot{S}_{k-1}v\|_{L^{\infty}}\|\dot{\Delta}_{k}\nabla u\|_{L^{2}}\\
		& \lesssim\sum_{|j-k|\leq 4} 2^{k-j} \|\nabla v\|_{L^{\infty}}\|\dot{\Delta}_{k}u\|_{L^{2}}\\
		& \lesssim 2^{2j\delta}\sum_{|j-k|\leq 4} 2^{(k-j)(1+2\delta)} \|\nabla v\|_{L^{\infty}}2^{-2k\delta}\|\dot{\Delta}_{k} u\|_{L^{2}}\\
		& \lesssim c_{j}2^{2j\delta}\|\nabla v\|_{L^{\infty}}\|u\|_{\dot {H}^{-2\delta}}.
		\end{aligned}
		$$
		The same estimate hold for $T^{'}_{\nabla\dot{\Delta}_{j} u}v.$ Note that
		$$
		\begin{aligned}
		\|\dot{\Delta}_{j}\mathbb{P}(T_{\nabla u} v)\|_{L^{2}}&\lesssim\sum_{|k-j|\leq 4}\|\dot{S}_{k-1}\nabla u\|_{L^{2}}\|\dot{\Delta}_{k}v\|_{L^{\infty}}\\&
		\lesssim 2^{2j\delta}\sum_{|k-j|\leq 4}2^{2(k-j)\delta}2^{-2k\delta}\|\dot{S}_{k-1} u\|_{L^{2}}\|\dot{\Delta}_{k}\nabla v\|_{L^{\infty}}\\&
		\lesssim c_{j}2^{2j\delta}\|u\|_{\dot {H}^{-2\delta}}\|\nabla v\|_{L^{\infty}}.
		\end{aligned}
		$$
		For $\dot{\Delta}_{j}\mathbb{P}\operatorname{div}\mathcal{R}(u,v),$ we have
		$$
		\begin{aligned}
		\|\dot{\Delta}_{j}\mathbb{P}\operatorname{div}\mathcal{R}(u,v)\|_{L^{2}}&\lesssim2^{2j}\|\dot{\Delta}_{j}\mathbb{P}\mathcal{R}(u,v)\|_{L^{\frac{6}{5}}}\\&
		\lesssim2^{2j}\sum_{k\geq j-3}\|\dot{\Delta}_{k} u\|_{L^{2}}\|\dot{\Delta}_{k} v\|_{L^{3}}\\&
		\lesssim 2^{2j\delta}\sum_{k\geq j-3}2^{(2-2\delta)(j-k)} 2^{-2k\delta}\|\dot{\Delta}_{k} u\|_{L^{2}}2^{k}\|\dot{\Delta}_{k}\nabla v\|_{L^{3}}\\&
		\lesssim c_{j}2^{2j\delta}\|u\|_{\dot {H}^{-2\delta}}\| \nabla v\|_{\dot{B}^{1}_{3,1}}.
		\end{aligned}
		$$
		From which, we obtain
		\begin{align}\label{c60}
		\|[v\cdot\nabla;\dot{\Delta}_{j}\mathbb{P}]u\|_{L^{2}}\lesssim c_{j}2^{2j\delta}\|\nabla v\|_{\dot{B}^{1}_{3,1}}\|u\|_{\dot {H}^{-2\delta}}.
		\end{align}
		The estimate of $[\dot{S}_{k}b;\dot{\Delta}_{j}]\mathbb{D}u,$ we get, by virtue of Lemma \ref{Lemma-2.2}, that
		\begin{align}\label{c61}
		\|[\dot{S}_{k}b;\dot{\Delta}_{j}]\mathbb{D}u\|_{L^{2}}\lesssim c_{j}2^{j(2\delta-1)}\|\nabla\dot{S}_{k}b\|_{L^{\infty}}\|u\|_{\dot{H}^{1-2\delta}}.
		\end{align}
		Similarly, for $\delta<\frac{3}{4}$, we obtain that
		\begin{align}\label{c62}
		\begin{aligned}
		\|\dot{\Delta}_{j} E_{k}\|_{L^{2}}\lesssim c_{j}2^{2j\delta} \|(a-\dot{S}_{k}a,~b-\dot{S}_{k}b,~\lambda-\dot{S}_{k}\lambda)\|_{\dot{B}^{\frac{3}{2}}_{2,1}}
		\bigg\{\|u\|_{\dot{H}^{2(1-\delta)}}+\|\nabla\pi\|_{\dot{H}^{-2\delta}}\bigg\}.
		\end{aligned}
		\end{align}
		Along the same line, one has
		\begin{equation}\label{c63}
		\begin{aligned}
		\|\dot{\Delta}_{j}(\dot{S}_{k}a\nabla\pi) \|_{L^{2}}
		\lesssim&\|\dot{\Delta}_{j}(T_{\nabla\pi}\dot{S}_{k}a) \|_{L^{2}}+\|\dot{\Delta}_{j}(T_{\dot{S}_{k}a}\nabla\pi) \|_{L^{2}}+\|\dot{\Delta}_{j}\mathcal{R}(\nabla\pi,\dot{S}_{k}a) \|_{L^{2}} \\
		\lesssim&c_j 2^{2j\delta}\|\dot{S}_{k}a\|_{\dot{B}^{-2\delta}_{\infty,2}}\|\nabla\pi\|_{L^{2}}+2^{\frac{3}{2}j}\|\dot{\Delta}_{j}\mathcal{R}(\nabla\pi,\dot{S}_{k}a) \|_{L^{1}}\\
		\lesssim&c_j 2^{2j\delta}\|\dot{S}_{k}a\|_{\dot{H}^{\frac{3}{2}-2\delta}}\|\nabla\pi\|_{L^{2}}.
		\end{aligned}
		\end{equation}
		Similarly,
		\begin{equation}\label{c663}
		\begin{aligned}
		\|\dot{\Delta}_{j}(\nabla\dot{S}_{k}\lambda~\mathbb{D}u)\|_{L^{2}}
		\lesssim&\|\dot{\Delta}_{j}(T_{\mathbb{D}u}\nabla\dot{S}_{k}\lambda) \|_{L^{2}}+\|\dot{\Delta}_{j}(T_{\nabla\dot{S}_{k}\lambda}\mathbb{D}u)\|_{L^{2}}+\|\dot{\Delta}_{j}\mathcal{R}(\mathbb{D}u,\nabla\dot{S}_{k}\lambda) \|_{L^{2}}\\
		\lesssim& c_j 2^{2j\delta}\|\nabla u\|_{\dot{H}^{-2\delta}}\|\nabla\dot{S}_{k}\lambda\|_{L^{\infty}}+2^{\frac{3}{2}j}\|\dot{\Delta}_{j}\mathcal{R}(\mathbb{D}u,\nabla\dot{S}_{k}\lambda) \|_{L^{1}}\\
		\lesssim&  c_j 2^{2j\delta}\|\nabla\dot{S}_{k}\lambda\|_{\dot{B}^{\frac{3}{2}}_{2,\infty}\cap L^{\infty}}\|u\|_{\dot{H}^{1-2\delta}}.
		\end{aligned}
		\end{equation}
		According to Lemma \ref{Lemma-2.1} and Lemma \ref{Lemma-2.2},~we may get
		\begin{align}\label{c664}
		\begin{aligned}
		&\|(\nabla\dot{S}_{k}b,~\nabla\dot{S}_{k}\lambda)\|_{\dot{B}^{\frac{3}{2}}_{2,\infty}\cap L^{\infty}}\lesssim 2^{\frac{5k}{2}}\|(\dot{S}_{k}b,~\dot{S}_{k}\lambda)\|_{L^{2}}.
		\end{aligned}
		\end{align}
		\par Substituting (\ref{c60})-(\ref{c663}) into~(\ref{c59}), using the interpolation inequality
		$\|u\|_{\dot{H}^{1-2\delta}}\lesssim\|u\|^{\frac{1}{2}}_{\dot{H}^{-2\delta}}$ $\|u\|^{\frac{1}{2}}_{\dot{H}^{2(1-\delta)}}$ and the Young's inequality, we write
		\begin{align}\label{c64}
		\begin{aligned}
		&\|u\|_{\widetilde{L}^{\infty}_{t}(\dot{H}^{-2\delta})}+\|u\|_{\widetilde{L}^{1}_{t}(\dot{H}^{2(1-\delta)})}\\
		\lesssim& \|u_{0}\|_{\dot{H}^{-2\delta}}+\int^{t}_{0}\bigg(\|\nabla v\|_{\dot{B}^{1}_{3,1}}+2^{5k}\|(\dot{S}_{k}b,\dot{S}_{k}\lambda)\|^{2}_{L^{2}}\bigg)\|u\|_{\widetilde{L}^{\infty}_{t}(\dot{H}^{-2\delta})}d\tau\\
		&+\|(a-\dot{S}_{k}a,b-\dot{S}_{k}b,\lambda-\dot{S}_k\lambda)\|_{\widetilde{L}^{\infty}_{T}(\dot{B}^{\frac{3}{2}}_{2,1})}\bigg\{\|u\|_{\widetilde{L}^{1}_{t}(\dot{H}^{2(1-\delta)})}+\|\nabla\pi\|_{\widetilde{L}^{1}_{t}(\dot{H}^{-2\delta})}\bigg\}\\&+2^{(\frac{3}{2}-2\delta)k}\|\dot{S}_{k}a\|_{L^{\infty}_{t}(L^{2})}\|\nabla\pi\|_{L^{1}_{t}(L^{2})}.
		\end{aligned}
		\end{align}
		
		\noindent\textbf{Step 2.} The estimate of $\|\nabla\pi\|_{\widetilde{L}^{1}_{t}(\dot{H}^{-2\delta})}.$\\
		
		In order to estimate the pressure function $\pi,$ we get from $(\ref{c54})_{1}$ that
		\begin{equation}\label{c65}
		\begin{aligned}
		\operatorname{div}((1+\dot{S}_{k}a)\nabla \pi)=&-\operatorname{div}(v\cdot\nabla u)+\operatorname{div}\operatorname{div}(2\dot{S}_{k}b\mathbb{D}u)+\operatorname{div}E_{k}-\operatorname{div}(\nabla\dot{S}_{k}\lambda~\mathbb{D}u),
		\end{aligned}
		\end{equation}
		which implies
		$$
		\begin{aligned}
		\operatorname{div}((1+\dot{S}_k a) \nabla \dot{\Delta}_j \pi)=&-\operatorname{div} \dot{\Delta}_j(v \cdot \nabla u)+\operatorname{div} \dot{\Delta}_j E_{k}\\
		&+\operatorname{div} \dot{\Delta}_j(2\nabla \dot{S}_k b ~\mathbb{D}u+\dot{S}_k b \Delta u)-\operatorname{div}\dot{\Delta}_j (\nabla\dot{S}_{k}\lambda~\mathbb{D}u)\\&-\operatorname{div}[\dot{\Delta}_j, \dot{S}_k a] \nabla \pi.
		\end{aligned}
		$$
		Taking $L^{2}$ inner product of the above equation with $\dot{\Delta}_j \pi$ and using a similar argument as (\ref{c59}), we find
		\begin{align}\label{c66}
		\begin{aligned}
		\|\nabla\pi\|_{\widetilde{L}^{1}_{t}(\dot{H}^{-2\delta})}\lesssim&	\|\operatorname{div}(v\cdot\nabla u)\|_{\widetilde{L}^{1}_{t}(\dot{H}^{-1-2\delta})}+\|E_{k}\|_{\widetilde{L}^{1}_{t}(\dot{H}^{-2\delta})}\\&+\|\nabla \dot{S}_k b ~\mathbb{D}u\|_{\widetilde{L}^{1}_{t}(\dot{H}^{-2\delta})}+\|\Delta u\cdot\nabla\dot{S}_k b\|_{\widetilde{L}^{1}_{t}(\dot{H}^{-1-2\delta})}\\&+\|\nabla\dot{S}_{k}\lambda~\mathbb{D}u\|_{\widetilde{L}^{1}_{t}(\dot{H}^{-2\delta})}+\|(2^{-2j\delta }\|[\dot{S}_k a ;\dot{\Delta}_j] \nabla \pi \|_{L^1_{t}(L^2)})_{j\in\mathbb{Z}}\|_{l^2}.
		\end{aligned}
		\end{align}
		We now estimate term by term in (\ref{c66}). Then applying Lemma \ref{Lemma-2.2} yields
		\begin{align}\label{cc20}
		\|\nabla v\cdot\nabla u\|_{\widetilde{L}^{1}_{t}(\dot{H}^{-1-2\delta})}\lesssim\int^{t}_{0}\|\nabla v\|_{\dot{B}^{1}_{3,1}}\|u\|_{\widetilde{L}^{\infty}_{t}(\dot{H}^{-2\delta})}d\tau,
		\end{align}
		and
		\begin{align}
		\begin{aligned}
		\|E_{k}\|_{\widetilde{L}^{1}_{t}(\dot{H}^{-2\delta})}\lesssim& \|(a-\dot{S}_{k}a,~b-\dot{S}_{k}b,~\lambda-\dot{S}_{k}\lambda)\|_{\widetilde{L}^{\infty}_{T}(\dot{B}^{\frac{3}{2}}_{2,1})}\bigg\{\|u\|_{\widetilde{L}^{1}_{t}(\dot{H}^{2(1-\delta)})}+\|\nabla\pi\|_{\widetilde{L}^{1}_{t}(\dot{H}^{-2\delta})}\bigg\}.
		\end{aligned}
		\end{align}
		Similarly,
		\begin{align}
		\begin{aligned}
		&\|\nabla \dot{S}_k b~\mathbb{D}u\|_{\widetilde{L}^{1}_{t}(\dot{H}^{-2\delta})}
		\lesssim \|\nabla\dot{S}_{k}b\|_{L^{2}_{t}(\dot{B}^{\frac{3}{2}}_{2,\infty}\cap L^{\infty})}\|u\|_{L^{2}_{t}(\dot {H}^{1-2\delta})},
		\end{aligned}
		\end{align}
		and
		\begin{align}\label{c67}
		\|\Delta u\cdot\nabla\dot{S}_k b\|_{\widetilde{L}^{1}_{t}(\dot{H}^{-1-2\delta})}\lesssim \|\nabla\dot{S}_{k}b\|_{L^{2}_{t}(\dot{B}^{\frac{3}{2}}_{2,\infty}\cap L^{\infty})}\|u\|_{L^{2}_{t}(\dot {H}^{1-2\delta})}.
		\end{align}
		The same estimate holds for $\nabla\dot{S}_{k}\lambda~\mathbb{D}u,$ one has
		\begin{align}
		\|\nabla\dot{S}_{k}\lambda~\mathbb{D}u\|_{\widetilde{L}^{1}_{t}(\dot{H}^{-2\delta})}\lesssim \|\nabla\dot{S}_{k}\lambda\|_{L^{2}_{t}(\dot{B}^{\frac{3}{2}}_{2,\infty}\cap L^{\infty})}\|u\|_{L^{2}_{t}(\dot {H}^{1-2\delta})}.
		\end{align}
		Whereas applying Bony’s decomposition once again leads to
		$$[\dot{S}_k a ;\dot{\Delta}_j] \nabla \pi=[T_{\dot{S}_k a};\dot{\Delta}_j]\nabla \pi+T^{'}_{\dot{\Delta}_j \nabla \pi}\dot{S}_k a-\dot{\Delta}_j T_{\nabla \pi} \dot{S}_k a-\dot{\Delta}_j\mathcal{R}( \dot{S}_k a,\nabla \pi).$$
		Applying Lemma \ref{Lemma-2.2} yields
		$$
		\begin{aligned}
		\|[T_{\dot{S}_k a};\dot{\Delta}_j]\nabla \pi\|_{L^{2}}
		&\lesssim\sum_{|j-l|\leq 4}2^{-j}\|\nabla\dot{S}_{l-1}\dot{S}_{k}a\|_{L^{\infty}}\|\dot{\Delta}_{l}\nabla \pi\|_{L^{2}}\\
		&\lesssim c_{j}2^{2j\delta}\|\dot{S}_{k}a\|_{\dot{H}^{\frac{3}{2}-2\delta}}\|\nabla \pi\|_{L^{2}}.
		\end{aligned}
		$$
		Similarly, we have 
		$$
		\begin{aligned}
		\|T^{'}_{\dot{\Delta}_j \nabla \pi}\dot{S}_k a\|_{L^{2}}&\lesssim\sum_{l\geq j-2}\|\dot{S}_{l+2}\dot{\Delta}_{j}\nabla \pi\|_{L^{\infty}}\|\dot{\Delta}_{l}\dot{S}_{k}a\|_{L^{2}}\\
		&\lesssim2^{2j\delta}\|\dot{\Delta}_{j}\nabla \pi\|_{L^{2}} \sum_{l\geq j-2}2^{(j-l)(\frac{3}{2}-2\delta)}2^{l(\frac{3}{2}-2\delta)}\|\dot{\Delta}_{l}\dot{S}_{k}a\|_{L^{2}}\\
		&\lesssim c_{j}2^{2j\delta}\|\nabla \pi\|_{L^{2}}\|\dot{S}_{k}a\|_{\dot{H}^{\frac{3}{2}-2\delta}}.
		\end{aligned}
		$$
		The same estimate holds for $\dot{\Delta}_j T_{\nabla \pi} \dot{S}_k a$. Notice that
		$$
		\begin{aligned}
		\|\dot{\Delta}_j\mathcal{R}( \dot{S}_k a,\nabla \pi)\|_{L^{2}}&\lesssim2^{\frac{3}{2}j}\|\dot{\Delta}_j\mathcal{R}( \dot{S}_k a,\nabla \pi)\|_{L^{1}}\\
		&\lesssim2^{2j\delta}\sum_{l\geq j-3}2^{(j-l)(\frac{3}{2}-2\delta)}\|\dot{\Delta}_{l}\nabla \pi\|_{L^{2}}2^{l(\frac{3}{2}-2\delta)}\|\dot{\Delta}_{l}\dot{S}_{k}a\|_{L^{2}}\\
		&\lesssim c_{j}2^{2j\delta}\|\nabla \pi\|_{L^{2}}\|\dot{S}_{k}a\|_{\dot{H}^{\frac{3}{2}-2\delta}}.
		\end{aligned}
		$$
		Thus, we deduce that
		\begin{align}\label{c68}
		\|[\dot{S}_k a ;\dot{\Delta}_j] \nabla \pi\|_{L^{1}_{t}(L^{2})}\lesssim c_{j}2^{2j\delta}\|\dot{S}_{k}a\|_{\widetilde{L}^{\infty}_{t}(\dot{H}^{\frac{3}{2}-2\delta})}\|\nabla \pi\|_{L^{1}_{t}(L^{2})}.
		\end{align}
		\par Substituting (\ref{cc20})-(\ref{c68}) into (\ref{c66}) and using the interpolation inequality
		$\|u\|_{\dot{H}^{1-2\delta}}\lesssim\|u\|^{\frac{1}{2}}_{\dot{H}^{-2\delta}}\|u\|^{\frac{1}{2}}_{\dot{H}^{2(1-\delta)}}$ and Young's inequality, we write
		\begin{align}\label{c69}
		\begin{aligned}
		\|\nabla\pi\|_{\widetilde{L}^{1}_{t}(\dot{H}^{-2\delta})}\leq&C\int^{t}_{0}\bigg(\|\nabla v\|_{\dot{B}^{1}_{3,1}}+C2^{5k}\|(\dot{S}_{k}b,\dot{S}_{k}\lambda)\|^{2}_{L^{2}}\bigg)\|u\|_{\widetilde{L}^{\infty}_{t}(\dot{H}^{-2\delta})}d\tau\\
		&+C\|(a-\dot{S}_{k}a,b-\dot{S}_{k}b,\lambda-\dot{S}_k \lambda)\|_{\widetilde{L}^{\infty}_{T}(\dot{B}^{\frac{3}{2}}_{2,1})}\bigg\{\|u\|_{\widetilde{L}^{1}_{t}(\dot{H}^{2(1-\delta)})}+\|\nabla\pi\|_{\widetilde{L}^{1}_{t}(\dot{H}^{-2\delta})}\bigg\}\\
		&+C\|\dot{S}_{k}a\|_{\widetilde{L}^{\infty}_{t}(\dot{H}^{\frac{3}{2}-2\delta})}\|\nabla \pi\|_{L^{1}_{t}(L^{2})}+\frac{1}{8}\|u\|_{\widetilde{L}^{1}_{t}(\dot{H}^{2-2\delta})},
		\end{aligned}
		\end{align}
		which along with (\ref{c64}) ensures that
		$$
		\begin{aligned}
		&\|u\|_{\widetilde{L}^{\infty}_{t}(\dot{H}^{-2\delta})}+\|u\|_{\widetilde{L}^{1}_{t}(\dot{H}^{2(1-\delta)})}+\|\nabla\pi\|_{\widetilde{L}^{1}_{t}(\dot{H}^{-2\delta})}\\
		\leq&\|u_{0}\|_{\dot{H}^{-2\delta}}+C\int^{t}_{0}\bigg(\|\nabla v\|_{\dot{B}^{1}_{3,1}}+2^{5k}\|(\dot{S}_{k}b,\dot{S}_{k}\lambda)\|^{2}_{L^{2}}\bigg)\|u\|_{\widetilde{L}^{\infty}_{t}(\dot{H}^{-2\delta})}d\tau\\
		&+C\|(a-\dot{S}_{k}a,b-\dot{S}_{k}b,\lambda-\dot{S}_k \lambda)\|_{\widetilde{L}^{\infty}_{T}(\dot{B}^{\frac{3}{2}}_{2,1})}\bigg\{\|u\|_{\widetilde{L}^{1}_{t}(\dot{H}^{2(1-\delta)})}+\|\nabla\pi\|_{\widetilde{L}^{1}_{t}(\dot{H}^{-2\delta})}\bigg\}\\
		&+C\|\dot{S}_{k}a\|_{\widetilde{L}^{\infty}_{t}(\dot{H}^{\frac{3}{2}-2\delta})}\|\nabla \pi\|_{L^{1}_{t}(L^{2})}.
		\end{aligned}
		$$
		Then we have used $a,$ $b,$ $\lambda\in \widetilde{L}^{\infty}_{T}(B^{\frac{3}{2}}_{2,1})$ and chosen $k$ sufficiently large so that
		\begin{equation}
		C\|(a-\dot{S}_{k}a,b-\dot{S}_{k}b,\lambda-\dot{S}_k \lambda)\|_{\widetilde{L}^{\infty}_{t}(\dot{B}^{\frac{3}{2}}_{2,1})}\leq\frac{1}{8},
		\end{equation}
		from which, we duduce that
		\begin{align}\label{c70}
		\begin{aligned}
		&\|u\|_{\widetilde{L}^{\infty}_{t}(\dot{H}^{-2\delta})}+\|u\|_{\widetilde{L}^{1}_{t}(\dot{H}^{2(1-\delta)})}+\|\nabla\pi\|_{\widetilde{L}^{1}_{t}(\dot{H}^{-2\delta})}\\
		\leq&\|u_{0}\|_{\dot{H}^{-2\delta}}+C\int^{t}_{0}\bigg(\|\nabla v\|_{\dot{B}^{1}_{3,1}}+2^{5k}\|(\dot{S}_{k}b,\dot{S}_{k}\lambda)\|^{2}_{L^{2}}\bigg)\|u\|_{\widetilde{L}^{\infty}_{t}(\dot{H}^{-2\delta})}d\tau\\
		&+C\|\dot{S}_{k}a\|_{\widetilde{L}^{\infty}_{t}(\dot{H}^{\frac{3}{2}-2\delta})}\|\nabla \pi\|_{L^{1}_{t}(L^{2})}.
		\end{aligned}
		\end{align}
		
		\noindent\textbf{Step 3.} The estimate of $\|\nabla\pi\|_{L^{1}_{t}(L^{2})}.$\\
		
		\par We first get by taking div to $(\ref{c54})_{1}$ that
		$$
		\begin{aligned}
		\operatorname{div}\bigg\{\left(1+a\right)\nabla\pi\bigg\}=&-\operatorname{div}\left(v\cdot\nabla u\right)+\operatorname{div}\operatorname{div} \left(2b\mathbb{D}u\right)
		-\operatorname{div}\left(\nabla\lambda~\mathbb{D}u\right),
		\end{aligned}
		$$
		from which and $0<\underline{a}\leq 1+a$, we deduce by a standard energy estimate that
		\begin{equation}\label{b25}
		\begin{aligned}
		\|\nabla\pi\|_{L^{1}_{t}(L^{2})}\lesssim&\|v\cdot\nabla u\|_{L^{1}_{t}(L^{2})}+\|\Delta u\cdot\nabla b\|_{L^{1}_{t}(\dot{H}^{-1})}\\&+\|\operatorname{div}(\nabla b~\mathbb{D}u)\|_{L^{1}_{t}(\dot{H}^{-1})}+\|\operatorname{div}\left(\nabla\lambda~\mathbb{D}u\right)\|_{L^{1}_{t}(\dot{H}^{-1})}.
		\end{aligned}
		\end{equation}
		Then it follows from interpolation inequalities in Besov spaces that
		\begin{equation}\label{b26}
		\begin{aligned}
		\|v\cdot\nabla u\|_{L^{2}}\lesssim &\|v\|_{L^{3}}\|\nabla u\|_{L^{6}}\lesssim \|v\|_{\dot{B}_{3, 1}^{0}}\|\nabla u\|_{\dot{B}_{3, 1}^{\frac{1}{2}}}
		\lesssim\|v\|_{\dot{B}_{3, 1}^{0}}\|u\|_{\dot{B}_{3, 1}^{0}}^{\frac{1}{4}}\|u\|_{\dot{B}_{3, 1}^{2}}^{\frac{3}{4}}.
		\end{aligned}
		\end{equation}
		Similarly, we can deduce 
		\begin{equation}\label{b27}
		\begin{aligned}
		\|\Delta u\cdot\nabla b\|_{\dot{H}^{-1}}\lesssim&\|T_{\nabla b}\Delta u\|_{\dot{H}^{-1}}+\|T_{\Delta u}\nabla b\|_{\dot{H}^{-1}}+\|\operatorname{div}\mathcal{R}(b,\Delta u)\|_{\dot{H}^{-1}}\\
		\lesssim&\|\nabla b\|_{L^{3}}\|\Delta u\|_{\dot{B}_{6, 2}^{-1}}+\|\mathcal{R}(b,\Delta u)\|_{\dot{B}^{\frac{1}{2}}_{\frac{3}{2},2}} \\
		\lesssim&\|b\|_{\dot{B}_{3, 1}^{1}}\|u\|_{\dot{B}_{3, 1}^{0}}^{\frac{1}{4}}\|u\|_{\dot{B}_{3, 1}^{2}}^{\frac{3}{4}},
		\end{aligned}
		\end{equation}	
		and
		\begin{equation}\label{b28}
		\begin{aligned}
		\|\operatorname{div}(\nabla b~\mathbb{D}u)\|_{\dot{H}^{-1}}\lesssim\|\nabla u\|_{L^{6}}\|\nabla b\|_{L^{3}} \lesssim\| b\|_{\dot{B}_{3, 1}^{1}}\|u\|_{\dot{B}_{3, 1}^{0}}^{\frac{1}{4}}\|u\|_{\dot{B}_{3, 1}^{2}}^{\frac{3}{4}}.
		\end{aligned}
		\end{equation}	
		Along the same way, we obtain
		\begin{equation}\label{b29}
		\begin{aligned}
		\|\operatorname{div}\left(\nabla\lambda~\mathbb{D}u\right)\|_{\dot{H}^{-1}}
		\lesssim \|\lambda\|_{\dot{B}_{3, 1}^{1}}\|u\|_{\dot{B}_{3, 1}^{0}}^{\frac{1}{4}}\|u\|_{\dot{B}_{3, 1}^{2}}^{\frac{3}{4}}.
		\end{aligned}
		\end{equation}
		\par Substituting (\ref{b26}), (\ref{b27}), (\ref{b28}) and (\ref{b29}) into (\ref{b25}) and the fact that
		\begin{equation}\label{b30}
		\begin{aligned}
		\|\nabla\pi\|_{L^{1}_{t}(L^{2})}\lesssim&\int^{t}_{0}\|v\|_{\dot{B}_{3, 1}^{0}}\|u\|_{\dot{B}_{3, 1}^{0}}^{\frac{1}{4}} \|u\|_{\dot{B}_{3, 1}^{2}}^{\frac{3}{4}}d\tau+\|(b,\lambda)\|_{L^{\infty}_{t}(\dot{B}_{3, 1}^{1})}\int^{t}_{0}\|u\|_{\dot{B}_{3, 1}^{0}}^{\frac{1}{4}}\|u\|_{\dot{B}_{3, 1}^{2}}^{\frac{3}{4}} d\tau,
		\end{aligned}
		\end{equation}
		which combining (\ref{c55}) and (\ref{c70}) and using the Young's inequality, for some integer $k$, we achieve
		\begin{align}\label{ddd1}
		\begin{aligned}
		&\|u\|_{\widetilde{L}^{\infty}_{t}(\dot{H}^{-2\delta})}+\|u\|_{\widetilde{L}^{1}_{t}(\dot{H}^{2(1-\delta)})}+\|\nabla\pi\|_{\widetilde{L}^{1}_{t}(\dot{H}^{-2\delta})}\\
		\lesssim& \|u_{0}\|_{\dot{H}^{-2\delta}}+\int^{t}_{0}\bigg(\|\nabla v\|_{\dot{B}^{1}_{3,1}}+C2^{5k}\|(b,\lambda)\|^{2}_{L^{2}}\bigg)\|u\|_{\widetilde{L}^{\infty}_{t}(\dot{H}^{-2\delta})}d\tau\\&+2^{(\frac{3}{2}-2\delta)k}\|a\|_{L^{\infty}_{t}(L^{2})}\int^{t}_{0}\|v\|_{\dot{B}_{3, 1}^{0}}\|u\|_{\dot{B}_{3, 1}^{0}}^{\frac{1}{4}} \|u\|_{\dot{B}_{3, 1}^{2}}^{\frac{3}{4}}d\tau\\&+2^{(\frac{3}{2}-2\delta)k}\|a\|_{L^{\infty}_{t}(L^{2})}\|(b,\lambda)\|_{L^{\infty}_{t}(\dot{B}_{3, 1}^{1})}\int^{t}_{0}\|u\|_{\dot{B}_{3, 1}^{0}}^{\frac{1}{4}}\|u\|_{\dot{B}_{3, 1}^{2}}^{\frac{3}{4}} d\tau.
		\end{aligned}
		\end{align}
		Finally, applying Gronwall's Lemma to \eqref{ddd1}, it implies the desired inequality (\ref{c56}).
	\end{proof}
	
	\par  With Lemma \ref{Lemma-2}, we can prove the propagation of regularities of $u_0\in \dot{H}^{-2\delta}$ for $u$ on $[0,1]$, even higher regularity estimates can be obtained with the following results:

	\begin{Corollary}\label{Corollary-3.1}
		Under the assumptions of Theorem \ref{Theorem-1}, there exists $t_0\in]0,1[$ such that $u(t_0)\in \dot{H}^{-2\delta}\cap\dot{B}^{\frac{5}{2}}_{2,1}(\mathbb{R}^3).$ Moreover, there holds 
		\begin{equation}\label{c72}
		\|u(t_{0})\|_{\dot{H}^{-2\delta}\cap \dot{B}^{\frac{5}{2}}_{2,1}}\leq C(1+\|u_{0}\|_{\dot{H}^{-2\delta}}),
		\end{equation}
		where $C$ depends only on $\underline{\mu}$, $\overline{\mu}$, $\|\rho_0-1\|_{{B}_{2,1}^{\frac{3}{2}}}$ and $\|u_0\|_{\dot{B}^{\frac{1}{2}}_{2,1}}.$
	\end{Corollary}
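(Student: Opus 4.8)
The plan is to feed the local-in-time solution furnished by Lemma~\ref{Lemma-2} into the $\dot H^{-2\delta}$-propagation estimate of Proposition~\ref{Proposition-4.3}, and then to extract a good time slice $t_0$ from the $L^1$-in-time control of $u$ in $\dot B^{5/2}_{2,1}$. First I would record the output of Lemma~\ref{Lemma-2}: since $\|u_0\|_{\dot B^{1/2}_{2,1}}\le\varepsilon$ may be taken below the threshold $\varepsilon_0$ of that lemma, there is $T^*>1$ and a solution $(a,u,\nabla\pi)$ of \reff{a2} on $[0,T^*]\supset[0,1]$ in the stated spaces, and by keeping track of the constants in that construction one obtains
\[
\|u\|_{\widetilde{L}^{\infty}_{1}(\dot B^{\frac12}_{2,1})}+\|u\|_{L^{1}_{1}(\dot B^{\frac52}_{2,1})}+\|u_t\|_{L^{1}_{1}(\dot B^{\frac12}_{2,1})}+\|\nabla\pi\|_{L^{1}_{1}(\dot B^{\frac12}_{2,1})}\le C_0,
\]
with $C_0=C_0(\underline\mu,\overline\mu,\|\rho_0-1\|_{B^{3/2}_{2,1}},\|u_0\|_{\dot B^{1/2}_{2,1}})$. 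From the transport equation $\partial_t a+u\cdot\nabla a=0$ and the transport estimate (cf. Lemma~\ref{Lemma-2.4}, with $p=2$, $r=1$, using $\nabla u\in L^1_1(\dot B^{3/2}_{2,1}\cap L^\infty)$) I get $\|a\|_{\widetilde{L}^{\infty}_{1}(B^{3/2}_{2,1})}\le C_0$; and since by the definitions \reff{a3} and the hypothesis \reff{a6} the quantities $b$ and $\lambda$ are smooth functions of $a$ vanishing at $a=0$ (with $\mu(\cdot)\in W^{3,\infty}$), composition estimates and the algebra property of $B^{3/2}_{2,1}(\mathbb R^3)$ yield $\|(b,\lambda)\|_{\widetilde{L}^{\infty}_{1}(B^{3/2}_{2,1})}\le C_0$ as well. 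Finally, because $\rho$ is transported by the divergence-free field $u$, the relations $1+a=\rho^{-1}$ and $1+b=\mu(\rho)\rho^{-1}$ keep positive lower bounds $\underline a,\underline b$ depending only on $\inf\rho_0$, $\sup\rho_0$, $\underline\mu$ (recall $\rho_0\in L^\infty$ since $B^{3/2}_{2,1}\hookrightarrow L^\infty$ in $\mathbb R^3$, and $\rho_0$ is bounded below by hypothesis).

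Next I would apply Proposition~\ref{Proposition-4.3} on $[0,1]$ with the choice $v=u$. With this choice, \reff{c54} is precisely the momentum equation of \reff{a2}, and the embeddings $\dot B^{1/2}_{2,1}\hookrightarrow\dot B^0_{3,1}$ and $\dot B^{5/2}_{2,1}\hookrightarrow\dot B^2_{3,1}$ in $\mathbb R^3$, combined with the bounds of the first step, show $u\in\mathcal C([0,1];\dot B^0_{3,1})\cap L^1([0,1];\dot B^2_{3,1})$, so all hypotheses of the proposition hold. I would fix $c=\tfrac18$ in \reff{c55} and choose $k$ large enough that \reff{c55} is satisfied, which is possible because $a,b,\lambda\in\widetilde{L}^\infty_1(B^{3/2}_{2,1})$ have small high-frequency tails. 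Every quantity on the right-hand side of \reff{c56} — the $L^\infty_1(\dot B^0_{3,1})$ and $L^1_1(\dot B^2_{3,1})$ norms of $v=u$, and the $B^{3/2}_{2,1}$-norms of $a,b,\lambda$ — is then bounded by $C_0$, so \reff{c56} gives
\[
\|u\|_{\widetilde{L}^{\infty}_{1}(\dot H^{-2\delta})}+\|u\|_{\widetilde{L}^{1}_{1}(\dot H^{2(1-\delta)})}+\|\nabla\pi\|_{\widetilde{L}^{1}_{1}(\dot H^{-2\delta})}\le C\bigl(1+\|u_0\|_{\dot H^{-2\delta}}\bigr),
\]
with $C$ controlled by the data; in particular $\sup_{t\in[0,1]}\|u(t)\|_{\dot H^{-2\delta}}\le C(1+\|u_0\|_{\dot H^{-2\delta}})$.

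To conclude, since $\|u\|_{L^1_1(\dot B^{5/2}_{2,1})}\le C_0$, Chebyshev's inequality produces a subset of $]0,1[$ of positive measure on which $\|u(t_0)\|_{\dot B^{5/2}_{2,1}}\le 2C_0$; picking any such $t_0$ and combining with the uniform-in-time $\dot H^{-2\delta}$ bound just obtained gives $u(t_0)\in\dot H^{-2\delta}\cap\dot B^{5/2}_{2,1}$ together with \reff{c72}. I expect the main obstacle to lie in the second step: one must check with care that the hypotheses of Proposition~\ref{Proposition-4.3} genuinely hold on $[0,1]$ — in particular the uniform-in-time $B^{3/2}_{2,1}$ bounds on $a,b,\lambda$ with positive lower bounds, and the admissibility of the choice of $k$ ensuring \reff{c55} — and one must upgrade the (qualitative) local-existence statement of Lemma~\ref{Lemma-2} to the explicit bound $C_0$ expressed in terms of the data; once these are available, the remaining steps are routine bookkeeping together with the embeddings used above.
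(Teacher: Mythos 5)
Your proposal is correct and follows essentially the same route as the paper's own proof: you feed the local solution of Lemma \ref{Lemma-2} into Proposition \ref{Proposition-4.3} with the choice $v=u$, after checking via composition (paralinearization) estimates that $a,b,\lambda\in\widetilde{L}^{\infty}_{1}(B^{\frac{3}{2}}_{2,1})$ with the required lower bounds so that $k$ can be chosen large enough for \reff{c55}, and you then extract $t_{0}\in]0,1[$ from the $L^{1}_{t}(\dot{B}^{\frac{5}{2}}_{2,1})$ control. Your write-up is merely more explicit than the paper's (the Chebyshev selection of $t_{0}$, the embeddings $\dot{B}^{\frac{1}{2}}_{2,1}\hookrightarrow\dot{B}^{0}_{3,1}$ and $\dot{B}^{\frac{5}{2}}_{2,1}\hookrightarrow\dot{B}^{2}_{3,1}$), but the substance is identical.
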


	\begin{proof} 
		Thanks to Lemma \ref{Lemma-2}, we can find some $t_{0}\in (0,1)$ such that $u(t_{0})\in\dot{B}^{\frac{1}{2}}_{2,1}\cap\dot{B}^{\frac{5}{2}}_{2,1}$ and 
		$$
		\|u(t_{0})\|_{\dot{B}^{\frac{1}{2}}_{2,1}\cap\dot{B}^{\frac{5}{2}}_{2,1}}\leq C\|u_{0}\|_{\dot{B}^{\frac{1}{2}}_{2,1}}.
		$$
		To obtain that (\ref{c72}) holds, we need to resort to the result of the Lemma \ref{Lemma-2} and check that it satisfies the conditions of Proposition \ref{Proposition-4.3}, i.e., $b,$ $\lambda\in \widetilde{L}^{\infty}_{T}(B^{\frac{3}{2}}_{2,1}).$ By virtue of the definition of $b(a)$ and $\lambda(a)$ from (\ref{a3}), we have $b(0)=\lambda(0)=0.$ Thus, resorting to the Paralinearization theorem of \cite{2011BCD}, we can easily obtain
		\begin{align}
		\|(a,b,\lambda)\|_{\widetilde{L}^{\infty}_{1}(B^{\frac{3}{2}}_{2,1})}\lesssim\|a\|_{\widetilde{L}^{\infty}_{1}(B^{\frac{3}{2}}_{2,1})}\lesssim\|a_0\|_{B^{\frac{3}{2}}_{2,1}}\lesssim \|\rho_0-1\|_{B^{\frac{3}{2}}_{2,1}}.
		\end{align}
		Therefore, there exists an integer $k\in\mathbb{Z},$ such that, for any small positive constant $c$, such that
		\begin{align}
		\|(a-\dot{S}_{k}a,~b-\dot{S}_{k}b,~\lambda-\dot{S}_{k}\lambda)\|_{\widetilde{L}^{\infty}_{T}(B^{\frac{3}{2}}_{2,1})}\leq c.
		\end{align}
		from which we can conclude that combining Proposition \ref{Proposition-4.3} gives (\ref{c72}).

	\end{proof}
	\section{Global well-posedness of (\ref{a1})}
	The goal of this section is to prove the global well-posedness part of Theorem \ref{Theorem-1} provided that $\|u_{0}\|_{\dot{B}^{\frac{1}{2}}_{2,1}}$ is sufficiently small. As a convention in the remaining of this section, we shall always  denote $t_0$ to be the positive time determined by Corollary \ref{Corollary-3.1}. \\
	
	\noindent\textbf{Strategy of the proof of Theorem \ref{Theorem-1}:} Thanks to Lemma \ref{Lemma-2}, we conclude that: given $\rho_0-1\in B^{\frac{3}{2}}_{2,1}(\mathbb{R}^{3})$ and $u_{0}\in \dot{B}^{\frac{1}{2}}_{2,1}(\mathbb{R}^{3})$ with $\|u_{0}\|_{\dot{B}^{\frac{1}{2}}_{2,1}}$ sufficiently small, (\ref{a1}) has a unique local solution $(\rho, u)$ satisfying $\rho-1\in\mathcal{C}([0,T^{*});B^{\frac{3}{2}}_{2,1}(\mathbb{R}^{3}))$ and $u\in \mathcal{C}([0,T^{*});\dot{B}^{\frac{1}{2}}_{2,1}(\mathbb{R}^{3}))\cap L^{1}_{loc}([0,T^{*});\dot{B}^{\frac{5}{2}}_{2,1}(\mathbb{R}^{3}))$ for some $T^{*}\geq 1$, and we can find some $t_{0}\in (0,1)$ such that
	\begin{equation}\label{d1}
	\|u(t_{0})\|_{\dot{B}^{\frac{1}{2}}_{2,1}\cap\dot{B}^{\frac{5}{2}}_{2,1}}\leq C\|u_{0}\|_{\dot{B}^{\frac{1}{2}}_{2,1}}.
	\end{equation}
	Notice from (\ref{d1}) that $\|u(t_{0})\|_{\dot{B}^{\frac{1}{2}}_{2,1}\cap\dot{B}^{\frac{5}{2}}_{2,1}}$ is very small provided that $\|u_{0}\|_{\dot{B}^{\frac{1}{2}}_{2,1}}$ is sufficiently small. Moreover, applying Corollary \ref{Corollary-3.1} gives rise to
	\begin{equation}\label{d2}
	\begin{aligned}
	\|u(t_0)\|_{\dot{H}^{-2\delta}\cap \dot{B}^{\frac{5}{2}}_{2,1}}\lesssim1+\|u_{0}\|_{\dot{H}^{-2\delta}}.
	\end{aligned}
	\end{equation}
	\par Our aim of what follows is to prove that $T^{*}=\infty,$ we rewrite the $u$ equation as
	$$
	\Delta u-\nabla(\frac{\pi}{\mu(\rho)})=-\frac{2\mathbb{D} u\cdot\nabla\mu(\rho)}{\mu(\rho)}+\frac{\rho\partial_t u+ \rho u\cdot\nabla u}{\mu(\rho)}+\frac{\pi\nabla\mu(\rho)}{\mu(\rho)^{2}},
	$$
	the key ingredient here here is to find the time-independent bounds on the $L^{\infty}(0,\infty;L^{q})$ norm of $\nabla\mu(\rho)$ using the result of Lemma \ref{Lemma-5.1}. So it is necessary to present the \emph{a priori} $L^{1}([t_0,\infty);L^\infty(\mathbb{R}^{3}))$ estimate for $\nabla u$ is sufficiently small, which is a crucial part of the proof of Theorem \ref{Theorem-1}. The new ingredient here is to decompose the velocity fields $u$ into $v$ and $w$, where $v$ satisfies the 3D classical Navier-Stokes equations:
	\begin{equation}\label{dd1}
	\left\{\begin{array}{l}
	\partial_t v+v \cdot \nabla v-\Delta v+\nabla \pi_v=0, \\
	\operatorname{div} v=0, \\
	\left. v\right|_{t=t_0}=u\left(t_0\right),
	\end{array}\right.
	\end{equation}
	and then solve $w=u-v$ via
	\begin{equation}\label{dd2}
	\left\{\begin{array}{l}
	\partial_t \rho+\operatorname{div}(\rho(v+w))=0, \\
	\rho \partial_t w+\rho(v+w) \cdot \nabla w-\operatorname{div}(2\mu(\rho)\mathbb{D}w)+\nabla \pi_w\\
	=(1-\rho)\left(\partial_t v+v \cdot \nabla v\right)-\rho w \cdot \nabla v+\operatorname{div}(2(\mu(\rho)-1)\mathbb{D}v), \\
	\operatorname{div} w=0, \\
	\left.\rho\right|_{t=t_0}=\rho\left(t_0\right),\left.\quad w\right|_{t=t_0}=0,
	\end{array}\right.
	\end{equation}
	which can be reached through energy estimate in the $L^2$ framework. The detailed information of $v$ is presented in
	Proposition \ref{Proposition-5.1}, and that of $w$ is in Lemma \ref{Lemma-5.2} and Lemma \ref{Lemma-5.3}.
	\vskip 0.5cm
	\par In order to get the global solution of (\ref{dd1}), we need to recall the following Proposition \cite{2013AGZ}, where we omitted the proof.
	\begin{Proposition}\label{Proposition-5.1}(see Proposition 5.1 of \cite{2013AGZ}.) 
		Let $(v,\pi_{v})$ be a unqiue global solution of $(\ref{dd1})$ which satisfies $(\ref{a7})$. Then for $s\in[\frac{1}{2},\frac{5}{2}],$ there hold
		\begin{equation}\label{d3}
		\|v\|_{\widetilde{L}^{\infty}([t_0,\infty);\dot{B}^{s}_{2,1})}+\| v\|_{L^1([t_0,\infty);\dot{B}^{s+2}_{2,1})}\leq C\|u(t_0)\|_{\dot{B}_{2,1}^{s}}\leq C\|u_0\|_{\dot{B}^{\frac{1}{2}}_{2,1}}.
		\end{equation}
		and
		\begin{equation}\label{d4}
		\|\partial_t v\|_{L^\infty([t_0,\infty);\dot{B}^{\frac{1}{2}}_{2,1})}+\|\partial_t v\|_{L^{1}([t_0,\infty);\dot{B}^{\frac{5}{2}}_{2,1})}\leq C\|u(t_0)\|_{\dot{B}_{2,1}^{s}}\leq C\|u_0\|_{\dot{B}^{\frac{1}{2}}_{2,1}}.
		\end{equation} 
	\end{Proposition}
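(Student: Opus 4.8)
\textbf{Proof proposal for Proposition \ref{Proposition-5.1}.}

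The plan is to treat \eqref{dd1} as the classical $3$D Navier--Stokes system with small initial data $u(t_0)\in\dot B^{\frac12}_{2,1}$ and to run the standard Besov-space {\em a priori} estimate, exactly as in Proposition 5.1 of \cite{2013AGZ}, which is why the original paper omits the proof. First I would apply $\dot\Delta_j$ to the momentum equation of \eqref{dd1}, take the $L^2$ inner product with $\dot\Delta_j v$, and use the divergence-free condition together with the standard parabolic smoothing $2^{2j}\|\dot\Delta_j v\|_{L^2}\lesssim\|\dot\Delta_j v\|_{L^2}''$-type lower bound coming from $-\Delta$, to obtain
\begin{equation}
\frac{d}{dt}\|\dot\Delta_j v\|_{L^2}+c\,2^{2j}\|\dot\Delta_j v\|_{L^2}\lesssim \|\dot\Delta_j(v\cdot\nabla v)\|_{L^2}.
\end{equation}
Integrating in time and multiplying by $2^{js}$ then summing the $\ell^1$ norm over $j$ gives, for $s\in[\tfrac12,\tfrac52]$,
\begin{equation}
\|v\|_{\widetilde L^\infty_t(\dot B^s_{2,1})}+\|v\|_{L^1_t(\dot B^{s+2}_{2,1})}\lesssim \|u(t_0)\|_{\dot B^s_{2,1}}+\|v\cdot\nabla v\|_{L^1_t(\dot B^s_{2,1})}.
\end{equation}

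The next step is to absorb the nonlinear term. Using Bony's decomposition \eqref{a19} and the divergence-free structure (so that $v\cdot\nabla v=\operatorname{div}(v\otimes v)$), together with the product laws in Besov spaces, one gets for $s$ in the stated range an estimate of the schematic form
\begin{equation}
\|v\cdot\nabla v\|_{L^1_t(\dot B^s_{2,1})}\lesssim \|v\|_{L^\infty_t(\dot B^{\frac12}_{2,1})}\,\|v\|_{L^1_t(\dot B^{s+2}_{2,1})}+\|v\|_{L^1_t(\dot B^{\frac52}_{2,1})}\,\|v\|_{L^\infty_t(\dot B^{s}_{2,1})},
\end{equation}
where the endpoint $s=\tfrac52$ is exactly the reason $r=1$ is needed (the space $\dot B^{\frac32}_{2,1}\hookrightarrow L^\infty$ is used to close the high--high and low--high interactions). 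First I would run this for $s=\tfrac12$: a continuity/bootstrap argument on $[t_0,T]$ shows that if $\|u(t_0)\|_{\dot B^{\frac12}_{2,1}}\le C\|u_0\|_{\dot B^{\frac12}_{2,1}}\le\varepsilon$ is small enough, the quantity $X(T):=\|v\|_{\widetilde L^\infty_T(\dot B^{\frac12}_{2,1})}+\|v\|_{L^1_T(\dot B^{\frac52}_{2,1})}$ stays $\le 2C\|u_0\|_{\dot B^{\frac12}_{2,1}}$ for all $T$, giving global existence and the $s=\tfrac12$ case of \eqref{d3}. Then, with this global bound in hand, the estimate for general $s\in[\tfrac12,\tfrac52]$ becomes {\em linear} in $\|v\|_{\widetilde L^\infty_t(\dot B^s_{2,1})}+\|v\|_{L^1_t(\dot B^{s+2}_{2,1})}$ with small coefficient $X(\infty)$, so it closes directly and yields \eqref{d3} for the whole range.

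For \eqref{d4} I would differentiate the equation in time, or equivalently read $\partial_t v=\Delta v-\mathbb{P}(v\cdot\nabla v)-\nabla\pi_v$ directly: since $\mathbb{P}$ is bounded on every $\dot B^s_{2,1}$, we have $\|\partial_t v\|_{\dot B^{\frac12}_{2,1}}\lesssim\|v\|_{\dot B^{\frac52}_{2,1}}+\|v\cdot\nabla v\|_{\dot B^{\frac12}_{2,1}}$ and similarly with $\frac52$ in place of $\frac12$ on both sides; the product law plus \eqref{d3} then bounds the right-hand side in $L^\infty_t$ (resp. $L^1_t$) by $C\|u_0\|_{\dot B^{\frac12}_{2,1}}$. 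The main obstacle is purely the endpoint bookkeeping at $s=\tfrac52$: one must check the product estimate $\|v\cdot\nabla v\|_{\dot B^{\frac52}_{2,1}}\lesssim\|v\|_{\dot B^{\frac12}_{2,1}}\|v\|_{\dot B^{\frac92}_{2,1}}+\|v\|_{\dot B^{\frac72}_{2,1}}\|v\|_{\dot B^{\frac32}_{2,1}}$ (or a variant avoiding $\dot B^{\frac92}_{2,1}$ by keeping one derivative outside) actually holds with the $\ell^1$ summability and that the high-frequency part is time-integrable — everything else is the routine Navier--Stokes small-data argument, and since the statement is quoted verbatim from \cite{2013AGZ} the cleanest move is to cite that reference for the details, as the authors do.
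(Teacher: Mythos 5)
Your proposal is correct and matches the situation in the paper: the authors give no proof at all, simply quoting Proposition 5.1 of \cite{2013AGZ}, and your sketch is exactly the standard small-data Besov argument (bootstrap at $s=\tfrac12$, then linear propagation for $s\in[\tfrac12,\tfrac52]$, then reading $\partial_t v$ off the equation) on which that cited result rests. Only a cosmetic slip: once you apply $\mathbb{P}$, the identity is $\partial_t v=\Delta v-\mathbb{P}(v\cdot\nabla v)$ with no separate $\nabla\pi_v$ term; this does not affect the estimates.
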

	\vskip 0.5cm
	The goal of this section is to prove that the following Proposition holds, which is the most important ingredient used in the proof of Theorem \ref{Theorem-1}, i.e.,
	
	\begin{Proposition}\label{Proposition-5.5}
		Let $M\stackrel{\mathrm{ def }}{=}\|\nabla\mu(\rho_0)\|_{L^{q}}$ with $q\in]3,6[$, $n\in]2,\frac{6\delta_{-}}{1+\delta_{-}}[$ with $\delta\in]1/2,3/4[$ and $m\in(3,\min\{r,6\}).$ Then there exists some positive constant $\varepsilon,$ which depends only on $n,m,\bar{\mu}, \underline{\mu}$ and $M$ and $\|u_0\|_{\dot{H}^{-2\delta}}$ such that if $u$ and $w$ are the unique local strong solution of $(\ref{a1})$ and $(\ref{dd2})$ on $\mathbb{R}^{3}\times[t_0,T]$ respectively, and satisfying
		\begin{equation}\label{d39}
		\sup_{t\in[t_0,T]}\|\nabla\mu(\rho)\|_{L^{q}}\leq 4M\quad{\rm and}\quad
		\sup_{t\in[t_{0},T]}\|\nabla w\|^{2}_{L^{2}}\leq  4C\|u_{0}\|^{2}_{\dot{B}^{\frac{1}{2}}_{2,1}},
		\end{equation}
		then the following estimates hold:
		\begin{equation}\label{d40}
		\sup_{t\in[t_0,T]}\|\nabla\mu(\rho)\|_{L^{q}}\leq 2M\quad{\rm and}\quad
		\sup_{t\in[t_{0},T]}\|\nabla w\|^{2}_{L^{2}}\leq 2C\|u_{0}\|^{2}_{\dot{B}^{\frac{1}{2}}_{2,1}},
		\end{equation}
		provided that $\|u_0\|_{\dot{B}^{\frac{1}{2}}_{2,1}}\leq \varepsilon.$
	\end{Proposition}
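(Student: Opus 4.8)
\textbf{Proof plan for Proposition \ref{Proposition-5.5}.}
The strategy is a continuity/bootstrap argument: under the two a priori bounds in \eqref{d39} we must improve the constants from $4M$ to $2M$ and from $4C\|u_0\|_{\dot B^{1/2}_{2,1}}^2$ to $2C\|u_0\|_{\dot B^{1/2}_{2,1}}^2$, for $\|u_0\|_{\dot B^{1/2}_{2,1}}$ small enough. The core of the work is a closed energy estimate for $w$ in the $\dot H^1$ framework coupled with the transport estimate for $\nabla\mu(\rho)$ in $L^q$.

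\textbf{Step 1: basic $L^2$ energy estimate for $w$.} First I would test the second equation of \eqref{dd2} against $w$. Using $\operatorname{div}w=0$, $\rho\geq\underline a>0$ and $\mu(\rho)\geq\underline\mu>0$, the left side yields $\tfrac{d}{dt}\int\rho|w|^2 + \underline\mu\|\nabla w\|_{L^2}^2$ up to constants, while the right side is controlled by $\|1-\rho\|_{L^\infty}(\|\partial_t v\|+\|v\cdot\nabla v\|)$ in suitable norms, $\|\rho\|_{L^\infty}\|w\|_{L^6}\|\nabla v\|_{L^3}$ (absorb $\|w\|_{L^6}\lesssim\|\nabla w\|_{L^2}$ by Young), and $\|(\mu(\rho)-1)\mathbb Dv\|_{L^2}\|\nabla w\|_{L^2}$. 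Here Proposition \ref{Proposition-5.1} gives the time-integrable smallness of $v$, $\nabla v$, $\partial_t v$ in the relevant Besov norms, all bounded by $\|u_0\|_{\dot B^{1/2}_{2,1}}$; crucially $w(t_0)=0$, so Grönwall gives $\sup_t\|\sqrt\rho w\|_{L^2}^2+\int\underline\mu\|\nabla w\|_{L^2}^2 \lesssim \|u_0\|_{\dot B^{1/2}_{2,1}}^2$ times an exponential of integrable-and-small quantities.

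\textbf{Step 2: $\dot H^1$ estimate for $w$.} Next I would test against $\partial_t w$ (or, equivalently, apply the Stokes regularity Lemma \ref{Lemma-5.1} to the $w$-equation viewed as $-\operatorname{div}(2\mu(\rho)\mathbb Dw)+\nabla\pi_w = $ [everything else]). Testing against $\partial_t w$ produces $\int\rho|\partial_t w|^2 + \tfrac{d}{dt}\int\mu(\rho)|\mathbb Dw|^2$ on the left (modulo a term $\int\partial_t\mu(\rho)|\mathbb Dw|^2 = -\int\operatorname{div}(\mu(\rho)u)|\mathbb Dw|^2$ handled by the transport structure and $L^\infty$ bounds on $u$), and on the right the same forcing terms paired with $\partial_t w$, plus the convection term $\rho(v+w)\cdot\nabla w$ paired with $\partial_t w$. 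The delicate terms are the quadratic ones in $w$ and the viscosity term $\operatorname{div}(2(\mu(\rho)-1)\mathbb Dv)$; the latter needs $\|\nabla\mu(\rho)\|_{L^q}\leq 4M$ together with Lemma \ref{Lemma-5.1} to control $\|\nabla^2 v\|$ or to integrate by parts, and here one uses that $\mu(\rho)-1$ need not be small in $L^\infty$ but $\nabla\mu(\rho)$ is merely bounded — this is exactly where the paper's improvement over \cite{2015AZ} lives. Combining with Step 1 and choosing $\|u_0\|_{\dot B^{1/2}_{2,1}}\leq\varepsilon$ small, the $\dot H^1$ bound closes as $\sup_{t\in[t_0,T]}\|\nabla w\|_{L^2}^2\leq 2C\|u_0\|_{\dot B^{1/2}_{2,1}}^2$.

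\textbf{Step 3: transport estimate for $\nabla\mu(\rho)$ in $L^q$.} Since $\rho$ is transported by $u=v+w$, the quantity $g:=\nabla\mu(\rho)$ satisfies $\partial_t g + u\cdot\nabla g = -\nabla u\cdot g$, hence $\tfrac{d}{dt}\|g\|_{L^q}\leq \|\nabla u\|_{L^\infty}\|g\|_{L^q}$, giving $\|g(t)\|_{L^q}\leq M\exp\big(\int_{t_0}^t\|\nabla u\|_{L^\infty}\big)$. Thus everything reduces to showing $\int_{t_0}^T\|\nabla u\|_{L^\infty}\,d\tau\leq\log 2$. Write $\nabla u=\nabla v+\nabla w$; the $v$ part is $\lesssim\|v\|_{L^1_t(\dot B^{5/2}_{2,1})}\lesssim\|u_0\|_{\dot B^{1/2}_{2,1}}$ by Proposition \ref{Proposition-5.1} and $\dot B^{3/2}_{2,1}\hookrightarrow L^\infty$. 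For the $w$ part one interpolates $\|\nabla w\|_{L^\infty}$ between $\|\nabla w\|_{L^2}$ (from Steps 1--2, small) and $\|\nabla^2 w\|_{L^r}$ for some $r>3$, the latter obtained from the Stokes estimate \eqref{d25} of Lemma \ref{Lemma-5.1} applied to the $w$-equation with $F=$ [all lower-order and forcing terms], which is where the exponents $n\in]2,6\delta_-/(1+\delta_-)[$ and $m\in(3,\min\{r,6\})$ and the decay norm $\|u_0\|_{\dot H^{-2\delta}}$ (controlling decay of $v$ and hence time-integrability of the $F$ terms) enter. Putting these together yields $\sup_t\|\nabla\mu(\rho)\|_{L^q}\leq 2M$, closing the bootstrap.

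\textbf{Main obstacle.} The hard part is Step 2/Step 3, namely obtaining a time-global $L^1$-in-time bound on $\|\nabla w\|_{L^\infty}$ (equivalently the $\|\nabla^2 w\|_{L^r}$-type bound) that is small, without any smallness of $\|\mu(\rho_0)-1\|_{L^\infty}$: one must exploit the decay-in-time of $v$ (through the negative-regularity data $u_0\in\dot H^{-2\delta}$, cf. Proposition \ref{Proposition-4.3}) to make the forcing terms $(1-\rho)(\partial_t v+v\cdot\nabla v)$ and $\operatorname{div}(2(\mu(\rho)-1)\mathbb Dv)$ integrable in time, and balance the interpolation exponents so that the only non-small factors are the fixed data norms $M$, $\bar\mu$, $\underline\mu$, $\|u_0\|_{\dot H^{-2\delta}}$ while every factor carrying a power of $\|u_0\|_{\dot B^{1/2}_{2,1}}$ can be absorbed. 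The zero initial data $w(t_0)=0$ is essential throughout to avoid an uncontrolled constant; this is the point flagged before Lemma \ref{Lemma-5.3}.
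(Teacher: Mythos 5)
Your overall architecture matches the paper's: bootstrap, $L^2$ and $\dot H^1$ energy estimates for $w$ exploiting $w(t_0)=0$ (your Steps 1--2 are essentially Lemmas \ref{Lemma-5.2} and \ref{Lemma-5.3}), then a transport/Gr\"onwall estimate for $\nabla\mu(\rho)$ in $L^q$ reducing everything to a small bound for $\int_{t_0}^T\|\nabla u\|_{L^\infty}dt$. However, there is a genuine gap in your Step 3, which is precisely the quantitative core of the proposition. You propose to interpolate $\|\nabla w\|_{L^\infty}$ between $\|\nabla w\|_{L^2}$ (small) and $\|\nabla^2 w\|_{L^r}$, $r>3$, obtained from the Stokes estimate \eqref{d25} applied to the $w$-equation, and you assert that the time-integrability of the resulting right-hand side follows from the decay of $v$ (Proposition \ref{Proposition-5.1}) through the data norm $\|u_0\|_{\dot H^{-2\delta}}$. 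This is not sufficient: the forcing $F$ in the Stokes estimate contains $\rho w_t$ (resp.\ $\rho u_t$) and quadratic terms in $w$, and making $\int_{t_0}^T\|\nabla^2 w\|_{L^r}^{\,1-\theta}dt$ (or $\int_{t_0}^T\|\nabla^2 u\|_{L^r}dt$) finite uniformly in $T$ on an unbounded interval requires algebraic-in-time decay of the \emph{full} solution $u$ (hence of $w$), together with time-weighted bounds on $u_t$ and $\nabla u_t$. The decay of the classical Navier--Stokes part $v$ gives none of this, because $w$ is forced by non-decaying coefficients ($1-\rho$, $\mu(\rho)-1$) and by its own history.

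What the paper actually supplies, and what is missing from your plan, is this chain: a Schonbek-type Fourier-splitting argument for the full inhomogeneous, variable-viscosity system yielding $\|u(t)\|_{L^2}^2\lesssim\mathcal H_0\langle t\rangle^{-2\delta}$ from $u_0\in\dot H^{-2\delta}$ (Lemma \ref{Lemma-A.1}, Proposition \ref{Proposition-A.1}, Corollary \ref{Corollary-5.2}); then time-weighted estimates $t\|\nabla u\|_{L^2}^2$, $t\|u_t\|_{L^2}^2$, $t^2\|u_t\|_{L^2}^2$, $\int t^2\|\nabla u_t\|_{L^2}^2$ (Lemmas \ref{Lemma-5.4}--\ref{Lemma-5.5}); and finally the splitting $\|\nabla u\|_{L^\infty}\lesssim\|\nabla u\|_{L^p}^{\theta_1}\|\nabla^2 u\|_{L^r}^{1-\theta_1}$ with $p\in\,]2,\tfrac{6\delta_-}{1+\delta_-}[$, where the second factor is merely bounded (via Lemma \ref{Lemma-5.1} applied to the $u$-equation with $F=-\rho u_t-\rho u\cdot\nabla u$ and the weighted estimates above), and the smallness is carried by $\|\nabla u\|_{L^1_t(L^p)}\lesssim\|u_0\|_{\dot B^{1/2}_{2,1}}^{3(p-2)/(2p)}\exp\{C\mathcal H_0\}$, whose $L^2_t(L^6)$ ingredient is exactly where your $w$-estimates (Lemmas \ref{Lemma-5.2}--\ref{Lemma-5.3}) enter (Lemma \ref{Lemma-5.6}). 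You correctly flag this as the ``main obstacle,'' but you neither identify the need for decay of $u$ itself nor indicate how to produce it, so the bootstrap for $\sup_t\|\nabla\mu(\rho)\|_{L^q}\le 2M$ does not close as written.
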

	
	\par Before proving Proposition \ref{Proposition-5.5}, we establish some necessary \emph{a priori} estimates, see Lemmas {\ref{Lemma-u1}-\ref{Lemma-5.6}}. 
	\vskip 0.5cm
	\begin{Corollary}\label{Corollary-5.1}
		Assuming that $	\sup_{t\in[0,T]}\|\nabla\mu(\rho)\|_{L^{q}}\leq 4\|\nabla\mu(\rho_0)\|_{L^{q}}$ with $q\in(3,6)$ is valid, then there holds that
		\begin{equation}\label{dd36}
		\|\nabla^{2} u\|_{L^{2}}\leq C \|\rho u_t\|_{L^{2}}+C\|\nabla u\|_{L^{2}}+C\|\nabla u\|^{3}_{L^{2}},
		\end{equation}
		and
		\begin{equation}\label{d36}
		\|\nabla^{2} w\|_{L^{2}}\leq C\|\rho w_t\|_{L^{2}}+C\|\nabla w\|_{L^{2}}+C\|\nabla w\|^{3}_{L^{2}}+C\|v\|_{\dot{B}^{s}_{2,1}},
		\end{equation}
		where $C$ depends only on $\underline{\mu}$, $\overline{\mu}$, $q$ and $\|\nabla\mu(\rho_0)\|_{L^{q}}$. 
	\end{Corollary}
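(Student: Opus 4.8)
The plan is to read both inequalities off the stationary Stokes estimate \eqref{d24} of Lemma \ref{Lemma-5.1}, applied once to the momentum equation for $u$ and once to that for $w$. Throughout I would use three standing facts: that $\rho$ solves a transport equation with divergence-free drift, so that $0<\underline{\rho}\le\rho(t,x)\le\bar\rho$ on $[0,T]$ with bounds inherited from $\rho_0$; that the standing hypothesis $\sup_{[0,T]}\|\nabla\mu(\rho)\|_{L^{q}}\le 4\|\nabla\mu(\rho_0)\|_{L^{q}}$ turns the factor $\|\nabla\mu(\rho)\|_{L^{q}}^{q/(q-3)}$ appearing in \eqref{d24} into a harmless constant $C=C(\underline{\mu},\bar\mu,q,\|\nabla\mu(\rho_0)\|_{L^q})$; and that $\dot H^{1}(\mathbb{R}^{3})\hookrightarrow L^{6}$ together with Gagliardo--Nirenberg gives $\|\nabla f\|_{L^{3}}\lesssim\|\nabla f\|_{L^{2}}^{1/2}\|\nabla^{2} f\|_{L^{2}}^{1/2}$.

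For \eqref{dd36}: using the continuity equation, the momentum equation in \eqref{a1} is equivalent to the Stokes system $-\operatorname{div}(2\mu(\rho)\mathbb{D}u)+\nabla\pi=F$ with $F:=-\rho u_t-\rho\,u\cdot\nabla u\in L^{2}$, so \eqref{d24} yields $\|\nabla^{2}u\|_{L^{2}}\lesssim\|\rho u_t\|_{L^{2}}+\|\rho\,u\cdot\nabla u\|_{L^{2}}+\|\nabla u\|_{L^{2}}$. By the $L^{\infty}$ bound on $\rho$ together with Sobolev embedding and Gagliardo--Nirenberg,
\[
\|\rho\,u\cdot\nabla u\|_{L^{2}}\lesssim\|u\|_{L^{6}}\|\nabla u\|_{L^{3}}\lesssim\|\nabla u\|_{L^{2}}^{3/2}\|\nabla^{2}u\|_{L^{2}}^{1/2}\le\tfrac12\|\nabla^{2}u\|_{L^{2}}+C\|\nabla u\|_{L^{2}}^{3},
\]
and absorbing $\tfrac12\|\nabla^{2}u\|_{L^{2}}$ into the left-hand side gives \eqref{dd36}.

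For \eqref{d36}: from \eqref{dd2} the $w$-momentum equation takes the Stokes form $-\operatorname{div}(2\mu(\rho)\mathbb{D}w)+\nabla\pi_w=F_w$ with
\[
F_w:=-\rho w_t-\rho(v+w)\cdot\nabla w+(1-\rho)(v_t+v\cdot\nabla v)-\rho\,w\cdot\nabla v+\operatorname{div}\big(2(\mu(\rho)-1)\mathbb{D}v\big).
\]
The first thing to note is that the last, divergence-form, term is not manifestly in $L^{2}$; so I would rewrite it, using $\operatorname{div}v=0$, as $(\mu(\rho)-1)\Delta v+2\nabla\mu(\rho)\cdot\mathbb{D}v$, whose $L^{2}$ norm is $\lesssim\|\Delta v\|_{L^{2}}+\|\nabla\mu(\rho)\|_{L^{q}}\|\nabla v\|_{L^{q^{\ast}}}$ with $\tfrac1{q^{\ast}}=\tfrac12-\tfrac1q$, hence $\lesssim\|v\|_{\dot B^{s}_{2,1}}$ for a suitable $s\in[\tfrac12,\tfrac52]$ by $\dot B^{0}_{2,1}\hookrightarrow L^{2}$ and $\dot B^{3/q}_{2,1}\hookrightarrow L^{q^{\ast}}$. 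Then \eqref{d24} gives $\|\nabla^{2}w\|_{L^{2}}\lesssim\|F_w\|_{L^{2}}+\|\nabla w\|_{L^{2}}$. The nonlinear piece $\|\rho\,w\cdot\nabla w\|_{L^{2}}\lesssim\|\nabla w\|_{L^{2}}^{3/2}\|\nabla^{2}w\|_{L^{2}}^{1/2}$ splits by Young's inequality into $\tfrac18\|\nabla^{2}w\|_{L^{2}}+C\|\nabla w\|_{L^{2}}^{3}$, and the mixed pieces $\|\rho\,v\cdot\nabla w\|_{L^{2}}\lesssim\|v\|_{L^{\infty}}\|\nabla w\|_{L^{2}}$, $\|\rho\,w\cdot\nabla v\|_{L^{2}}\lesssim\|w\|_{L^{6}}\|\nabla v\|_{L^{3}}$ and $\|(1-\rho)(v_t+v\cdot\nabla v)\|_{L^{2}}\lesssim\|v_t\|_{L^{2}}+\|v\|_{L^{\infty}}\|\nabla v\|_{L^{2}}$ are all dominated by $C\|v\|_{\dot B^{s}_{2,1}}+C\|\nabla w\|_{L^{2}}$, using $\dot B^{3/2}_{2,1}\hookrightarrow L^{\infty}$, the pointwise-in-time bound $\|v_t\|_{L^{2}}\lesssim\|v\|_{\dot B^{s}_{2,1}}$ read off from the $v$-equation after applying the Leray projection, and the global bound $\|v\|_{\widetilde{L}^{\infty}([t_0,\infty);\dot B^{s}_{2,1})}\le C\|u_0\|_{\dot B^{1/2}_{2,1}}$ of Proposition \ref{Proposition-5.1}. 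Collecting these and absorbing $\tfrac18\|\nabla^{2}w\|_{L^{2}}$ into the left-hand side yields \eqref{d36}.

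The only genuinely delicate point is the divergence-form source $\operatorname{div}(2(\mu(\rho)-1)\mathbb{D}v)$: since $\mu(\rho)-1$ is merely bounded one cannot feed this term straight into the $L^{2}$ Stokes estimate, and the Leibniz rewriting above is precisely where the weaker hypothesis $\nabla\mu(\rho_0)\in L^{q}$ (rather than smallness of $\|\mu(\rho_0)-1\|_{L^{\infty}}$) has to be made to work, via the full $\dot B^{s}_{2,1}$-regularity of $v$ provided by Proposition \ref{Proposition-5.1}; the convective terms are then routine Sobolev/Gagliardo--Nirenberg/Young manipulations.
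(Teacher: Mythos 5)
Your proposal is correct and follows essentially the same route as the paper: both apply the Stokes estimate \eqref{d24} of Lemma \ref{Lemma-5.1} to the momentum equations for $u$ and for $w$, use the standing bound on $\|\nabla\mu(\rho)\|_{L^q}$ to absorb the viscosity factor, rewrite $\operatorname{div}(2(\mu(\rho)-1)\mathbb{D}v)$ by Leibniz as $(\mu(\rho)-1)\Delta v+2\nabla\mu(\rho)\cdot\mathbb{D}v$, and close with Gagliardo--Nirenberg, Young, and the $\dot B^{s}_{2,1}$ bounds on $v$ from Proposition \ref{Proposition-5.1}. The only (immaterial) differences are bookkeeping ones, e.g. you control $(1-\rho)(\partial_t v+v\cdot\nabla v)$ via $\|1-\rho\|_{L^\infty}$ and the $v$-equation, whereas the paper uses $\|1-\rho_0\|_{L^6}$ together with $L^3$ norms of $\partial_t v$ and $\nabla v$.
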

	\begin{proof} 
		The momentum equations $(\ref{a1})_2$ can be rewritten as follows,
		\begin{equation}
		-\operatorname{div}(2\mu(\rho)\mathbb{D}u)+\nabla\pi=-\rho u_t-\rho u\cdot\nabla u.
		\end{equation}
		Thanks to Lemma \ref{Lemma-5.1}, we achieve
		\begin{equation}
		\begin{aligned}
		\|\nabla^{2} u\|_{L^{2}}
		&\lesssim \|\rho u_t\|_{L^{2}}+\|\nabla u\|_{L^{2}}+\|u\|_{L^6}\|\nabla u\|_{L^2}\\
		&\lesssim \|\rho u_t\|_{L^{2}}+\|\nabla u\|_{L^{2}}+\|\nabla u\|^{\frac{3}{2}}_{L^2}\|\nabla u\|^{\frac{1}{2}}_{L^2}.
		\end{aligned}
		\end{equation}
		Thus, by virtue of Young's inequality, we get (\ref{dd36}).
		\par Similarly, we rewrite $(\ref{a12})_2$ as follows,
		\begin{equation}\label{d37}
		\begin{aligned}
		-\operatorname{div}(\mu(\rho)\mathbb{D} w)+\nabla \pi=&-\rho\partial_{t} w-\rho(v+w)\cdot\nabla w\\&+(1-\rho)(\partial_{t}v+v\cdot\nabla v)-\rho w\cdot\nabla v+ \operatorname{div}\left(2(\mu(\rho)-1)\mathbb{D}v\right),
		\end{aligned}
		\end{equation}
		from which and using Lemma \ref{Lemma-5.1}, we get that
		\begin{equation}
		\begin{aligned}
		\|\nabla^{2}w\|_{L^{2}}&\lesssim \|\nabla w\|_{L^{2}}+\|\rho\partial_{t} w\|_{L^{2}}+\|\rho(v+w)\cdot\nabla w\|_{L^{2}}\\ &\quad+\|\rho w\cdot\nabla v\|_{L^{2}}+\|(1-\rho)(\partial_{t}v+v\cdot\nabla v)\|_{L^{2}}\\&\quad+\|\operatorname{div}\left(2(\mu(\rho)-1)\mathbb{D}v\right)\|_{L^{2}}.
		\end{aligned}
		\end{equation}
		Thanks to the Gagliardo-Nirenberg inequality, one has
		\begin{equation}\label{d38}
		\begin{aligned}
		\|\nabla^{2}w\|_{L^{2}}
		&\lesssim \|\nabla w\|_{L^{2}}+\|\rho\partial_{t} w\|_{L^{2}}+\|\nabla w\|^{\frac{3}{2}}_{L^2}\|\nabla^{2} w\|^{\frac{1}{2}}_{L^{2}}\\
		&\quad+\|u_0\|_{\dot{B}^{\frac{1}{2}}_{2,1}}\|\nabla w\|_{L^{2}}+\|1-\rho_0\|_{L^6}\left\{\|\partial_{t} v\|_{L^{3}}+\|v\|_{L^\infty}\|\nabla v\|_{L^{3}} \right\}\\
		&\quad+\|\mu(\rho_0)-1\|_{L^6}\|\Delta v\|_{L^3}+\|\nabla v\|_{L^{\frac{2q}{q-2}}}\|\nabla \mu(\rho)\|_{L^q}.
		\end{aligned}
		\end{equation}
		By Young's inequality, for $s_1\in[\frac{3}{2},\frac{5}{2}],$ we can deduce 
		$$ \|\nabla^{2} w\|_{L^{2}}\lesssim \|\rho\partial_{t} w\|_{L^{2}}+\|\nabla w\|_{L^{2}}+\|\nabla w\|^{3}_{L^{2}}+\|v\|_{\dot{B}^{s_1}_{2,1}}.$$
	\end{proof}
	
	\subsection{$L^2$ estimate of $u$}  
	\begin{Lemma}\label{Lemma-u1}
		Suppose $(\rho,u,\pi)$ is the unique local strong solution to $(\ref{a1})$ satisfying $(\ref{a7})$, then it holds that
		\begin{equation}\label{dd38}
		\sup_{t\in[t_0,T]}\int_{\mathbb{R}^3}\rho|u|^{2}dx+\int^{T}_{t_0}\int_{\mathbb{R}^3}|\nabla u|^{2}dxdt\leq C\|u(t_0)\|^{2}_{L^{2}},
		\end{equation}
		where $C$ depends on $\overline{\mu}$, $\underline{\mu}$, $M$, $\|\rho_0-1\|_{B^{\frac{3}{2}}_{2,1}}$ and $\|u_0\|_{\dot{H}^{-2\delta}}$.
	\end{Lemma}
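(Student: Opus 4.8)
The statement is the basic $L^2$ energy estimate for the full system \eqref{a1}, starting from the time $t_0$ produced by Corollary \ref{Corollary-3.1}. The natural route is to test the momentum equation $\partial_t(\rho u)+\operatorname{div}(\rho u\otimes u)-\operatorname{div}(2\mu(\rho)\mathbb{D}u)+\nabla\pi=0$ against $u$ and integrate over $\mathbb{R}^3$. Using the mass equation $\partial_t\rho+\operatorname{div}(\rho u)=0$ together with $\operatorname{div}u=0$, the convective and time-derivative terms combine into $\tfrac{1}{2}\tfrac{d}{dt}\int_{\mathbb{R}^3}\rho|u|^2\,dx$ in the usual way; the pressure term vanishes after integration by parts since $\operatorname{div}u=0$; and the viscous term yields $\int_{\mathbb{R}^3}2\mu(\rho)|\mathbb{D}u|^2\,dx$, which by Korn's inequality (together with $\operatorname{div}u=0$, so that $\int|\mathbb{D}u|^2=\tfrac12\int|\nabla u|^2$) and the lower bound $\mu(\rho)\ge\underline{\mu}$ from \eqref{a6} is bounded below by $c\,\underline{\mu}\|\nabla u\|_{L^2}^2$. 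Integrating in time from $t_0$ to $t$ then gives
\[
\frac{1}{2}\int_{\mathbb{R}^3}\rho|u|^2\,dx\Big|_{t}+c\,\underline{\mu}\int_{t_0}^{t}\|\nabla u\|_{L^2}^2\,d\tau
\leq \frac{1}{2}\int_{\mathbb{R}^3}\rho(t_0)|u(t_0)|^2\,dx.
\]
Since $\rho(t_0)$ is bounded (it stays in $1+B^{3/2}_{2,1}\hookrightarrow L^\infty$, indeed $\|\rho(t_0)\|_{L^\infty}\lesssim 1+\|\rho_0-1\|_{B^{3/2}_{2,1}}$, using that the transport equation preserves $L^\infty$ bounds on $\rho$), the right-hand side is $\lesssim\|u(t_0)\|_{L^2}^2$, which is exactly \eqref{dd38}.

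The one point that needs a little care is the justification that the formal computation is legitimate and that $u\in L^2$ on $[t_0,T]$ in the first place. A priori the local solution from Lemma \ref{Lemma-2} lives in $\dot B^{1/2}_{2,1}\cap\dot B^{5/2}_{2,1}$, which does not by itself contain $L^2=\dot B^0_{2,2}$; this is precisely why Corollary \ref{Corollary-3.1}/Proposition \ref{Proposition-4.3} were invoked to propagate $\dot H^{-2\delta}$ regularity with $\delta\in(1/2,3/4)$. With $u\in\widetilde L^\infty_t(\dot H^{-2\delta})\cap\widetilde L^1_t(\dot H^{2(1-\delta)})$ and $-2\delta<0<2(1-\delta)$, interpolation places $u$ in, say, $L^2_t(\dot H^{1-2\delta})$ and in $\widetilde L^\infty_t(\dot H^{-2\delta})$, and combined with the $\dot B^{1/2}_{2,1}$ bound this gives $u(t_0)\in L^2$ and $u\in C([t_0,T];L^2)$ together with $\nabla u\in L^2_t(L^2)$, so that the energy identity holds rigorously (by a standard approximation/regularization argument, or by working with the $\dot\Delta_j$-truncated equation and summing). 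The constant $C$ in \eqref{dd38} then depends only on $\underline{\mu}$ (via Korn), on $\overline{\mu}$ and $M=\|\nabla\mu(\rho_0)\|_{L^q}$ only through the auxiliary regularity bounds invoked, and on $\|\rho_0-1\|_{B^{3/2}_{2,1}}$ and $\|u_0\|_{\dot H^{-2\delta}}$ through the bound on $\|\rho(t_0)\|_{L^\infty}$ and the propagation estimates.

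The main (modest) obstacle is therefore not the energy estimate itself — that is completely classical — but rather making sure the solution class is rich enough for it to make sense, i.e. tying together the outputs of Lemma \ref{Lemma-2}, Corollary \ref{Corollary-3.1} and Proposition \ref{Proposition-4.3} to conclude $u\in C([t_0,T];L^2)$ with a time-uniform bound, and keeping track of which data the constant $C$ depends on. Once that bookkeeping is in place, the proof is a two-line integration by parts plus Gronwall (here Gronwall is trivial since the right-hand side has no $u$-dependent term after the viscous term is moved to the left).
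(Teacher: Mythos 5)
Your proposal is correct and follows essentially the same route as the paper: the paper's proof is exactly the $L^2$ energy identity obtained by testing $(\ref{a1})_2$ with $u$, using $\operatorname{div}u=0$ and $\mu(\rho)\geq\underline{\mu}$, and integrating over $[t_0,T]$. Your additional remarks on why $u(t_0)\in L^2$ (via the $\dot H^{-2\delta}$ propagation of Corollary \ref{Corollary-3.1}) and on the $L^\infty$ bound for $\rho$ are sound bookkeeping that the paper leaves implicit, not a different argument.
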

	\begin{proof}
		Taking the $L^{2}$ inner product of $(\ref{a1})_{2}$ with $u$ and using the fact $\operatorname{div} u=0,$ we obtain
		\begin{equation}\label{d35}
		\frac{1}{2}\frac{d}{dt}\|\sqrt{\rho}u\|^{2}_{L^{2}}+2\int_{\mathbb{R}^{3}}\mu(\rho)\mathbb{D}u:\mathbb{D}udx=0.
		\end{equation}
		Integrating in time over~$[t_{0}, t]$~yields
		\begin{equation}
		\|\sqrt{\rho}u\|^{2}_{L^{\infty}([t_{0},T];L^{2})}+\|\nabla u\|^{2}_{L^{2}([t_{0},T];L^{2})}\leq C\|u(t_0)\|^{2}_{L^{2}}.
		\end{equation}
		This completes the proof of Lemma \ref{Lemma-u1}.
	\end{proof}
	
	\subsection{$\dot{H}^1$ estimate of $u$} 
	
	If we do the $\dot{H}^1$-normal estimate of $u$ directly, then the estimate cannot be closed, so here we use the idea of decomposition, i.e., $u=v+w$, so that we get not only the smallness for the $L^\infty([t_0,\infty);H^1)$ estimate of $w$, but also the smallness for the $L^\infty([t_0,\infty);L^2)$ estimate of $\nabla u$. More deeply, the smallness of the $H^1$ norm estimate of $w$ plays a crucial role in later estimating the smallness for the $L^{1}([t_0,\infty);L^\infty)$ estimate of $\nabla u$. As a convention, we will always denote $s_1\in[\frac{3}{2},\frac{5}{2}]$ in the rest of this section. For simplicity, in what follows we will only present the \emph{a priori} estimates for sufficiently smooth solutions of (\ref{dd2}) on $[0,T^{*}[.$

	\begin{Lemma}\label{Lemma-5.2}
		Suppose $(\rho,w,\pi)$ is the unique local strong solution to $(\ref{a12})$ satisfying $(\ref{a7}).$ Then under the assumptions of Proposition \ref{Proposition-5.5}, we have 
		\begin{equation}\label{d41}
		\sup_{t\in[t_0,T]}\int_{\mathbb{R}^3}|w|^{2}dx+\int^{T}_{t_0}\int_{\mathbb{R}^3}|\nabla w|^{2}dxdt\leq C\|u_{0}\|^2_{\dot{B}^{\frac{1}{2}}_{2,1}},
		\end{equation}
		where $C$ being independent of $t$.
	\end{Lemma}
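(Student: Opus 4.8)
\textbf{Proof plan for Lemma \ref{Lemma-5.2}.}
The plan is to run a standard $L^2$ energy estimate on the $w$-equation \eqref{a12}, using the zero initial data $w|_{t=t_0}=0$ together with the smallness of $u(t_0)$ in $\dot B^{\frac12}_{2,1}$ (hence of $v$, via Proposition \ref{Proposition-5.1}) to absorb the forcing terms. First I would take the $L^2$ inner product of the momentum equation in \eqref{a12} with $w$. Using $\operatorname{div} w=0$, $\operatorname{div}(\rho(v+w))=-\partial_t\rho$ and the mass equation, the transport term $\rho(v+w)\cdot\nabla w$ together with $\rho\partial_t w$ combines into $\frac12\frac{d}{dt}\|\sqrt\rho w\|_{L^2}^2$, exactly as in \eqref{d35}. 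The viscous term gives $2\int \mu(\rho)\mathbb{D}w:\mathbb{D}w\,dx\gtrsim \underline{\mu}\|\nabla w\|_{L^2}^2$ by Korn's inequality (for divergence-free fields $\|\mathbb{D}w\|_{L^2}^2=\tfrac12\|\nabla w\|_{L^2}^2$). So the left side is $\frac12\frac{d}{dt}\|\sqrt\rho w\|_{L^2}^2+\underline{\mu}\|\nabla w\|_{L^2}^2$, and it remains to bound the three forcing terms on the right side of \eqref{a12} paired against $w$.

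The three forcing contributions are estimated as follows. For $\int\operatorname{div}(2(\mu(\rho)-1)\mathbb{D}v)\cdot w\,dx=-2\int(\mu(\rho)-1)\mathbb{D}v:\nabla w\,dx$, I bound $|\mu(\rho)-1|\le \bar\mu-1$ pointwise (or use $\|\mu(\rho)-1\|_{L^\infty}\lesssim 1$ from \eqref{a6}) and Cauchy--Schwarz to get $\lesssim \|\nabla v\|_{L^2}\|\nabla w\|_{L^2}$, then Young: $\le \tfrac{\underline\mu}{8}\|\nabla w\|_{L^2}^2+C\|\nabla v\|_{L^2}^2$. For $\int(1-\rho)(\partial_t v+v\cdot\nabla v)\cdot w\,dx$, I use $\|1-\rho\|_{L^\infty}\lesssim 1+\|\rho_0-1\|_{B^{3/2}_{2,1}}$ (propagated by the transport equation, or simply $\rho$ bounded since $\mu(\rho)$ is bounded away from $0$ and $\infty$ and $\mu$ is a diffeomorphism), Hölder, and Sobolev $\|w\|_{L^6}\lesssim\|\nabla w\|_{L^2}$ to get $\lesssim \|\partial_t v+v\cdot\nabla v\|_{L^{6/5}}\|\nabla w\|_{L^2}$, absorbing $\tfrac{\underline\mu}{8}\|\nabla w\|_{L^2}^2$ and leaving $C\|\partial_t v+v\cdot\nabla v\|_{L^{6/5}}^2$; alternatively work in $L^2$ with $\|1-\rho\|_{L^3}$ and $\|\partial_t v+v\cdot\nabla v\|_{L^2}$, keeping $\|\nabla w\|_{L^2}$ from $\|w\|_{L^6}$. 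For the term $-\int\rho\,(w\cdot\nabla v)\cdot w\,dx$, I bound it by $\|\rho\|_{L^\infty}\|w\|_{L^3}\|\nabla v\|_{L^3}\|w\|_{L^6}\lesssim \|\nabla v\|_{L^3}\|w\|_{L^2}^{1/2}\|\nabla w\|_{L^2}^{3/2}$ via interpolation $\|w\|_{L^3}\lesssim\|w\|_{L^2}^{1/2}\|\nabla w\|_{L^2}^{1/2}$ and $\|w\|_{L^6}\lesssim\|\nabla w\|_{L^2}$; Young then yields $\le\tfrac{\underline\mu}{8}\|\nabla w\|_{L^2}^2+C\|\nabla v\|_{L^3}^4\|w\|_{L^2}^2$. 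Here $\|\nabla v\|_{L^3}^4\in L^1_t$ since $\|\nabla v\|_{L^3}\lesssim\|v\|_{\dot B^{3/2}_{2,1}}$ and Proposition \ref{Proposition-5.1} gives $v\in L^1([t_0,\infty);\dot B^{5/2}_{2,1})\cap L^\infty([t_0,\infty);\dot B^{3/2}_{2,1})$, so $\|v\|_{\dot B^{3/2}_{2,1}}^4\lesssim \|v\|_{L^\infty_t\dot B^{3/2}_{2,1}}^3\|v\|_{\dot B^{3/2}_{2,1}}\in L^1_t$ — wait, more carefully $\|v\|_{\dot B^{3/2}_{2,1}}\in L^\infty_t$ so $\|v\|_{\dot B^{3/2}_{2,1}}^4\in L^\infty_t$, but I actually want an $L^1_t$ bound; use instead $\|\nabla v\|_{L^3}\lesssim\|\nabla^2 v\|_{L^2}^{1/2}\|\nabla v\|_{L^2}^{1/2}$ and note $\nabla^2 v\in L^2_t L^2$, $\nabla v\in L^\infty_t L^2$ by Proposition \ref{Proposition-5.1}, giving $\|\nabla v\|_{L^3}^2\in L^1_t$, hence bound this term differently to keep an $L^1_t$ coefficient multiplying $\|w\|_{L^2}^2$; in any case all the $v$-norms appearing are finite with smallness controlled by $\|u_0\|_{\dot B^{1/2}_{2,1}}$.

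Collecting terms, after absorbing the $\tfrac{\underline\mu}{8}\|\nabla w\|_{L^2}^2$ pieces into the left side, I arrive at a differential inequality of the form
\begin{equation}
\frac{d}{dt}\|\sqrt\rho w\|_{L^2}^2+\underline\mu\|\nabla w\|_{L^2}^2\le g(t)\|w\|_{L^2}^2+h(t),
\end{equation}
with $\int_{t_0}^{\infty} g\,dt\lesssim 1$ and $\int_{t_0}^{\infty} h\,dt\lesssim \|u_0\|_{\dot B^{1/2}_{2,1}}^2$ (using $\|\nabla v\|_{L^2_t L^2}^2$, $\|\partial_t v+v\cdot\nabla v\|_{L^2_t\cap L^{6/5}_t}$ etc., all $\lesssim\|u_0\|_{\dot B^{1/2}_{2,1}}$ from \eqref{d3}--\eqref{d4}). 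Since $\rho$ has a positive lower bound, $\|w\|_{L^2}^2\simeq\|\sqrt\rho w\|_{L^2}^2$; Grönwall's lemma with $w(t_0)=0$ then gives $\sup_{t\in[t_0,T]}\|w\|_{L^2}^2+\int_{t_0}^T\|\nabla w\|_{L^2}^2\,dt\le C\|u_0\|_{\dot B^{1/2}_{2,1}}^2$ with $C$ independent of $T$. The main obstacle is bookkeeping: making sure every time-weight multiplying $\|w\|_{L^2}^2$ is integrable in time on $[t_0,\infty)$ (so that Grönwall gives a $T$-uniform constant) and that every inhomogeneous term carries the quadratic smallness factor $\|u_0\|_{\dot B^{1/2}_{2,1}}^2$ rather than just a finite bound — this forces the particular choice of Lebesgue exponents above and the systematic use of $\|\rho_0-1\|_{L^\infty}$, $\|\mu(\rho)-1\|_{L^\infty}$ being $O(1)$ (not small), which is precisely the point of the paper.
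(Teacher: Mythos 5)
Your overall strategy is the paper's: an $L^2$ energy estimate on the $w$-equation, exploiting $w|_{t=t_0}=0$ and the smallness of $v$ from Proposition \ref{Proposition-5.1}, followed by Gr\"onwall. However, there is a concrete gap in your treatment of the viscosity term $\int 2(\mu(\rho)-1)\mathbb{D}v:\nabla w\,dx$. Bounding it by $\|\mu(\rho)-1\|_{L^\infty}\|\nabla v\|_{L^2}\|\nabla w\|_{L^2}$ leaves, after Young, the inhomogeneous contribution $\int_{t_0}^{T}\|\nabla v\|_{L^2}^2\,dt$ in $h(t)$, and this quantity is \emph{not} bounded by $C\|u_0\|_{\dot{B}^{\frac{1}{2}}_{2,1}}^2$. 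Proposition \ref{Proposition-5.1} controls $v$ in $\widetilde{L}^\infty_t(\dot{B}^{\frac{1}{2}}_{2,1})\cap L^1_t(\dot{B}^{\frac{5}{2}}_{2,1})$, which by interpolation gives $\nabla v\in L^4_t(L^2)$ with small norm, but not $\nabla v\in L^2_t(L^2)$ with small norm (the heat-flow scaling $\|\nabla v(t)\|_{L^2}\sim t^{-1/4}$ is exactly borderline-nonintegrable when squared over $[t_0,\infty)$ for data only in $\dot{H}^{\frac12}$-type spaces). The only $T$-uniform bound available is the energy equality, which gives $\int_{t_0}^{T}\|\nabla v\|_{L^2}^2\,dt\le\tfrac12\|u(t_0)\|_{L^2}^2\lesssim(1+\|u_0\|_{\dot{H}^{-2\delta}})^2$ — finite but not small. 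With this choice your scheme only yields $\sup_t\|w\|_{L^2}^2+\int\|\nabla w\|_{L^2}^2\,dt\lesssim 1+\|u_0\|^2_{\dot{H}^{-2\delta}}$, which is weaker than (\ref{d41}) and destroys the quadratic smallness factor that the bootstrap needs later (e.g. (\ref{d50})--(\ref{d51}) and Step 2 of Lemma \ref{Lemma-5.6}). The paper avoids this by a different H\"older splitting: it writes the term as $\|1-\rho_0\|_{L^6}\|\nabla v\|_{L^3}\|\nabla w\|_{L^2}$ (using $\rho_0-1\in B^{\frac{3}{2}}_{2,1}\hookrightarrow L^6$, conservation of Lebesgue norms of $\rho-1$ under the divergence-free transport, and $\mu\in W^{3,\infty}$), and then $\|\nabla v\|_{L^3}\lesssim\|v\|_{\dot{B}^{\frac{3}{2}}_{2,1}}$, which \emph{does} lie in $L^2_t$ with norm $\lesssim\|u_0\|_{\dot{B}^{\frac{1}{2}}_{2,1}}$; the exponent choice is where the smallness is won, precisely because $\|\mu(\rho)-1\|_{L^\infty}$ carries no smallness in this paper.

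A secondary point: your primary bound for the term $\int(1-\rho)(\partial_t v+v\cdot\nabla v)\cdot w\,dx$ via $\|\partial_t v+v\cdot\nabla v\|_{L^{6/5}}$ is not justified by the cited estimates — Proposition \ref{Proposition-5.1} gives only $L^2$-based Besov control of $v$, and no $L^p$ information with $p<2$ is propagated for $v$. Your stated alternative, $\|1-\rho\|_{L^3}\|\partial_t v+v\cdot\nabla v\|_{L^2}\|w\|_{L^6}$, does work (with $\|\partial_t v+v\cdot\nabla v\|_{L^2}\lesssim\|v\|_{\dot{B}^{2}_{2,1}}+\|v\|_{\dot{B}^{\frac{3}{2}}_{2,1}}\|v\|_{\dot{B}^{1}_{2,1}}\in L^2_t$ small) and is in the same spirit as the paper's $L^2\times L^3\times L^6$ splitting. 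The $\rho\,w\cdot\nabla v$ term is fine as you treat it: $\|\nabla v\|_{L^3}^4\le\|\nabla v\|_{L^\infty_t(L^3)}^2\|\nabla v\|_{L^3}^2\in L^1_t$, so the Gr\"onwall factor is harmless (the paper instead keeps $\|\nabla v\|_{L^\infty}\|\sqrt{\rho}w\|_{L^2}^2$ with $\|\nabla v\|_{L^\infty}\in L^1_t$).
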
	
	\begin{proof}
		Firstly thanks to $\underline{\rho}<\rho_0<\overline{\rho}$, one deduces from the transport equation of (\ref{a1}) that
		$$\underline{\rho}\leq \rho(x,t)\leq \overline{\rho},$$
		from which, we get by using standard energy estimate to the $w$ equation of (\ref{a12}) that
		$$
		\begin{aligned}
		\frac{1}{2}\frac{d}{dt}\|\sqrt{\rho}w&\|^{2}_{L^2}+\|\nabla w\|^{2}_{L^2}
		\leq \left|\int_{\mathbb{R}^{3}}(1-\rho)(\partial_{t}v+v\cdot\nabla v)\cdot wdx \right|\\&+\left| \int_{\mathbb{R}^{3}}\rho w \cdot\nabla v\cdot wdx\right|+\left| \int_{\mathbb{R}^{3}}2(\mu(\rho)-1)\mathbb{D}v:\mathbb{D}wdx\right|. 
		\end{aligned}          
		$$
		Thus, we deduce that
		\begin{equation}\label{d42}
		\begin{aligned}
		& \frac{1}{2}\frac{d}{dt}\|\sqrt{\rho}w\|^{2}_{L^2}+\|\nabla w\|^{2}_{L^2}\\
		\leq &\|1-\rho_0\|_{L^2} \|\partial_{t}v+v\cdot\nabla v\|_{L^3}\|w\|_{L^6}\\
		&+\|\sqrt{\rho}w\|^2_{L^2}\|\nabla v\|_{L^\infty}+\|\nabla w\|_{L^2}\|1-\rho_0\|_{L^6}\|\nabla v\|_{L^3}\\
		\leq &\frac{1}{2}\|\nabla w\|^2_{L^2}+C\|\sqrt{\rho}w\|^2_{L^2}\|\nabla v\|_{L^\infty}+C\|v\|^{2}_{\dot{B}^{s_1}_{2,1}},
		\end{aligned}          
		\end{equation}
		where we use $\|\partial_{t} v\|_{L^{3}}\leq \|v\|_{L^3}\|\nabla v\|_{L^\infty}+\|\Delta v\|_{L^3}\leq C\|v\|_{\dot{B}^{\frac{5}{2}}_{2,1}}.$ Hence, one has
		\begin{equation}\label{d43}
		\begin{aligned}
		\frac{d}{dt}\|\sqrt{\rho}w\|^{2}_{L^2}+\|\nabla w\|^{2}_{L^2}
		&\leq C\|\nabla v\|_{L^\infty}\|\sqrt{\rho}w\|^2_{L^2}+C\|v\|^2_{\dot{B}^{s_1}_{2,1}}.
		\end{aligned}
		\end{equation}
		Integrating in time over~$[t_{0},T]$~yields
		$$
		\begin{aligned}
		\|w\|^{2}_{L^{\infty}([t_{0},T];L^{2})}+\|\nabla w\|^{2}_{L^{2}([t_{0},T];L^{2})}\leq C \|u_{0}\|^2_{\dot{B}^{\frac{1}{2}}_{2,1}}.
		\end{aligned}
		$$
		This completes the proof of Lemma \ref{Lemma-5.2}.
	\end{proof}
	
	\begin{Lemma}\label{Lemma-5.3}
		Suppose $(\rho,w,\pi)$ is the unique local strong solution to $(\ref{a12})$ satisfying $(\ref{a7}).$ Then it holds that
		\begin{equation}\label{d46}
		\begin{aligned}
		\int^{T}_{t_{0}}\int_{\mathbb{R}^{3}}\rho|w_{t}|^{2}dxdt+\sup_{t\in[t_{0},T]}\int_{\mathbb{R}^{3}}|\nabla w|^{2} dx\leq 2C\|u_{0}\|^{2}_{\dot{B}^{\frac{1}{2}}_{2,1}},
		\end{aligned}
		\end{equation}
		where $C$ being independent of $t$.
	\end{Lemma}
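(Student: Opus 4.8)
The plan is to perform the standard $\dot H^1$-type energy estimate on the $w$-equation \eqref{a12}, that is, to take the $L^2$ inner product of the momentum equation with $\partial_t w$. First I would write
$$
\int_{\mathbb{R}^3}\rho|w_t|^2\,dx+\frac{d}{dt}\int_{\mathbb{R}^3}\mu(\rho)\mathbb{D}w:\mathbb{D}w\,dx
= -\int_{\mathbb{R}^3}\rho(v+w)\cdot\nabla w\cdot w_t\,dx+(\text{RHS terms})\cdot w_t,
$$
where on the right-hand side the forcing terms are $(1-\rho)(\partial_t v+v\cdot\nabla v)$, $-\rho w\cdot\nabla v$ and $\operatorname{div}(2(\mu(\rho)-1)\mathbb{D}v)$, and I would also need to track the term $\int \partial_t\mu(\rho)\,\mathbb{D}w:\mathbb{D}w$ coming from differentiating the viscosity weight (using $\partial_t\mu(\rho)=-u\cdot\nabla\mu(\rho)$ from the transport equation). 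The crucial structural point, flagged in the paper's remark that ``the zero initial velocity of system \eqref{a12} is very essential,'' is that since $w|_{t=t_0}=0$ the time-integrated left-hand side gives exactly $\int_{\mathbb{R}^3}\mu(\rho)|\mathbb{D}w|^2\,dx$ at time $t$ with no initial contribution, so after using $\underline{\mu}\le\mu(\rho)$ and Korn's inequality I obtain a clean bound on $\sup_t\|\nabla w\|_{L^2}^2$ plus $\int\rho|w_t|^2$.

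The key steps, in order, are: (1) absorb the term $\int\rho(v+w)\cdot\nabla w\cdot w_t$ by Hölder and Gagliardo--Nirenberg, writing it as $\lesssim(\|v\|_{L^\infty}+\|w\|_{L^\infty})\|\nabla w\|_{L^2}\|\sqrt\rho w_t\|_{L^2}$ and putting the $w_t$ factor into $\tfrac14\int\rho|w_t|^2$, with $\|w\|_{L^\infty}\lesssim\|\nabla w\|_{L^2}^{1/2}\|\nabla^2 w\|_{L^2}^{1/2}$; (2) control the forcing terms paired with $w_t$ similarly, using $\|1-\rho_0\|_{L^6}$, $\|\mu(\rho_0)-1\|_{L^6}$ and Proposition \ref{Proposition-5.1} to bound $\|\partial_t v\|$, $\|v\|$, $\|\nabla v\|$, $\|\Delta v\|$ by $\|v\|_{\dot B^{s_1}_{2,1}}$, which is in $L^1_t$ by \eqref{d3}--\eqref{d4}; (3) handle the commutator-type term $\int u\cdot\nabla\mu(\rho)\,|\mathbb{D}w|^2\,dx\lesssim\|\nabla u\|_{L^\infty}$-type or $\|\nabla\mu(\rho)\|_{L^q}\|\nabla w\|_{L^2}\|\nabla w\|_{L^{2q/(q-2)}}$-type bound, again absorbing the top derivative of $w$ via Corollary \ref{Corollary-5.1}'s estimate \eqref{d36}; (4) invoke \eqref{d36} to replace every occurrence of $\|\nabla^2 w\|_{L^2}$ by $C(\|\rho w_t\|_{L^2}+\|\nabla w\|_{L^2}+\|\nabla w\|_{L^2}^3+\|v\|_{\dot B^{s_1}_{2,1}})$, absorbing $\|\rho w_t\|_{L^2}$ once more into the dissipation; (5) assemble a differential inequality of the form $\frac{d}{dt}\|\nabla w\|_{L^2}^2+c\int\rho|w_t|^2\le G(t)\,\|\nabla w\|_{L^2}^2+H(t)$ with $G,H\in L^1([t_0,\infty))$ whose integrals are $O(\|u_0\|_{\dot B^{1/2}_{2,1}})$ (using Lemma \ref{Lemma-5.2} and Proposition \ref{Proposition-5.1}), and apply Gronwall together with the bootstrap hypotheses \eqref{d39} to close with constant $2C$.

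The main obstacle I expect is the interplay between the nonlinear terms containing $\|\nabla^2 w\|_{L^2}$ and the dissipative term: every cubic or higher term (such as $\|w\|_{L^\infty}\|\nabla w\|_{L^2}\|\sqrt\rho w_t\|_{L^2}$ and the viscosity-variation term) forces a use of \eqref{d36}, which itself contains $\|\rho w_t\|_{L^2}$ and $\|\nabla w\|_{L^2}^3$, so one has to be careful that the repeated absorptions do not consume more than the available $\tfrac12\|\nabla w\|_{L^2}^2$-type or $\tfrac12\int\rho|w_t|^2$-type room; this is exactly where the smallness $\|\nabla w\|_{L^2}^2\le 4C\|u_0\|_{\dot B^{1/2}_{2,1}}^2$ from \eqref{d39} and the smallness of $\|u_0\|_{\dot B^{1/2}_{2,1}}$ are used to make the cubic terms subcritical. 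A secondary technical point is that $w$ does not solve a constant-viscosity Stokes system, so one cannot simply use classical $\dot H^1$ Stokes regularity; the substitute is Lemma \ref{Lemma-5.1}/Corollary \ref{Corollary-5.1}, at the cost of the extra term $\|v\|_{\dot B^{s_1}_{2,1}}$, which fortunately is $L^1$ in time by Proposition \ref{Proposition-5.1}.
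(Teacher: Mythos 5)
Your proposal follows essentially the same route as the paper: testing the $w$-momentum equation with $w_t$, exploiting $w|_{t=t_0}=0$ so no initial $\|\nabla w\|_{L^2}$ term appears, handling the viscosity-variation term via $\partial_t\mu(\rho)=-(v+w)\cdot\nabla\mu(\rho)$, replacing $\|\nabla^2 w\|_{L^2}$ through Corollary \ref{Corollary-5.1}, and closing by Gronwall using $\int\|\nabla w\|_{L^2}^4\,dt\le\sup_t\|\nabla w\|_{L^2}^2\int\|\nabla w\|_{L^2}^2\,dt$, which is small by the bootstrap hypothesis \eqref{d39} together with Lemma \ref{Lemma-5.2} and the smallness of $\|u_0\|_{\dot B^{1/2}_{2,1}}$. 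This matches the paper's argument, so the plan is correct as stated.
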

	\begin{proof} 
		Multiplying the momentum equations $(\ref{a12})_2$ by $w_{t}$ and integrating over $\mathbb{R}^{3}$ yield
		\begin{equation}\label{d47}
		\begin{aligned}
		&\int_{\mathbb{R}^{3}}\rho| w_{t}|^{2}dx+\frac{d}{dt}\int_{\mathbb{R}^{3}}\mu(\rho)\mathbb{D}w:\mathbb{D}w dx\\
		\leq &\left|\int_{\mathbb{R}^{3}}(1-\rho)(\partial_{t}v+v\cdot\nabla v)\cdot \partial_{t}w dx \right|+\left|\int_{\mathbb{R}^{3}}\rho (w\cdot\nabla v)\cdot w_{t}dx \right|\\&+\left|\int_{\mathbb{R}^{3}}\operatorname{div}(2(\mu(\rho)-1)\mathbb{D}v)\cdot \partial_{t}w dx\right|+\left|\int_{\mathbb{R}^{3}}\partial_{t}\mu(\rho)\mathbb{D}w:\mathbb{D}w dx\right|\\&+\left|\int_{\mathbb{R}^{3}}\rho (v+w)\cdot\nabla w\cdot w_{t} dx\right|.
		\end{aligned}
		\end{equation}
		\par Applying Gagliardo-Nirenberg inequality,
		$$\begin{aligned}
		\left|\int_{\mathbb{R}^{3}}(1-\rho)(\partial_{t}v+v\cdot\nabla v)\cdot\partial_{t}w dx \right|&\leq
		\| a_0\|_{L^6}\|\partial_{t}v+v\cdot\nabla v\|_{L^{3}}\|\sqrt{\rho}w_t\|_{L^2}\\
		&\leq \frac{1}{8}\|\sqrt{\rho}w_t\|^{2}_{L^2}+C\|v\|^{2}_{\dot{B}^{s_1}_{2,1}},
		\end{aligned}$$
		and
		$$\begin{aligned}
		\left|\int_{\mathbb{R}^{3}}\rho (w\cdot\nabla v)\cdot w_{t}dx \right|
		&\leq\|\sqrt{\rho}w_t\|_{L^{2}}\|w\|_{L^{6}}\|\nabla v\|_{L^{3}}\\
		&\leq \frac{1}{8}\|\sqrt{\rho}w_t\|^{2}_{L^2}+C\|\nabla w\|^{2}_{L^2}\|\nabla v\|^{2}_{L^3}.
		\end{aligned}$$
		Similarly,
		$$\begin{aligned}
		\left|\int_{\mathbb{R}^{3}}\operatorname{div}(2(\mu(\rho)-1)\mathbb{D}v)\cdot\partial_{t}w dx\right|
		\leq &\|(\mu(\rho)-1)\Delta v+\nabla\mu(\rho)\cdot\mathbb{D}v\|_{L^2}\|\sqrt{\rho}w_t\|_{L^2}\\
		\leq &\left\{\|\mu(\rho)-1\|_{L^6}\|\Delta v\|_{L^3}+\|\nabla v\|_{L^\frac{2q}{q-2}}\|\nabla\mu(\rho)\|_{L^q}\right\}\|\sqrt{\rho}w_t\|_{L^2}\\
		\leq& \frac{1}{8}\|\sqrt{\rho}w_t\|^{2}_{L^2}+C(\|\rho_0-1\|_{L^6},M)\|v\|^{2}_{\dot{B}^{s_1}_{2,1}},
		\end{aligned}$$
		where we require $q\in(3,6)$, and along the same way,
		$$\begin{aligned}
		&\left|\int_{\mathbb{R}^{3}}\partial_{t}\mu(\rho)\mathbb{D}w:\mathbb{D}w dx\right|\leq\left|\int_{\mathbb{R}^{3}}(v+w)\cdot\nabla\mu(\rho)\mathbb{D}w:\mathbb{D}w dx\right|\\
		\leq& \|\nabla \mu(\rho)\|_{L^q}\|\nabla w\|_{L^\frac{2q}{q-2}}\|\nabla w\|_{L^2}\left( \|w\|_{L^\infty}+\|v\|_{L^\infty}\right)\\
		\leq& C\|\nabla \mu(\rho)\|_{L^q}\|\nabla w\|^{\frac{q-3}{q}}_{L^2}\|\nabla^2 w\|^{\frac{3}{q}}_{L^2}\|\nabla w\|_{L^2}\left( \|w\|_{L^\infty}+\|v\|_{L^\infty}\right)\\
		\leq& C\|\nabla \mu(\rho)\|_{L^q}(\|\nabla w\|_{L^2}+\|\nabla^2 w\|_{L^2})\|\nabla w\|_{L^2}\left( \|w\|_{L^\infty}+\|v\|_{L^\infty}\right)\\
		\leq& C(M)\|\nabla w\|^{\frac{5}{2}}_{L^2}\|\nabla^{2}w\|^{\frac{1}{2}}_{L^2}+C(M)\|\nabla w\|^{\frac{3}{2}}_{L^2}\|\nabla^{2}w\|^{\frac{3}{2}}_{L^2}\\
		&\quad+C(M)\|\nabla w\|^{2}_{L^2}\|v\|_{L^\infty}+C(M)\|\nabla w\|_{L^2}\|\nabla^{2}w\|_{L^2}\|v\|_{L^\infty}.
		\end{aligned}$$
		Here we have used the fact that
		$$\partial_{t}\mu(\rho)+(v+w)\cdot\nabla \mu(\rho)=0,$$
		which is a consequence of mass equation and the fact $\operatorname{div} u=0.$ Notice that
		$$\begin{aligned}
		&\left|\int_{\mathbb{R}^{3}}\rho (v+w)\cdot\nabla w\cdot w_{t} dx\right|
		\\\leq &\|\sqrt{\rho}w_t\|_{L^2}\left\{ \|\nabla w\|_{L^3}\|w\|_{L^6}+\|\nabla w\|_{L^2}\|v\|_{L^\infty}\right\}\\
		\leq &\|\sqrt{\rho}w_t\|_{L^2}\|\nabla w\|^{\frac{3}{2}}_{L^2}\|\nabla^{2}w\|^{\frac{1}{2}}_{L^2}+\|\sqrt{\rho}w_t\|_{L^2}\|\nabla w\|_{L^2}\|v\|_{L^\infty}.
		\end{aligned}$$
		Hence, by Young's inequality and Corollary \ref{Corollary-5.1},
		\begin{equation}\label{d48}
		\begin{aligned}
		&\int_{\mathbb{R}^{3}}\rho| w_{t}|^{2}dx+\frac{d}{dt}\int_{\mathbb{R}^{3}}\mu(\rho)\mathbb{D}w:\mathbb{D}w dx\\
		\leq& \frac{1}{2}\|\sqrt{\rho}w_t\|^{2}_{L^2}+\|\nabla w\|^{\frac{3}{2}}_{L^2}\left\{\|\rho\partial_{t} w\|_{L^{2}}+\|\nabla w\|_{L^{2}}+\|\nabla w\|^{3}_{L^{2}}+\|v\|_{\dot{B}^{s_1}_{2,1}}\right\}^{\frac{3}{2}}\\&+C\|\nabla w\|_{L^2}\|v\|_{L^\infty}\left\{\|\rho\partial_{t} w\|_{L^{2}}+\|\nabla w\|_{L^{2}}+\|\nabla w\|^{3}_{L^{2}}+\|v\|_{\dot{B}^{s_1}_{2,1}}\right\}\\
		&+C\|\sqrt{\rho}w_t\|_{L^2}\|\nabla w\|^{\frac{3}{2}}_{L^2}\left\{\|\rho\partial_{t} w\|_{L^{2}}+\|\nabla w\|_{L^{2}}+\|\nabla w\|^{3}_{L^{2}}+\|v\|_{\dot{B}^{s_1}_{2,1}}\right\}^{\frac{1}{2}}\\
		&+C\|\nabla w\|^{2}_{L^2}\| v\|^{2}_{\dot{B}^{s_1}_{2,1}}+\|\nabla w\|^3_{L^2}+C\|v\|^{2}_{\dot{B}^{s_1}_{2,1}},
		\end{aligned}
		\end{equation}
		which yields that
		\begin{equation}\label{d49}
		\begin{aligned}
		&\int_{\mathbb{R}^{3}}\rho| w_{t}|^{2}dx+\frac{d}{dt}\int_{\mathbb{R}^{3}}\mu(\rho)\mathbb{D}w:\mathbb{D}w dx\\
		\leq &\frac{7}{8}\|\sqrt{\rho}w_t\|^{2}_{L^2}+C\|\nabla w\|^{6}_{L^2}+C\|\nabla w\|^{4}_{L^2}+C\|\nabla w\|^{3}_{L^2}\\&+C\|v\|^{2}_{\dot{B}^{s_1}_{2,1}}+C\|\nabla w\|^{2}_{L^2}\| v\|^{2}_{\dot{B}^{s_1}_{2,1}}.
		\end{aligned}
		\end{equation} 
		Thus, we obatin
		\begin{equation}\label{dd50}
		\begin{aligned}
		\int_{\mathbb{R}^{3}}\rho| w_{t}|^{2}dx+\frac{d}{dt}\int_{\mathbb{R}^{3}}\mu(\rho)\mathbb{D}w:\mathbb{D}w dx
		\leq& C\|\nabla w\|^{6}_{L^2}+C\|\nabla w\|^{2}_{L^2}+C\|v\|^{2}_{\dot{B}^{s_1}_{2,1}}.
		\end{aligned}
		\end{equation}
		Integrating with respect to time on $[t_{0}, T]$ gives
		$$
		\begin{aligned}
		&\int^{T}_{t_{0}}\int_{\mathbb{R}^{3}}\rho| w_{t}|^{2}dxdt+\sup_{t\in[t_{0},T]}\int_{\mathbb{R}^{3}}|\nabla w|^{2} dx\\
		\leq &C\int^{T}_{t_{0}}\|\nabla w\|^{6}_{L^{2}}dt+C\int^{T}_{t_{0}}\|\nabla w\|^{2}_{L^{2}}dt+C\int^{T}_{t_{0}}\|v\|^{2}_{\dot{B}^{s_1}_{2,1}}dt.
		\end{aligned}
		$$
		Applying Gronwall's inequality,
		$$
		\begin{aligned}
		&\int^{T}_{t_{0}}\int_{\mathbb{R}^{3}}\rho| w_{t}|^{2}dxdt+\sup_{t\in[t_{0},T]}\int_{\mathbb{R}^{3}}|\nabla w|^{2} dx
		\leq C\|u_0\|^{2}_{\dot{B}^{\frac{1}{2}}_{2,1}}\cdot\exp\{ C\int^{T}_{t_{0}}\|\nabla w\|^{4}_{L^{2}}dt\}.
		\end{aligned}
		$$
		Such that if
		\begin{equation}\label{d50}
		\|u_{0}\|_{\dot{B}^{\frac{1}{2}}_{2,1}}\leq \varepsilon_{1}\quad{\rm and}\quad	\sup_{t\in[t_{0},T]}\|\nabla w\|^{2}_{L^{2}}\leq  4C\|u_{0}\|^{2}_{\dot{B}^{\frac{1}{2}}_{2,1}}\leq 1,
		\end{equation}
		then
		\begin{equation}\label{d51}
		\begin{aligned}
		\int^{T}_{t_{0}}\|\nabla w\|^{4}_{L^{2}}dt&\leq \sup_{t\in[t_{0},T]}\|\nabla w\|^{2}_{L^{2}}\cdot\int^{T}_{t_{0}}\|\nabla w\|^{2}_{L^{2}}dt\\&
		\leq 4C\|u_{0}\|^{4}_{\dot{B}^{\frac{1}{2}}_{2,1}}\leq \|u_{0}\|^{2}_{\dot{B}^{\frac{1}{2}}_{2,1}}.
		\end{aligned}
		\end{equation}
		Hence, we arrive at
		\begin{equation}\label{d52}
		\begin{aligned}
		\int^{T}_{t_{0}}\int_{\mathbb{R}^{3}}\rho| w_{t}|^{2}dxdt+\sup_{t\in[t_{0},T]}\int_{\mathbb{R}^{3}}|\nabla w|^{2} dx\leq 2C\|u_{0}\|^{2}_{\dot{B}^{\frac{1}{2}}_{2,1}}.
		\end{aligned}
		\end{equation}
		Choose some small positive constant $	\varepsilon_{1}=\min\{\sqrt{\frac{1}{4C}}, \sqrt{\frac{\ln2}{C}}\}$, which is clear that (\ref{d52}) holds, provided (\ref{d50}) holds.
	\end{proof}
	
	\begin{Lemma}\label{Lemma-u}
		Suppose $(\rho,u,\pi)$ is the unique local strong solution to $(\ref{a1})$ satisfying $(\ref{a7}).$ Then it holds that
		\begin{equation}\label{u}
		\begin{aligned}
		\int^{T}_{t_{0}}\int_{\mathbb{R}^{3}}|u_{t}|^{2}dxdt+\sup_{t\in[t_{0},T]}\int_{\mathbb{R}^{3}}|\nabla u|^{2} dx\leq 2C\|u_{0}\|^{2}_{\dot{B}^{\frac{1}{2}}_{2,1}},
		\end{aligned}
		\end{equation}
		where $C$ being independent of $t$.
	\end{Lemma}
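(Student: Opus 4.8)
The plan is to recover the estimate for $u = v+w$ by combining the bounds already obtained for $v$ in Proposition \ref{Proposition-5.1} with those for $w$ in Lemma \ref{Lemma-5.2} and Lemma \ref{Lemma-5.3}. Since $u = v + w$ and $u_t = v_t + w_t$, we simply split
$$
\|\nabla u\|_{L^\infty_t(L^2)} \leq \|\nabla v\|_{L^\infty_t(L^2)} + \|\nabla w\|_{L^\infty_t(L^2)},
\qquad
\|u_t\|_{L^2_t(L^2)} \leq \|v_t\|_{L^2_t(L^2)} + \|w_t\|_{L^2_t(L^2)}.
$$
For the $v$-contributions, Proposition \ref{Proposition-5.1} with $s = \tfrac{1}{2}$ gives $\|v\|_{\widetilde L^\infty([t_0,\infty);\dot B^1_{2,1})} \lesssim \|u_0\|_{\dot B^{1/2}_{2,1}}$, and since $\dot B^1_{2,1} \hookrightarrow \dot H^1$ this controls $\|\nabla v\|_{L^\infty_t(L^2)}$; similarly $\|\partial_t v\|_{L^1([t_0,\infty);\dot B^{5/2}_{2,1})} \lesssim \|u_0\|_{\dot B^{1/2}_{2,1}}$ together with the $L^\infty$-in-time bound on $\partial_t v$ in $\dot B^{1/2}_{2,1}$ yields, by interpolation, an $L^2$-in-time bound $\|v_t\|_{L^2_t(L^2)} \lesssim \|u_0\|_{\dot B^{1/2}_{2,1}}$ (using $\dot B^{3/2}_{2,1} \hookrightarrow \dot H^{3/2}$ at the interpolated regularity, or directly $\|v_t\|_{L^2_t(\dot H^1)} \lesssim \|v_t\|_{L^\infty_t(\dot H^{1/2})}^{1/2}\|v_t\|_{L^1_t(\dot H^{5/2})}^{1/2}$, and then that $v_t \in L^2_t(L^2)$ as well up to the harmless low-frequency part — more carefully one should work directly with $\|v_t\|_{L^2_t(L^2)}$ via the Duhamel/energy estimate for the Navier–Stokes equation \eqref{dd1}).

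For the $w$-contributions, Lemma \ref{Lemma-5.3} already delivers exactly
$$
\int_{t_0}^{T}\!\!\int_{\mathbb{R}^3}\rho|w_t|^2\,dx\,dt + \sup_{t\in[t_0,T]}\int_{\mathbb{R}^3}|\nabla w|^2\,dx \leq 2C\|u_0\|_{\dot B^{1/2}_{2,1}}^2,
$$
and since $\rho \geq \underline\rho > 0$ this controls $\|w_t\|_{L^2_t(L^2)}^2$. Combining these four bounds and using $(a+b)^2 \leq 2a^2 + 2b^2$, we obtain
$$
\int_{t_0}^T\!\!\int_{\mathbb{R}^3}|u_t|^2\,dx\,dt + \sup_{t\in[t_0,T]}\int_{\mathbb{R}^3}|\nabla u|^2\,dx \leq C\|u_0\|_{\dot B^{1/2}_{2,1}}^2,
$$
which is \eqref{u} after absorbing constants (the precise constant $2C$ in the statement is then a matter of bookkeeping, adjusting $\varepsilon$ if necessary).

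The step I expect to require the most care is the $L^2_t(L^2)$ bound on $v_t$: Proposition \ref{Proposition-5.1} is phrased in homogeneous Besov norms $\dot B^{1/2}_{2,1}$ and $\dot B^{5/2}_{2,1}$, and a plain $L^2$ bound in $x$ and $t$ does not follow from homogeneous norms at a single regularity without an interpolation argument across the time variable. The clean route is to interpolate $\|v_t\|_{L^2_t(\dot H^1)} \lesssim \|v_t\|_{L^\infty_t(\dot H^{1/2})}^{1/2}\|v_t\|_{L^1_t(\dot H^{5/2})}^{1/2}$ — but then one still needs $v_t \in L^2_t(L^2)$ rather than $L^2_t(\dot H^1)$. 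The simplest fix is to note that from \eqref{dd1}, $v_t = \Delta v - \mathbb{P}(v\cdot\nabla v)$, and estimate $\|\Delta v\|_{L^2_t(L^2)}$ and $\|v\cdot\nabla v\|_{L^2_t(L^2)}$ directly: $\|\Delta v\|_{L^2_t(L^2)} \lesssim \|v\|_{L^2_t(\dot B^2_{2,1})} \lesssim \|v\|_{L^\infty_t(\dot B^1_{2,1})}^{1/2}\|v\|_{L^1_t(\dot B^3_{2,1})}^{1/2} \lesssim \|u_0\|_{\dot B^{1/2}_{2,1}}$ by Proposition \ref{Proposition-5.1} with $s=1$, and $\|v\cdot\nabla v\|_{L^2} \lesssim \|v\|_{L^6}\|\nabla v\|_{L^3} \lesssim \|\nabla v\|_{L^2}\|v\|_{\dot B^{3/2}_{2,1}}^{1/2}\|v\|_{\dot B^{1/2}_{2,1}}^{1/2}$ or similar, which is again integrable in time in $L^2$ by the same two estimates. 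Everything else is a direct citation of already-established lemmas.
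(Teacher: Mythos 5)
Your proposal is correct and follows essentially the same strategy as the paper: split $u=v+w$, take the $w$-part verbatim from Lemma \ref{Lemma-5.3} together with $\rho\geq\underline{\rho}>0$, and control the $v$-part entirely through Proposition \ref{Proposition-5.1}. The only (minor) variation is the $\|v_t\|_{L^2_t(L^2)}$ step: the paper tests \eqref{dd1} with $v_t$, absorbs $\tfrac12\|v_t\|^2_{L^2}$, and bounds $\int\|\nabla v\|^2_{L^2}\|v\|^2_{L^\infty}\,dt$ by Proposition \ref{Proposition-5.1}, whereas you read $v_t=\Delta v-\mathbb{P}(v\cdot\nabla v)$ off the equation and estimate both terms by the same proposition, which is an equally valid and essentially equivalent argument.
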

	\begin{proof}
		Multiplying the classical Navier-Stokes equation $(\ref{a11})_1$ by $v_{t}$ and integrating over $\mathbb{R}^{3}$ yield
		\begin{equation}\label{u1}
		\begin{aligned}
		\int_{\mathbb{R}^{3}}| v_{t}|^{2}dx+\frac{1}{2}\frac{d}{dt}\int_{\mathbb{R}^{3}}|\nabla v|^{2}dx
		&=-\int_{\mathbb{R}^3} v\cdot\nabla v\cdot v_t dx\\
		&\leq \frac{1}{2}\|v_t\|^2_{L^2}+C\|\nabla v\|^2_{L^2}\|v\|^2_{L^\infty},
		\end{aligned}
		\end{equation}
		from which and Proposition $\ref{Proposition-5.1}$, we arrive at
		\begin{equation}
		\int^{T}_{t_{0}}\int_{\mathbb{R}^{3}}|v_{t}|^{2}dxdt+\sup_{t\in[t_{0},T]}\int_{\mathbb{R}^{3}}|\nabla v|^{2} dx\leq 2C\|u_{0}\|^{2}_{\dot{B}^{\frac{1}{2}}_{2,1}},
		\end{equation}
		By virtue of $u\stackrel{\text { def }}{=}v+w$ and (\ref{d46}), we completes the proof of Lemma \ref{Lemma-u}.
	\end{proof}
	\subsection{The \emph{a priori} time-weighted estimates to (\ref{a1})}  
	
	\begin{Lemma}\label{Lemma-5.4}
		Suppose $(\rho,u,\pi)$ is the unique local strong solution to $(\ref{a1})$ satisfying $(\ref{a7}).$ Then under the assumptions of Proposition \ref{Proposition-5.5}, we have 
		\begin{equation}\label{d53}
		\begin{aligned}
		\int^{T}_{t_{0}}\int_{\mathbb{R}^{3}}t|u_{t}|^{2}dxdt+\sup_{t\in[t_{0},T]}\int_{\mathbb{R}^{3}}t|\nabla u|^{2} dx\leq  \exp\{C\mathcal{H}_{0}\},
		\end{aligned}
		\end{equation}
		where $\mathcal{H}_{0}$ is given by (\ref{h}) and $C$ depends on $\underline{\mu}$, $\overline{\mu}$, $M$, $\|\rho_0-1\|_{B^{\frac{3}{2}}_{2,1}}$ and $\|u_0\|_{\dot{H}^{-2\delta}}$.
	\end{Lemma}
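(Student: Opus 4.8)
The statement is a time-weighted version of the $\dot H^1$ estimate in Lemma \ref{Lemma-u}, so the natural approach is to repeat the energy estimate for the $v$-equation and the $w$-equation, but now with the weight $t$ inserted, and to absorb the extra term $\int |\nabla u|^2\,dx$ that is produced when the time derivative falls on $t$. Concretely, I would first work with the classical Navier--Stokes part $v$: multiply $(\ref{a11})_1$ by $t\,v_t$ and integrate over $\mathbb{R}^3$, obtaining
\begin{equation*}
\int_{\mathbb{R}^3} t\,|v_t|^2\,dx+\frac{1}{2}\frac{d}{dt}\int_{\mathbb{R}^3} t\,|\nabla v|^2\,dx
=\frac{1}{2}\int_{\mathbb{R}^3}|\nabla v|^2\,dx-\int_{\mathbb{R}^3} t\,(v\cdot\nabla v)\cdot v_t\,dx,
\end{equation*}
and then control the convection term by $\frac12\int t|v_t|^2 + Ct\|\nabla v\|_{L^2}^2\|v\|_{L^\infty}^2$. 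The first term on the right is integrable in time by Proposition \ref{Proposition-5.1} (it is $\lesssim\|u_0\|_{\dot B^{1/2}_{2,1}}^2$), and the coefficient $\|v\|_{L^\infty}^2\lesssim\|v\|_{\dot B^{3/2}_{2,1}}^2$ is integrable in time by the same proposition; so Gronwall gives a bound of the form $\exp\{C\|u_0\|_{\dot B^{1/2}_{2,1}}\}$ for $\int_{t_0}^T\! t\|v_t\|_{L^2}^2\,dt+\sup_{[t_0,T]} t\|\nabla v\|_{L^2}^2$.

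Next I would treat $w$. Multiply $(\ref{a12})_2$ by $t\,w_t$ and integrate; this reproduces the computation behind (\ref{d48})--(\ref{dd50}) in Lemma \ref{Lemma-5.3} verbatim except that every term carries a factor $t$, and there is one new term $\int_{\mathbb{R}^3}\mu(\rho)\mathbb{D}w:\mathbb{D}w\,dx$ coming from $\frac{d}{dt}$ hitting the weight. Using (\ref{a6}) this extra term is $\lesssim\|\nabla w\|_{L^2}^2$, which is integrable over $[t_0,T]$ by Lemma \ref{Lemma-5.2} with a bound $\lesssim\|u_0\|_{\dot B^{1/2}_{2,1}}^2$. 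Substituting Corollary \ref{Corollary-5.1} for $\|\nabla^2 w\|_{L^2}$ exactly as in Lemma \ref{Lemma-5.3}, absorbing the $\|\sqrt\rho w_t\|_{L^2}^2$ contributions on the left, and using the smallness $\sup_{[t_0,T]}\|\nabla w\|_{L^2}^2\le 4C\|u_0\|_{\dot B^{1/2}_{2,1}}^2\le 1$ from Proposition \ref{Proposition-5.5}'s hypothesis, I get a differential inequality of the schematic form
\begin{equation*}
t\,\|\sqrt\rho w_t\|_{L^2}^2+\frac{d}{dt}\!\int_{\mathbb{R}^3}\!t\,\mu(\rho)|\mathbb{D}w|^2\,dx
\le C\,t\big(\|\nabla w\|_{L^2}^6+\|\nabla w\|_{L^2}^2+\|v\|_{\dot B^{s_1}_{2,1}}^2\big)\|\nabla w\|_{L^2}^2+\text{(integrable terms)}.
\end{equation*}
Integrating in time and applying Gronwall's inequality, with the integrability of $\|v\|_{\dot B^{s_1}_{2,1}}^2$, $\|\nabla w\|_{L^2}^2$ and $\int t\|\nabla w\|_{L^2}^2\,dt$ (the last one is where I would need a preliminary bound on $\int_{t_0}^T t\|\nabla w\|_{L^2}^2\,dt$, obtained by multiplying the $L^2$ energy inequality (\ref{d43}) by $t$), yields a bound of the advertised exponential shape. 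Finally I combine the $v$ and $w$ bounds via $u=v+w$, $\nabla u=\nabla v+\nabla w$, $u_t=v_t+w_t$, and the lower bound on $\rho$ to pass from $\sqrt\rho w_t$ to $w_t$, which gives (\ref{d53}) with $\mathcal H_0$ collecting the accumulated constants.

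The main obstacle is bookkeeping rather than any single hard estimate: one must check that every term produced when $\frac{d}{dt}$ hits the weight $t$ — most importantly the $\int \mu(\rho)|\mathbb D w|^2$ term and, in the $v$-equation, the $\int|\nabla v|^2$ term — has already been controlled (in non-weighted form) by Lemmas \ref{Lemma-u1}, \ref{Lemma-5.2}, \ref{Lemma-5.3} and Proposition \ref{Proposition-5.1}, so that it contributes only an integrable-in-time source and does not feed back into the Gronwall argument. A secondary technical point is that the cubic and higher-order terms in $\|\nabla w\|_{L^2}$ arising from Corollary \ref{Corollary-5.1} must be tamed using the a priori smallness $\sup_{[t_0,T]}\|\nabla w\|_{L^2}^2\le 1$ from (\ref{d39}), exactly as in the proof of Lemma \ref{Lemma-5.3}; without that smallness the time-weighted estimate would not close either. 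Everything else is a routine repetition of the Gagliardo--Nirenberg and Young's-inequality splittings already carried out in Lemma \ref{Lemma-5.3}.
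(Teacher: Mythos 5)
Your route differs from the paper's: the paper does not decompose $u=v+w$ here at all, but multiplies $(\ref{a1})_2$ by $u_t$ directly, uses Corollary \ref{Corollary-5.1} to arrive at the differential inequality (\ref{dd53}) whose right-hand side is $C\|\nabla u\|_{L^2}^3+C\|\nabla u\|_{L^2}^4+C\|\nabla u\|_{L^2}^6$, multiplies by $t$, and closes by Gronwall. The decisive ingredient there is (\ref{d55}): the cubic term forces one to bound $\int_{t_0}^T\|\nabla u\|_{L^2}\,dt$, and this is only possible through the time-weighted decay estimate of Corollary \ref{Corollary-5.2} (hence Proposition \ref{Proposition-A.1} and the hypothesis $u_0\in\dot H^{-2\delta}$ with $\delta>1/2$) -- which is exactly why the bound in (\ref{d53}) is stated in terms of $\mathcal H_0$, a quantity containing $\|u(t_0)\|_{\dot H^{-2\delta}}$.

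This points to the genuine gap in your proposal: you never invoke the decay estimates, and without them several of the terms you label ``integrable'' are not bounded uniformly in $T$ on $[t_0,\infty)$. Multiplying (\ref{dd50}) by $t$ produces the source $C\,t\|\nabla w\|_{L^2}^2+C\,t\|v\|_{\dot B^{s_1}_{2,1}}^2$: treating $t\|\nabla w\|_{L^2}^2$ as a Gronwall term gives the non-uniform factor $e^{C(T-t_0)}$, and treating it as a source requires $\int_{t_0}^T t\|\nabla w\|_{L^2}^2\,dt<\infty$, which Lemma \ref{Lemma-5.2} does not give. Your proposed fix -- multiplying (\ref{d43}) by $t$ -- generates the new source $\int_{t_0}^T\|\sqrt\rho\,w\|_{L^2}^2\,dt$, and Lemma \ref{Lemma-5.2} only controls $\sup_t\|w\|_{L^2}^2$, so this integral may grow like $T$; it is finite uniformly in $T$ only after one has the $L^2$ decay of Proposition \ref{Proposition-A.1}. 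Likewise $\int_{t_0}^T t\|v\|^2_{\dot B^{s_1}_{2,1}}\,dt$ is not controlled by Proposition \ref{Proposition-5.1} alone (for data merely in $\dot B^{1/2}_{2,1}$ one only expects $\|v(t)\|_{\dot B^{5/2}_{2,1}}\sim t^{-1}$, so the weighted integral diverges logarithmically); again negative-index decay is needed. So as written your argument cannot produce a $T$-independent bound of the form $\exp\{C\mathcal H_0\}$; it would close if you first import Corollary \ref{Corollary-5.2}/Proposition \ref{Proposition-A.1} and use them exactly where the paper uses (\ref{d55}), or better, follow the paper and run the weighted estimate on $u$ itself, which avoids the weighted $v$-source terms altogether.
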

	\begin{proof} 
		Multiplying the momentum equations by $u_{t}$ and integrating over $\mathbb{R}^{3}$ yield
		\begin{equation}\label{dd52}
		\begin{aligned}
		&\int_{\mathbb{R}^{3}}\rho| u_{t}|^{2}dx+\frac{d}{dt}\int_{\mathbb{R}^{3}}\mu(\rho)\mathbb{D}u:\mathbb{D}u dx\\
		\leq&|\int_{\mathbb{R}^{3}}\rho u\cdot\nabla u\cdot u_{t}dx|+\int_{\mathbb{R}^{3}}|\nabla\mu(\rho)|\cdot|u|\cdot|\nabla u|^{2} dx.
		\end{aligned}
		\end{equation}
		Along the same way as (\ref{d49}) and using Corollary \ref{Corollary-5.1} to estimate each term on the right hand of (\ref{dd52}), we obtain
		\begin{equation}\label{dd53}
		\begin{aligned}
		\int_{\mathbb{R}^{3}}\rho|u_{t}|^{2}dx+\frac{d}{dt}\int_{\mathbb{R}^{3}}\mu(\rho)\mathbb{D}u:\mathbb{D}u dx \leq\frac{3}{8}\|\sqrt{\rho}u_{t}\|^{2}_{L^{2}}+C\|\nabla u\|^{3}_{L^{2}}+C\|\nabla u\|^{4}_{L^{2}}+C\|\nabla u\|^{6}_{L^{2}},
		\end{aligned}
		\end{equation}
		Multiplying (\ref{dd53}) by $t,$ as shown in the last proof, one has 
		\begin{equation}\label{d54}
		\begin{aligned}
		&\int^{T}_{t_{0}}t\|u_{t}\|^{2}_{L^{2}}dt+\sup_{t\in[t_{0},T]}t\|\nabla u\|^{2}_{L^{2}}\leq C\|\nabla u(t_0)\|^2_{L^2}+C\int^{T}_{t_{0}}\|\nabla u\|^{2}_{L^{2}}dt
		\\ &\quad+ C\int^{T}_{t_{0}}t\|\nabla u\|^{3}_{L^2}dt
		+C\int^{T}_{t_{0}}t\|\nabla u\|^{4}_{L^{2}}dt+C\int^{T}_{t_{0}}t\|\nabla u\|^{6}_{L^{2}}dt.
		\end{aligned}
		\end{equation}
		According to Corollary \ref{Corollary-5.2} and $(\ref{d1})$, we get for $\delta_{-}>\frac{1}{2},$ 
		\begin{equation}\label{d55}
		\int^{T}_{t_{0}}\|\nabla u\|_{L^2}dt\leq\left(\int^{T}_{t_{0}}\|t^{\delta_{-}}\nabla u\|^{2}_{L^2}dt\right)^{\frac{1}{2}}\left(\int^{T}_{t_{0}}t^{-2\delta_{-}}dt\right)^{\frac{1}{2}}  \leq C\mathcal{H}_{0}.
		\end{equation}   
		Applying Gronwall's inequality,
		\begin{equation}\label{d56}
		\begin{aligned}
		\int^{T}_{t_{0}}t\|u_{t}\|^{2}_{L^{2}}dt+\sup_{t\in[t_{0},T]}t\|\nabla u\|^{2}_{L^{2}}
		\leq  C\| u(t_{0})\|^{2}_{H^{1}}\exp\{C\mathcal{H}_{0}\}.
		\end{aligned}
		\end{equation}
		This completes the proof of Lemma \ref{Lemma-5.4}.
	\end{proof}
	\begin{Lemma}\label{Lemma-5.5}
		Suppose $(\rho,u,\pi)$ is the unique local strong solution to $(\ref{a1})$ satisfying $(\ref{a7}).$ Then under the assumptions of Proposition \ref{Proposition-5.5}, we have 
		\begin{equation}\label{d57}
		\begin{aligned}
		\sup_{t\in[t_{0},T]}t\|u_{t}\|^{2}_{L^{2}}+\int^{T}_{t_{0}}t\|\nabla u_{t}\|^{2}_{L^{2}}dt\leq \exp\{C\mathcal{H}_{0}\},
		\end{aligned}
		\end{equation}
		and
		\begin{equation}\label{d58}
		\begin{aligned}
		\sup_{t\in[t_{0},T]}t^2\|u_{t}\|^{2}_{L^{2}}+\int^{T}_{t_{0}}t^{2}\|\nabla u_{t}\|^{2}_{L^{2}}dt\leq \exp\{C\mathcal{H}_{0}\},
		\end{aligned}
		\end{equation}
		where $\mathcal{H}_{0}$ is given by (\ref{h}) and $C$ depends on $\underline{\mu}$, $\overline{\mu}$, $M$, $\|\rho_0-1\|_{B^{\frac{3}{2}}_{2,1}}$ and $\|u_0\|_{\dot{H}^{-2\delta}}$.
	\end{Lemma}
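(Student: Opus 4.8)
The plan is to run the classical time-differentiated energy estimate for the momentum equation, first with the weight $t$ and then with the weight $t^2$. First I would write the momentum equation in the non-conservative form $\rho\partial_t u+\rho u\cdot\nabla u-\operatorname{div}(2\mu(\rho)\mathbb D u)+\nabla\pi=0$, differentiate it in time, and use the transport identities $\partial_t\rho=-\operatorname{div}(\rho u)$ and $\partial_t\mu(\rho)=-u\cdot\nabla\mu(\rho)$ (the latter a consequence of the mass equation and $\operatorname{div} u=0$, already used in the proof of Lemma \ref{Lemma-5.3}). Taking the $L^2$ inner product of the resulting equation for $u_t$ with $u_t$, the time-derivative and density-transport contributions combine, modulo a convective remainder $\lesssim\|u\|_{L^3}\|\nabla u_t\|_{L^2}^2$ coming from $\int_{\mathbb R^3}\partial_t\rho\,|u_t|^2\,dx$ and $\int_{\mathbb R^3}\rho\,(u\cdot\nabla u_t)\cdot u_t\,dx$, into $\frac12\frac{d}{dt}\|\sqrt\rho u_t\|_{L^2}^2$; the viscous part produces the coercive quantity $2\int_{\mathbb R^3}\mu(\rho)|\mathbb D u_t|^2\,dx\geq\underline\mu\|\nabla u_t\|_{L^2}^2$ by Korn's inequality and \eqref{a6}; and the pressure term drops out since $\operatorname{div} u_t=0$. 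The remaining source terms are $-\partial_t\rho\,u\cdot\nabla u$, $-\rho u_t\cdot\nabla u$ and (up to a constant) $\operatorname{div}\big((u\cdot\nabla\mu(\rho))\mathbb D u\big)$, all tested against $u_t$.

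Next I would estimate these source terms. Using Hölder's inequality and Gagliardo--Nirenberg, each term is dominated by products of $\|\sqrt\rho u_t\|_{L^2}$ or $\|\nabla u_t\|_{L^2}$ with powers of $\|\nabla u\|_{L^2}$ and $\|\nabla^2 u\|_{L^2}$, the viscosity term also carrying the factor $\|\nabla\mu(\rho)\|_{L^q}\leq 4M$ from the bootstrap hypothesis of Proposition \ref{Proposition-5.5}; then Corollary \ref{Corollary-5.1} is used to replace $\|\nabla^2 u\|_{L^2}$ by $\|\sqrt\rho u_t\|_{L^2}+\|\nabla u\|_{L^2}+\|\nabla u\|_{L^2}^3$. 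The convective remainder is absorbed into $\underline\mu\|\nabla u_t\|_{L^2}^2$ thanks to the smallness of $\|u\|_{L^\infty_t(\dot B^{1/2}_{2,1})}$, hence of $\|u\|_{L^3}$. After Young's inequality this yields a differential inequality of the shape
$$\frac{d}{dt}\|\sqrt\rho u_t\|_{L^2}^2+\underline\mu\|\nabla u_t\|_{L^2}^2\lesssim\bigl(\|\nabla u\|_{L^2}^2+\|\nabla u\|_{L^2}^4\bigr)\|\sqrt\rho u_t\|_{L^2}^2+\|\nabla u\|_{L^2}^2+\|\nabla u\|_{L^2}^4+\|\nabla u\|_{L^2}^6 .$$

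For \eqref{d57} I would multiply this inequality by $t$ and integrate over $[t_0,T]$: the extra term $\int_{t_0}^T\|\sqrt\rho u_t\|_{L^2}^2\,dt$ produced by the product rule is bounded by $C\|u_0\|_{\dot B^{1/2}_{2,1}}^2$ via Lemma \ref{Lemma-u}; the weighted source integrals $\int_{t_0}^T t\|\nabla u\|_{L^2}^{2k}\,dt$ are handled by factoring out $t\|\nabla u\|_{L^2}^2$ and using $\sup_{[t_0,T]}t\|\nabla u\|_{L^2}^2\leq\exp\{C\mathcal H_0\}$ from Lemma \ref{Lemma-5.4} together with the zeroth-order bounds of Lemma \ref{Lemma-u} and the bound $\int_{t_0}^T\|\nabla u\|_{L^2}\,dt\leq C\mathcal H_0$ of \eqref{d55}; and the Gronwall exponent $\int_{t_0}^T(\|\nabla u\|_{L^2}^2+\|\nabla u\|_{L^2}^4)\,dt$ is again $\leq C\mathcal H_0$. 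Gronwall's lemma then gives \eqref{d57}. For \eqref{d58} the same computation with weight $t^2$ produces the extra term $2\int_{t_0}^T t\|\sqrt\rho u_t\|_{L^2}^2\,dt$, controlled by $\int_{t_0}^T t\|u_t\|_{L^2}^2\,dt\leq\exp\{C\mathcal H_0\}$ from Lemma \ref{Lemma-5.4}, while the weighted source integrals $\int_{t_0}^T t^2\|\nabla u\|_{L^2}^{2k}\,dt$ are split as $t\|\nabla u\|_{L^2}^2$ times lower-order weighted integrals of $\nabla u$ (finite by Corollary \ref{Corollary-5.2} and Lemma \ref{Lemma-5.4}); the Gronwall exponent is unchanged, so \eqref{d58} follows.

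I expect the main obstacle to be the bookkeeping of the viscosity term $\int_{\mathbb R^3}(u\cdot\nabla\mu(\rho))\,\mathbb D u:\mathbb D u_t\,dx$: since $\nabla\mu(\rho)$ is controlled only in $L^q$ with $q<6$, the Hölder exponents must be distributed so that $u$ sits in a norm bounded by $\dot B^{1/2}_{2,1}$, $\nabla u$ in a norm interpolated between $L^2$ and $\dot H^2$, and $\nabla u_t$ in $L^2$; after inserting Corollary \ref{Corollary-5.1} and absorbing $\|\nabla u_t\|_{L^2}^2$, the resulting powers of $\|\nabla u\|_{L^2}$, $\|\sqrt\rho u_t\|_{L^2}$ and of $t$ must be arranged so that every time integral appearing in the final Gronwall argument is finite, which is precisely where the condition $\delta_->\frac12$ behind \eqref{d55} and the uniform-in-time control $\|\nabla\mu(\rho)\|_{L^q}\leq 4M$ are essential.
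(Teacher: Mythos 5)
Your overall scheme --- differentiate the momentum equation in time, test with $u_t$, use Corollary \ref{Corollary-5.1} to eliminate $\|\nabla^2u\|_{L^2}$, then multiply by $t$ and $t^2$ and close with Gronwall using Lemmas \ref{Lemma-u}, \ref{Lemma-5.4} and Corollary \ref{Corollary-5.2} --- is the same as the paper's, but two steps would fail as written. First, the differential inequality you announce is not the one the estimates produce. The viscosity term $J_3=2\int_{\mathbb{R}^3}(u\cdot\nabla\mu(\rho))\,\mathbb{D}u:\nabla u_t\,dx$, after H\"older (with $\|\nabla\mu(\rho)\|_{L^q}\le 4M$), Gagliardo--Nirenberg and Corollary \ref{Corollary-5.1}, leaves a contribution of the type $\|\nabla u\|_{L^2}\|u_t\|_{L^2}^{3}$ (the paper's \eqref{j3}), i.e.\ superquadratic in $\|u_t\|_{L^2}$; it cannot be absorbed into a Gronwall coefficient of the form $\|\nabla u\|_{L^2}^{2}+\|\nabla u\|_{L^2}^{4}$, and the correct exponent must also carry $\int_{t_0}^{T}\|u_t\|_{L^2}^{2}dt$ (finite by Lemma \ref{Lemma-u}), which is absent from your scheme. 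Conversely, the bare $\|\nabla u\|_{L^2}^{2}$ source you wrote must not be there: at the $t^2$ level it forces you to bound $\int_{t_0}^{T}t^{2}\|\nabla u\|_{L^2}^{2}dt$, which is \emph{not} controlled by the available information ($\sup_t t\|\nabla u\|_{L^2}^2$ bounded and $\int t^{2\delta_{-}}\|\nabla u\|_{L^2}^2dt\le C\mathcal{H}_0$ with $2\delta_{-}<\tfrac32<2$), so \eqref{d58} would not close; the genuine lowest-order source is $\|\nabla u\|_{L^2}^{4}$, handled as in \eqref{d70} and, with weight $t^2$, via $\int\|\sqrt{t}\nabla u\|_{L^2}^{4}dt\le\sup_t(t\|\nabla u\|_{L^2}^2)\int t^{1-2\delta_{-}}\,t^{2\delta_{-}}\|\nabla u\|_{L^2}^{2}dt$, where $\delta_{-}>\tfrac12$ makes $t^{1-2\delta_{-}}\le t_0^{1-2\delta_{-}}$. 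For the same reason your recipe of ``factoring out $t\|\nabla u\|_{L^2}^2$'' fails for the lowest-power term at the $t$ level (it leaves $\int 1\,dt$); one needs Corollary \ref{Corollary-5.2} there, not \eqref{d55}.

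Second, you never control the boundary term at $t=t_0$. Multiplying by $t$ (or $t^2$) and integrating from $t_0>0$ leaves $t_0\|\sqrt{\rho}u_t(t_0)\|_{L^2}^{2}$ on the right-hand side, and nothing in your argument bounds $\|u_t(t_0)\|_{L^2}$; the paper obtains this in \eqref{d71} by evaluating the momentum equation at $t=t_0$ and using the regularity \eqref{d1} of $u(t_0)$ in $\dot{B}^{\frac12}_{2,1}\cap\dot{B}^{\frac52}_{2,1}$ together with the bootstrap bound on $\|\nabla\mu(\rho)\|_{L^q}$. Without this step the stated bounds \eqref{d57}--\eqref{d58} do not follow near $t=t_0$. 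A smaller point: your absorption of the convective remainder by smallness of $\|u\|_{L^3}$ invokes smallness of $\|u\|_{L^\infty_t(\dot{B}^{1/2}_{2,1})}$ on $[t_0,T]$, which is not among the hypotheses of Proposition \ref{Proposition-5.5} (it is only established, and not as a small quantity, at the end of the paper); this is avoidable, since Young's inequality alone suffices, as in \eqref{j1}.
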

	\begin{proof}
		Take $t$-derivative of the momentum equations,
		\begin{equation}\label{d59}
		\begin{aligned}
		\rho\partial_{tt}u+\rho u\cdot\nabla u_{t}-\operatorname{div}(2\mu(\rho)\mathbb{D}u_{t})+\nabla\pi_{t}=-\rho_t u_t- \left(\rho u\right)_t\cdot\nabla u+
		\operatorname{div}(2\partial_{t}\mu(\rho)\mathbb{D}u).
		\end{aligned}
		\end{equation}
		Multiplying (\ref{d59}) by $u_{t}$ and integrating over $\mathbb{R}^{3}$, we get after integration by parts that
		\begin{equation}\label{d60}
		\begin{aligned}
		&\frac{1}{2}\frac{d}{dt}\int_{\mathbb{R}^{3}}\rho|u_{t}|^{2}dx+2\int_{\mathbb{R}^{3}}\mu(\rho)\mathbb{D}u_{t}:\mathbb{D}u_{t}dx\\=&-\int_{\mathbb{R}^{3}}\rho_t u_t\cdot u_{t}dx-\int_{\mathbb{R}^{3}} \left(\rho u\right)_t\cdot\nabla u\cdot u_{t}dx
		-2\int_{\mathbb{R}^{3}}\partial_{t}\mu(\rho)\mathbb{D}u\cdot\nabla u_{t}dx\\\stackrel{\text { def }}{=}&\sum_{i=1}^{3} J_{i}.
		\end{aligned}
		\end{equation} 
		\par Now, we will use the Gagliardo-Nirenberg inequality to estimate each term on the right hand of (\ref{d60}). First, thanks to the mass equation, one has
		\begin{equation}\label{j1}
		\begin{aligned}
		J_1=-2\int_{\mathbb{R}^{3}}\rho u\cdot\nabla u_t\cdot u_{t}dx&\leq C\|u\|_{L^6}\|\nabla u_t\|_{L^2}\|u_t\|_{L^3}\\
		&\leq C\|\nabla u\|_{L^2}\|u_t\|_{L^2}^{\frac{1}{2}}\|\nabla u_t\|^{\frac{3}{2}}_{L^2}\\
		&\leq \frac{1}{8} \underline{\mu} \|\nabla u_t\|^{2}_{L^2}+C\|u_t\|^{2}_{L^2}\|\nabla u\|^{4}_{L^2}.
		\end{aligned}
		\end{equation} 
		Taking into account the mass equation again, we arrive at
		$$\begin{aligned}
		J_2&=-\int_{\mathbb{R}^{3}}\rho_t u \cdot\nabla u|u_{t}dx-\int_{\mathbb{R}^{3}}\rho u_t\cdot\nabla u|u_{t}dx\\
		& \leq  \int_{\mathbb{R}^{3}} \rho|u| \cdot|\nabla u|^2 \cdot\left|u_t\right| dx+C  \int_{\mathbb{R}^{3}} \rho|u|^2 \cdot\left|\nabla^2 u\right| \cdot\left|u_t\right| d x \\
		& \quad+ \int_{\mathbb{R}^{3}}\rho|u|^2 \cdot|\nabla u| \cdot\left|\nabla u_t\right| d x+ \int_{\mathbb{R}^{3}} \rho\left|u_t\right|^2 \cdot|\nabla u| d x.
		\end{aligned}$$
		Hence, it follows from Sobolev embedding inequality, Gagliardo–Nirenberg inequality, that
		$$
		\begin{aligned}
		\int_{\mathbb{R}^{3}} \rho|u| \cdot|\nabla u|^2 \cdot\left|u_t\right| d x & \leq C\left\|u_t\right\|_{L^6} \|u\|_{L^6} \|\nabla u\|_{L^3}^2 \\
		& \leq C \left\|\nabla u_t\right\|_{L^2} \|\nabla u\|_{L^2}^2 \|\nabla^2 u\|_{L^2} \\
		& \leq \frac{1}{8} \underline{\mu}\left\|\nabla u_t\right\|_{L^2}^2+C\|\nabla u\|_{L^2}^4\left\|\nabla^2 u\right\|_{L^2}^2.
		\end{aligned}
		$$
		Similarly, it holds that
		$$
		\begin{aligned}
		\int_{\mathbb{R}^{3}} \rho|u|^2 \cdot\left|\nabla^2 u\right| \cdot\left|u_t\right| d x & \leq C \left\|u_t\right\|_{L^6} \left\|\nabla^2 u\right\|_{L^2} \|u\|_{L^6}^2 \\
		& \leq \frac{1}{8} \underline{\mu}\left\|\nabla u_t\right\|_{L^2}^2+C\left\|\nabla^2 u\right\|^2_{L^2} \|\nabla u\|_{L^2}^4,
		\end{aligned}
		$$
		and
		$$
		\begin{aligned}
		\int_{\mathbb{R}^{3}}\rho|u|^2 \cdot|\nabla u| \cdot\left|\nabla u_t\right| d x & \leq C \left\|\nabla u_t\right\|_{L^2} \|\nabla u\|_{L^6} \|u\|_{L^6}^2 \\
		& \leq \frac{1}{8} \underline{\mu} \left\|\nabla u_t\right\|_{L^2}^2+ C\left\|\nabla^2 u\right\|_{L^2}^2 \|\nabla u\|_{L^2}^4.
		\end{aligned}
		$$
		Owing to Sobolev embedding inequality,
		$$\begin{aligned}
		\int_{\mathbb{R}^{3}} \rho\left|u_t\right|^2 \cdot|\nabla u| d x&\leq C\left\|u_t\right\|^2_{L^4} \|\nabla u\|_{L^2} \\
		&\leq  C\left\|u_t\right\|^{\frac{1}{2}}_{L^2} \left\|\nabla u_t\right\|^{\frac{3}{2}}_{L^2}\|\nabla u\|_{L^2}\\
		&\leq \frac{1}{8} \underline{\mu} \left\|\nabla u_t\right\|_{L^2}^2+C\|u_t\|^2_{L^2}\|\nabla u\|^4_{L^2}.
		\end{aligned}$$
		Thanks to Corollary \ref{Corollary-5.1}, we deduce that
		\begin{equation}\label{j2}
		\begin{aligned}
		|J_2|&\leq \frac{1}{2} \underline{\mu}\left\|\nabla u_t\right\|_{L^2}^2+C\|\nabla u\|_{L^2}^4\left\|\nabla^2 u\right\|_{L^2}^2+C\|u_t\|^2_{L^2}\|\nabla u\|^4_{L^2}\\
		&\leq \frac{1}{2} \underline{\mu}\left\|\nabla u_t\right\|_{L^2}^2 +C\|u_t\|^2_{L^2}\|\nabla u\|^4_{L^2}+C\|\nabla u\|_{L^2}^6+C\|\nabla u\|_{L^2}^{10}.
		\end{aligned}
		\end{equation}
		Along the same way, we obtain
		$$
		\begin{aligned}
		\left|J_3\right| &\leq C \int|u| \cdot|\nabla \mu(\rho)|\cdot |\mathbb{D}u| \cdot\left|\nabla u_t\right| d x \\
		& \leq C\|\nabla \mu(\rho)\|_{L^q}\|u\|_{L^{\infty}}\left\|\nabla u_t\right\|_{L^2}\|\nabla u\|_{L^{\frac{2 q}{q-2}}} \\
		& \leq C(q, M)\|u\|_{L^6}^{1 / 2}\|\nabla u\|_{L^6}^{1 / 2}\left\|\nabla u_t\right\|_{L^2}\|\nabla u\|_{L^2}^{\frac{q-3}{q}}\left\|\nabla^2 u\right\|_{L^2}^{\frac{3}{q}} \\
		& \leq \frac{1}{8} \underline{\mu}\left\|\nabla u_t\right\|_{L^2}^2+C\|\nabla u\|_{L^2}\left\|\nabla^2 u\right\|_{L^2}^3+C\|\nabla u\|_{L^2}^4,
		\end{aligned}
		$$
		thus, it follows from Corollary \ref{Corollary-5.1} that
		\begin{equation}\label{j3}
		\begin{aligned}
		|J_3|\leq \frac{1}{8} \underline{\mu}\left\|\nabla u_t\right\|_{L^2}^2+C\|\nabla u\|_{L^2}\left\|u_t\right\|_{L^2}^3+C\|\nabla u\|_{L^2}^4+C\|\nabla u\|^{10}_{L^2}.
		\end{aligned}
		\end{equation}
		\par Substituting (\ref{j1})-(\ref{j3}) into (\ref{d60}) and applying Corollary \ref{Corollary-5.1}, gives
		\begin{equation}\label{d68}
		\begin{aligned}
		\frac{d}{dt}\int_{\mathbb{R}^{3}}\rho|u_{t}|^{2}dx+&\|\nabla u_{t}\|^{2}_{L^{2}} \leq C\|u_t\|^2_{L^2}\|\nabla u\|^4_{L^2}+C\|u_t\|^4_{L^2}\\&+C\|\nabla u\|_{L^2}^4+C\|\nabla u\|_{L^2}^6+C\|\nabla u\|^{10}_{L^2}.
		\end{aligned}
		\end{equation}
		Multiplying (\ref{d68}) by $t$ gives that
		$$
		\begin{aligned}
		\frac{d}{dt}\|\sqrt{t}&\sqrt{\rho}u_{t}\|^{2}_{L^{2}}+\|\sqrt{t}\nabla u_{t}\|^{2}_{L^{2}}\lesssim \|u_{t}\|^{2}_{L^{2}}+\|\sqrt{t}u_{t}\|^{2}_{L^{2}}\|\nabla u\|^{4}_{L^{2}}\\&+ \|\sqrt{t}u_{t}\|^{2}_{L^{2}}\|u_{t}\|^{2}_{L^{2}}+\|\sqrt{t}\nabla u\|^{2}_{L^{2}}\{\|\nabla u\|^{2}_{L^{2}}+\|\nabla u\|^{4}_{L^{2}}+\|\nabla u\|^{8}_{L^{2}}\}.
		\end{aligned}
		$$
		Integrating with respect to time on $[t_{0},T]$ and using Gronwall's inequality gives
		\begin{equation}\label{d69}
		\begin{aligned}
		\sup_{t\in[t_{0},T]}t\|&u_{t}\|^{2}_{L^{2}}+\int^{T}_{t_{0}}t\|\nabla u_{t}\|^{2}_{L^{2}}dt\lesssim\{\|u_{t}(t_0)\|^{2}_{L^{2}}+\int^{T}_{t_{0}}\|u_t\|^{2}_{L^{2}}dt\\&+\int^{T}_{t_{0}}\|\sqrt{t}\nabla u\|^{2}_{L^{2}}(\|\nabla u\|^{2}_{L^{2}}+\|\nabla u\|^{4}_{L^{2}}+\|\nabla u\|^{8}_{L^{2}})dt\}\\&\quad\times\exp\left\{\int^{T}_{t_{0}}\|\nabla u\|^{4}_{L^{2}}dt+\int^{T}_{t_0}\|u_t\|^{2}_{L^{2}}dt\right\}.
		\end{aligned}
		\end{equation}
		According to Lemma \ref{Lemma-u} and \ref{Lemma-5.4},
		\begin{equation}\label{d70}
		\begin{aligned}
		&\int^{T}_{t_{0}}\|\sqrt{t}\nabla u\|^{2}_{L^{2}}\left(\|\nabla u\|^{2}_{L^{2}}+\|\nabla u\|^{4}_{L^{2}}+\|\nabla u\|^{8}_{L^{2}}\right)dt\\
		\leq &\sup_{t\in[t_{0},T]}\|\sqrt{t}\nabla u\|^{2}_{L^{2}}\cdot\left(1+\sup_{t\in[t_{0},T]}\|\nabla u\|^{2}_{L^{2}}+\sup_{t\in[t_{0},T]}\|\nabla u\|^{6}_{L^{2}}\right)\\&\quad\times\int^{T}_{t_{0}}\|\nabla u\|^{2}_{L^{2}}dt\leq \|u_0\|^2_{\dot{B}^{\frac{1}{2}}_{2,1}}\exp\{C\mathcal{H}_{0}\}.
		\end{aligned}
		\end{equation}
		Whereas taking $L^2$-norm of the $u_t$ at $t=t_0$ and using (\ref{d1}) gives rise to
		\begin{equation}\label{d71}
		\begin{aligned}
		\underline{\rho}\|u_t(t_0)\|_{L^2}
		\lesssim &\|\rho u\cdot\nabla u (t_0)\|_{L^2}+\|\operatorname{div}(2\mu(\rho)\mathbb{D}u)(t_0)\|_{L^2}\\
		\lesssim & \|u(t_0)\|_{L^6}\|\nabla u(t_0)\|_{L^3}+\|\nabla\mu(\rho)(t_0)\|_{L^q}\|\mathbb{D}u(t_0)\|_{L^{\frac{q-2}{2q}}}+\|\Delta u(t_0)\|_{L^2}\\
		\lesssim & \|\nabla u(t_0)\|^{\frac{3}{2}}_{L^2}\|\nabla^{2}u(t_0)\|^{\frac{1}{2}}_{L^2}+M\|\nabla u(t_0)\|^{\frac{q-3}{q}}_{L^{2}}\|\nabla^2 u(t_0)\|^{\frac{3}{q}}_{L^2}+\|\Delta u(t_0)\|_{L^2}\\
		\lesssim & \|u_0\|_{\dot{B}^{\frac{1}{2}}_{2,1}}.
		\end{aligned}
		\end{equation}
		Plugging (\ref{d70}) and (\ref{d71}) into (\ref{d69}), we have
		\begin{equation}\label{d72}
		\begin{aligned}
		\sup_{t\in[t_{0},T]}t\|u_{t}\|^{2}_{L^{2}}+\int^{T}_{t_{0}}t\|\nabla u_{t}\|^{2}_{L^{2}}dt\leq\|u_0\|^2_{\dot{B}^{\frac{1}{2}}_{2,1}}\exp\{C\mathcal{H}_{0}\}.
		\end{aligned}
		\end{equation}
		On the other hand, multiplying (\ref{d68}) by $t^2,$ one has
		$$
		\begin{aligned}
		\frac{d}{dt}\|t\sqrt{\rho}u_{t}&\|^{2}_{L^{2}}+\|t\nabla u_{t}\|^{2}_{L^{2}}\lesssim \|\sqrt{t}u_{t}\|^{2}_{L^{2}}+ \|tu_{t}\|^{2}_{L^{2}}\|\nabla u\|^{4}_{L^{2}}\\&+\|tu_{t}\|^{2}_{L^{2}}\|u_t\|^{2}_{L^{2}}+\|\sqrt{t}\nabla u\|^{4}_{L^{2}}\{1+\|\nabla u\|^{2}_{L^{2}}+\|\nabla u\|^{6}_{L^{2}}\}.
		\end{aligned}
		$$
		Owing to Corollary \ref{Corollary-5.2}, we get for $\delta>\frac{1}{2},$ 
		$$
		\begin{aligned}
		\int^{T}_{t_0}\|\sqrt{t}\nabla u\|^{4}_{L^{2}}dt\leq \sup_{t\in[t_0,T]}\|\sqrt{t}\nabla u\|^{2}_{L^{2}}\int^{T}_{t_0}t^{1-2\delta_{-}}\|t^{\delta_{-}}\nabla u\|^{2}_{L^{2}}dt\leq \exp\{C\mathcal{H}_{0}\}.
		\end{aligned}$$ 
		From which and Gronwall's inequality, we can deduce
		\begin{equation}\label{d74}
		\begin{aligned}
		&\sup_{t\in[t_{0},T]}t^2\|u_{t}\|^{2}_{L^{2}}+\int^{T}_{t_{0}}t^2\|\nabla u_{t}\|^{2}_{L^{2}}dt\leq \exp\{C\mathcal{H}_{0}\}.
		\end{aligned}
		\end{equation}
		This completes the proof of Lemma \ref{Lemma-5.5}.
	\end{proof}
	
	\subsection{The $L^{1}([t_0,T];L^\infty(\mathbb{R}^{3}))$ estimate for $\nabla u$}
	To close the $L^\infty([t_0,T];L^q(\mathbb{R}^{3}))$ estimate of $\nabla \mu(\rho)$ in this section, we need the $L^{1}([t_0,T];L^\infty(\mathbb{R}^{3})))$ estimate of $\nabla u$ to be small enough, i.e:
	\begin{Lemma}\label{Lemma-5.6}
		Let $p\in(2,\frac{6\delta_{-}}{1+\delta_{-}})$ for $\delta\in(\frac{1}{2},\frac{3}{4})$ and $\theta_1=\frac{(r-3)p}{3r-3p+pr}$. Assume that $(\rho,u,\pi)$ is the unique local strong solution to $(\ref{a1})$ satisfying $(\ref{a7}).$ Then under the assumptions of Proposition \ref{Proposition-5.5}, we have 
		\begin{equation}\label{d75}
		\begin{aligned}
		\|\nabla u\|_{L^{1}([t_{0},T];L^\infty)}\lesssim\|u_{0}\|^{\frac{3\theta_1(p-2)}{2p}}_{\dot{B}^{\frac{1}{2}}_{2,1}}\exp\{C\mathcal{H}_{0}\},
		\end{aligned}
		\end{equation}
		where $\mathcal{H}_{0}$ is given by (\ref{h}) and $C$ depends on $\underline{\mu}$, $\overline{\mu}$, $M$, $\|\rho_0-1\|_{B^{\frac{3}{2}}_{2,1}}$ and $\|u_0\|_{\dot{H}^{-2\delta}}$.
	\end{Lemma}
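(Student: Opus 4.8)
The plan is to bound $\|\nabla u(t)\|_{L^{\infty}}$ pointwise in $t$ by a small power of $\|u_{0}\|_{\dot{B}^{\frac12}_{2,1}}$ times a product of factors whose appropriate $t$-integrals over $[t_{0},T]$ are already controlled by $\exp\{C\mathcal{H}_{0}\}$ through Lemmas \ref{Lemma-u}--\ref{Lemma-5.5} and Corollary \ref{Corollary-5.2}, and then to integrate in $t$. The starting point is the scaling-forced Gagliardo--Nirenberg chain in $\mathbb{R}^{3}$,
\[
\|\nabla u\|_{L^{\infty}}\lesssim\|\nabla u\|_{L^{p}}^{\theta_1}\|\nabla^{2}u\|_{L^{r}}^{1-\theta_1},\qquad
\|\nabla u\|_{L^{p}}\lesssim\|\nabla u\|_{L^{2}}^{1-\frac{3(p-2)}{2p}}\|\nabla^{2}u\|_{L^{2}}^{\frac{3(p-2)}{2p}},
\]
valid for $3<r\le q$ and $2<p<\frac{6\delta_{-}}{1+\delta_{-}}<3$, so that $\theta_1=\frac{(r-3)p}{3r-3p+pr}\in(0,1)$ (since $r>3$) and $\frac{3(p-2)}{2p}\in(0,\tfrac12)$ (since $p<3$). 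Multiplying the two inequalities gives $\|\nabla u\|_{L^{\infty}}\lesssim\|\nabla u\|_{L^{2}}^{\theta_1-\frac{3\theta_1(p-2)}{2p}}\|\nabla^{2}u\|_{L^{2}}^{\frac{3\theta_1(p-2)}{2p}}\|\nabla^{2}u\|_{L^{r}}^{1-\theta_1}$. Since by Lemma \ref{Lemma-u} one has $\|\nabla u\|_{L^{\infty}([t_{0},T];L^{2})}\lesssim\|u_{0}\|_{\dot{B}^{\frac12}_{2,1}}$ and $\frac{3\theta_1(p-2)}{2p}<\theta_1-\frac{3\theta_1(p-2)}{2p}$ (again because $p<3$), I would extract the factor $\|u_{0}\|_{\dot{B}^{\frac12}_{2,1}}^{\frac{3\theta_1(p-2)}{2p}}$ from the first term and keep the remaining power of $\|\nabla u\|_{L^{2}}$ for the time integrability; this produces exactly the small constant in (\ref{d75}).

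Next I would turn $\|\nabla^{2}u\|_{L^{2}}$ and $\|\nabla^{2}u\|_{L^{r}}$ into quantities that decay in time. Writing $(\ref{a1})_{2}$ as the Stokes system $-\operatorname{div}(2\mu(\rho)\mathbb{D}u)+\nabla\pi=F$ with $F=-\rho u_{t}-\operatorname{div}(\rho u\otimes u)$ and invoking Lemma \ref{Lemma-5.1} under the running bound $\sup_{[t_{0},T]}\|\nabla\mu(\rho)\|_{L^{q}}\le 4M$ (using Corollary \ref{Corollary-5.1}, the smallness of $\|\nabla u\|_{L^{2}}$ to absorb the cubic term, the conservation $\|\rho-1\|_{L^{3}}=\|\rho_{0}-1\|_{L^{3}}$ along the flow, and a Sobolev embedding for $(-\Delta)^{-1}\operatorname{div}F$) yields, with constants depending only on $\underline{\mu},\overline{\mu},q,M,\|\rho_{0}-1\|_{B^{\frac32}_{2,1}}$ and $\|u_{0}\|_{\dot{H}^{-2\delta}}$,
\[
\|\nabla^{2}u\|_{L^{2}}\lesssim\|u_{t}\|_{L^{2}}+\|\nabla u\|_{L^{2}},\qquad
\|\nabla^{2}u\|_{L^{r}}\lesssim\|u_{t}\|_{L^{r}}+\|\nabla u\|_{L^{2}}\big(1+\|\nabla u\|_{L^{2}}+\|\nabla^{2}u\|_{L^{2}}\big).
\]
A further Gagliardo--Nirenberg estimate, $\|u_{t}\|_{L^{r}}\lesssim\|u_{t}\|_{L^{2}}^{1-\frac{3(r-2)}{2r}}\|\nabla u_{t}\|_{L^{2}}^{\frac{3(r-2)}{2r}}$ for $3<r<6$, then reduces the whole right-hand side of the interpolation to the three quantities $\|\nabla u\|_{L^{2}}$, $\|u_{t}\|_{L^{2}}$ and $\|\nabla u_{t}\|_{L^{2}}$.

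The final and most delicate step is a Hölder argument in $t$ over $[t_{0},T]$, fed by the weighted bounds now available: $\|\nabla u\|_{L^{2}}\lesssim\|u_{0}\|_{\dot{B}^{\frac12}_{2,1}}$ and $\int_{t_{0}}^{T}t^{2\delta_{-}}\|\nabla u\|_{L^{2}}^{2}\,dt\le\exp\{C\mathcal{H}_{0}\}$ (Lemma \ref{Lemma-u}, Corollary \ref{Corollary-5.2}); $\sup_{t}t\|u_{t}\|_{L^{2}}^{2}$, $\sup_{t}t^{2}\|u_{t}\|_{L^{2}}^{2}$ and $\int_{t_{0}}^{T}t^{2}\|\nabla u_{t}\|_{L^{2}}^{2}\,dt$ all $\le\exp\{C\mathcal{H}_{0}\}$ (Lemmas \ref{Lemma-5.4}, \ref{Lemma-5.5}); and $\int_{t_{0}}^{T}\|u_{t}\|_{L^{2}}^{2}\,dt\lesssim\|u_{0}\|_{\dot{B}^{\frac12}_{2,1}}^{2}$ (Lemma \ref{Lemma-u}). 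After substituting all the interpolation exponents, $\|\nabla u(t)\|_{L^{\infty}}$ is dominated by $\|u_{0}\|_{\dot{B}^{\frac12}_{2,1}}^{\frac{3\theta_1(p-2)}{2p}}\exp\{C\mathcal{H}_{0}\}$ times a product of powers of $\|\nabla u\|_{L^{2}}$, $\|u_{t}\|_{L^{2}}$ and $\|\nabla u_{t}\|_{L^{2}}$; I would distribute these powers with Hölder's inequality so that the pointwise parts contribute a single weight $t^{-\beta}$ (from the rates $t^{-\delta_{-}}$ of $\|\nabla u\|_{L^{2}}$ and $t^{-1/2}$, $t^{-1}$ of $\|u_{t}\|_{L^{2}}$) while $\|\nabla u_{t}\|_{L^{2}}$ is placed in a weighted $L^{2}_{t}$-norm, and then check $\beta>1$, which holds uniformly in $T$ precisely when $p<\frac{6\delta_{-}}{1+\delta_{-}}$. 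There is no integrability issue near $t=t_{0}$ since $t_{0}>0$ is fixed and $u(t_{0})\in\dot{B}^{\frac52}_{2,1}\hookrightarrow\dot{W}^{1,\infty}$ by (\ref{d1}). Collecting the bounds gives (\ref{d75}).

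I expect the genuine obstacle to be exactly this last Hölder bookkeeping: the exponents must be chosen to simultaneously (i) leave enough negative powers of $t$ to be absorbed against the decay rates of $\|\nabla u\|_{L^{2}}$ and $\|u_{t}\|_{L^{2}}$, so that the $t$-integral converges uniformly in $T$ --- this is what pins the threshold $\frac{6\delta_{-}}{1+\delta_{-}}$ on $p$ --- and (ii) retain a strictly positive power of $\|u_{0}\|_{\dot{B}^{\frac12}_{2,1}}$ from the small factors $\|\nabla u\|_{L^{\infty}_{t}L^{2}}$ and $\int_{t_{0}}^{T}\|u_{t}\|_{L^{2}}^{2}\,dt$. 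Because $\|\nabla^{2}u\|_{L^{r}}$ is the only term costing a full power of $\|\nabla u_{t}\|_{L^{2}}$, which is available only with the heavy $t^{2}$-weight of Lemma \ref{Lemma-5.5}, its exponent $1-\theta_1$ must be kept small, and this is precisely why the interpolation is routed through $L^{p}$ instead of estimating $\|\nabla u\|_{L^{\infty}}$ directly from $\|\nabla u\|_{L^{2}}$ and $\|\nabla^{2}u\|_{L^{r}}$.
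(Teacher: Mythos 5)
Your treatment of $\int_{t_0}^T\|\nabla^2u\|_{L^r}\,dt$ (Lemma \ref{Lemma-5.1} with $F=-\rho u_t-\rho u\cdot\nabla u$, Gagliardo--Nirenberg on $F$, then the weighted bounds of Lemmas \ref{Lemma-5.4}--\ref{Lemma-5.5} and Corollary \ref{Corollary-5.2}) is essentially the paper's Step 1 and is fine. The genuine gap is in how you produce the small prefactor. You extract $\|u_0\|_{\dot B^{1/2}_{2,1}}^{\frac{3\theta_1(p-2)}{2p}}$ from the factor $\|\nabla u\|_{L^2}^{\theta_1(1-\theta_2)}$ (with $\theta_2=\frac{3(p-2)}{2p}$) via the sup-in-time bound $\sup_t\|\nabla u\|_{L^2}\lesssim\|u_0\|_{\dot B^{1/2}_{2,1}}$, so only the exponent $\theta_1(1-2\theta_2)$ of $\|\nabla u\|_{L^2}$ remains to supply time integrability through Corollary \ref{Corollary-5.2}. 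But a sup bound carries no time decay: every unit of exponent spent on it is a unit of $t^{\delta_-}$-weighted integrability lost. Carrying out your H\"older bookkeeping with the ingredients you list (put $\|\nabla^2u\|_{L^r}^{1-\theta_1}$ in $L^{1/(1-\theta_1)}_t$, $\|\nabla^2u\|_{L^2}^{\theta_1\theta_2}$ in $L^{2/(\theta_1\theta_2)}_t$, even with the extra $t^{1/2}$ weight available from Lemmas \ref{Lemma-u}, \ref{Lemma-5.4}, and $\|\nabla u\|_{L^2}^{\theta_1(1-2\theta_2)}$ against $t^{\delta_-}\nabla u\in L^2_t(L^2)$), the leftover power of $t$ is integrable only if $2\delta_-(1-2\theta_2)>1$, i.e.\ $p<\frac{12\delta_-}{1+4\delta_-}$, which is strictly smaller than $\frac{6\delta_-}{1+\delta_-}$ for every $\delta_-\in(\frac12,\frac34)$. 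So your assertion that the bookkeeping closes ``precisely when $p<\frac{6\delta_-}{1+\delta_-}$'' is unsubstantiated and, for the natural splits, false; the proposal as written proves the lemma only on a strictly smaller range of $p$.

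The paper gets both the smallness and the full range by taking the small factor from the other interpolation slot: it writes $\|\nabla u\|_{L^p}\lesssim\|\nabla u\|_{L^6}^{\theta_2}\|\nabla u\|_{L^2}^{1-\theta_2}$ and proves the \emph{unweighted-in-time} bound $\int_{t_0}^T\|\nabla u\|^2_{L^6}\,dt\lesssim\|u_0\|^2_{\dot B^{1/2}_{2,1}}$, using the decomposition $u=v+w$: Proposition \ref{Proposition-5.1} for $v$, and (\ref{d36}) of Corollary \ref{Corollary-5.1} together with Lemmas \ref{Lemma-5.2}--\ref{Lemma-5.3} for $\|\nabla^2w\|_{L^2}\in L^2_t$ with small norm. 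Then the entire exponent $1-\theta_2$ sits on the $t^{\delta_-}$-weighted $L^2_t(L^2)$ factor, the time integral converges exactly when $\frac{(6-p)\delta_-}{p}>1$, i.e.\ $p<\frac{6\delta_-}{1+\delta_-}$, and the small power $\|u_0\|^{\theta_2}$ costs no time weight at all. Your route never invokes this smallness, and the obvious substitute fails: $\int\|\nabla^2u\|^2_{L^2}dt\lesssim\int\|u_t\|^2_{L^2}dt+\int\|\nabla u\|^2_{L^2}dt$ is bounded but not small, since $\int\|\nabla u\|^2_{L^2}dt\lesssim\|u(t_0)\|^2_{L^2}$ and $\|u(t_0)\|_{L^2}$ is controlled only through $\dot H^{-2\delta}$, not by $\|u_0\|_{\dot B^{1/2}_{2,1}}$. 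To repair the argument you must route the smallness through the $L^2_t(L^6)$ estimate of $\nabla u$ coming from the $v$--$w$ decomposition, i.e.\ essentially the paper's Step 2.
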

	\begin{proof} 
		Applying the Gagliardo-Nirenberg inequality, we have
		\begin{align}\label{d80}
		\begin{aligned}
		\|\nabla u\|_{L^{1}([t_{0},T];L^\infty)}&\lesssim\|\nabla u\|^{\theta_1}_{L^{1}([t_{0},T];L^p)}\|\nabla^{2}u\|^{1-\theta_1}_{L^{1}([t_{0},T];L^r)},
		\end{aligned}
		\end{align}
		with $$0=\frac{\theta_1}{p}+(1-\theta_1)(\frac{1}{r}-\frac{1}{3})\quad{\rm or}\quad\theta_1=\frac{(r-3)p}{3r-3p+pr}.$$
		where $p\in[1,+\infty)$ is a positive constant which will be determined later and $r>3$. We now decompose the proof of Lemma \ref{Lemma-5.6} into the following steps:
		\vskip0.5cm
		\noindent\textbf{Step 1.} The estimate of $\|\nabla^{2}u\|_{L^{1}([t_{0},T];L^r)}.$
		
		By virtue of Lemma \ref{Lemma-5.1}, one has for $r\in(2,q]$ with $q\in(3,6),$
		\begin{align}\label{d76}
		\|\nabla^2 u\|_{L^r}\lesssim \|\nabla u\|_{L^2}+\|F\|_{L^r}+\|(-\Delta)^{-1}\operatorname{div}F\|_{L^2},
		\end{align}
		with $F=-\rho \partial_t u-\rho u\cdot\nabla u.$ Thus, for any $\eta>0$, we have
		$$\begin{aligned}
		\|F\|_{L^r}\lesssim &\|\rho u_t\|_{L^r}+\|u\|_{L^6}\|\nabla u\|_{L^{\frac{6r}{6-r}}}\\
		\lesssim& \|u_t\|^{\frac{6-r}{2r}}_{L^2}\|\nabla u_t\|^{\frac{3r-6}{2r}}_{L^2}+\|\nabla u\|^{\frac{6(r-1)}{5r-6}}_{L^2}\|\nabla^2 u\|^{\frac{4r-6}{5r-6}}_{L^r}\\
		\lesssim & \eta\|\nabla^2 u\|_{L^r}+\|u_t\|^{\frac{6-r}{2r}}_{L^2}\|\nabla u_t\|^{\frac{3r-6}{2r}}_{L^2}+\|\nabla u\|^{\frac{6(r-1)}{r}}_{L^2}.
		\end{aligned}$$
		Thanks to $\operatorname{div} u_t=0$, one has
		$$\begin{aligned}
		\|(-\Delta)^{-1}\operatorname{div}F\|_{L^2}\lesssim& \|(-\Delta)^{-1}\operatorname{div}((1-\rho)u_t)\|_{L^2}+\|\rho u\cdot\nabla u\|_{L^{\frac{6}{5}}}\\
		\lesssim & \|1-\rho_0\|_{L^2}\|u_t\|^{\frac{1}{2}}_{L^2}\|\nabla u_t\|^{\frac{1}{2}}_{L^2}+\|u\|^{\frac{1}{2}}_{L^2}\|\nabla u\|^{\frac{3}{2}}_{L^2},
		\end{aligned}$$
		where we have used $L^{\frac{6}{5}}\hookrightarrow \dot{W}^{-1,2}$. Substituting the above inequality into (\ref{d76}) and taking $\eta>0$ sufficiently small, we arrive at
		$$
		\begin{aligned}
		\|\nabla^2 u\|_{L^r}\lesssim \|\nabla u\|_{L^2}+\|u_t\|^{\frac{6-r}{2r}}_{L^2}\|\nabla u_t\|^{\frac{3r-6}{2r}}_{L^2}+\|\nabla u\|^{\frac{6(r-1)}{r}}_{L^2}+\|u_t\|^{\frac{1}{2}}_{L^2}\|\nabla u_t\|^{\frac{1}{2}}_{L^2}+\|u\|^{\frac{1}{2}}_{L^2}\|\nabla u\|^{\frac{3}{2}}_{L^2},
		\end{aligned}
		$$
		and it is easy to observe that
		\begin{align}\label{d78}
		\begin{aligned}
		&\int^{T}_{t_{0}}\|\nabla^{2} u\|_{L^{r}}dt\lesssim \int^{T}_{t_{0}}\|\nabla u\|_{L^2}dt+\int^{T}_{t_{0}}\|u_t\|^{\frac{6-r}{2r}}_{L^2}\|\nabla u_t\|^{\frac{3r-6}{2r}}_{L^2}dt\\&+\int^{T}_{t_{0}}\|\nabla u\|^{\frac{6(r-1)}{r}}_{L^2}dt+\int^{T}_{t_{0}}\|u_t\|^{\frac{1}{2}}_{L^2}\|\nabla u_t\|^{\frac{1}{2}}_{L^2}dt+\int^{T}_{t_{0}}\|u\|^{\frac{1}{2}}_{L^2}\|\nabla u\|^{\frac{3}{2}}_{L^2}dt\stackrel{\text { def }}{=}\sum^{5}_{i=1} I_{i}.
		\end{aligned}
		\end{align}
		First, due to Corollary \ref{Corollary-5.2}  and $\delta_{-}>\frac{1}{2}$, we get 
		$$
		\begin{aligned}
		I_1\leq \|t^{\delta_{-}}\nabla u\|_{L^{2}([t_{0},T];L^{2})}\bigg(\int^{T}_{t_{0}}t^{-2\delta_{-}}dt\bigg)^{\frac{1}{2}}\leq C\mathcal{H}_{0}.
		\end{aligned}
		$$
		Applying Lemma \ref{Lemma-5.5}, we arrive at
		$$
		\begin{aligned}
		I_2\leq &\int^{T}_{t_{0}}\|t\partial_{t}u\|^{\frac{6-r}{2r}}_{L^{2}}\|t\nabla u_{t}\|^{\frac{3(r-2)}{2r}}_{L^{2}}\cdot t^{-1}dt
		\leq \bigg(\sup_{t\in[t_{0},T]}\|t\partial_t u\|_{L^{2}}dt\bigg)^{\frac{6-r}{2r}}\\
		&\times\bigg(\int^{T}_{t_{0}}\|t\nabla u_{t}\|^{2}_{L^{2}}dt\bigg)^{\frac{3(r-2)}{4r}}\cdot\bigg(\int^{T}_{t_{0}}t^{-\frac{4r}{r+6}}dt\bigg)^{\frac{6+r}{4r}}
		\leq \exp\{C\mathcal{H}_{0}\}.
		\end{aligned}
		$$
		Similarly, 
		$$ 
		I_3\leq \sup_{t\in[t_{0},T]}\|\nabla u\|^{\frac{4r-6}{r}}_{L^{2}}\cdot\int^{T}_{t_{0}}\|\nabla u\|^{2}_{L^{2}}dt\leq C.
		$$
		The same estimate holds for $I_4$. Notice that
		$$
		\begin{aligned}
		I_5&\leq \sup_{t\in[t_0,T]}\|t^\delta u\|^{\frac{1}{2}}_{L^{2}}\bigg(\int^{T}_{t_{0}}\|t^{\delta_{-}}\nabla u\|^{2}_{L^{2}}dt\bigg)^{\frac{3}{4}} \bigg(\int^{T}_{t_{0}}t^{-8\delta_{-}}dt\bigg)^{\frac{1}{4}}\leq C\mathcal{H}_{0}.
		\end{aligned}
		$$
		Substituting the estimates of $I_1-I_5$ into the (\ref{d78}), we can deduce 
		\begin{align}\label{d79}
		\begin{aligned}
		\int^{T}_{t_{0}}\|\nabla^{2} u\|_{L^{r}}dt\leq \exp\{C\mathcal{H}_{0}\}.
		\end{aligned}
		\end{align}

		\noindent\textbf{Step 2.} The estimate of $\|\nabla u\|_{L^{1}([t_{0},T];L^p)}.$
		
		By virtue of the Gagliardo-Nirenberg inequality, 
		\begin{align}\label{dd58}
		\|t^{(1-\theta_2)\delta_{-}}\nabla u\|_{L^{2}([t_{0},T];L^{p})}\leq C\|\nabla u\|^{\theta_2}_{L^{2}([t_{0},T];L^{6})}\|t^{\delta_{-}}\nabla u\|^{1-\theta_2}_{L^{2}([t_{0},T];L^{2})}
		\end{align}
		with
		$$
		\begin{aligned}\frac{1}{p}=\frac{\theta_2}{6}+\frac{1-\theta_2}{2}\quad{\rm or}\quad \theta_2=\frac{3(p-2)}{2p}.
		\end{aligned}
		$$
		Thanks to Corollary \ref{Corollary-5.2}, we obtain 
		\begin{align}\label{dd59}
		\|t^{\delta_{-}}\nabla u\|^2_{L^{2}([t_{0},T];L^{2})}\leq C\mathcal{H}_{0},
		\end{align}
		Thanks to $u=v+w$ and Proposition \ref{Proposition-5.1}, one has
		$$
		\int^{T}_{t_{0}}\|\nabla v\|^{2}_{L^{6}}dt\leq \int^{T}_{t_{0}} \|v\|^2_{\dot{B}^2_{2,1}}dt\lesssim \|u_0\|^2_{\dot{B}^{\frac{1}{2}}_{2,1}},
		$$
		Combining Lemma \ref{Lemma-5.2} and Lemma \ref{Lemma-5.3}, we infer that
		$$
		\begin{aligned}
		\int^{T}_{t_{0}}&\|\nabla w\|^{2}_{L^{6}}dt \lesssim \int^{T}_{t_{0}}\|\nabla^2 w\|^{2}_{L^{2}}dt \lesssim\int^{T}_{t_{0}}\|\nabla w\|^{2}_{L^{2}}dt+\int^{T}_{t_{0}}\|\partial_{t}w\|^{2}_{L^{2}}dt\\&+\int^{T}_{t_{0}}\|\nabla^{2}w\|^{6}_{L^{2}}dt+\int^{T}_{t_{0}}\|v\|^2_{\dot{B}^{s_1}_{2,1}}dt
		\lesssim \int^{T}_{t_{0}}\|\nabla w\|^2_{L^{2}}dt+\int^{T}_{t_{0}}\|\partial_{t}w\|^{2}_{L^{2}}dt\\&+\sup_{t\in[t_{0},T]}\|\nabla w\|^{4}_{{L^{2}}}\int^{T}_{t_{0}}\|\nabla w\|^{2}_{L^{2}}dt+\int^{T}_{t_{0}}\|v\|^2_{\dot{B}^{s_1}_{2,1}}dt\lesssim \|u_0\|^2_{\dot{B}^{\frac{1}{2}}_{2,1}},
		\end{aligned}
		$$
		where we used the embedding inequality $\dot{H}^{1}\hookrightarrow L^{6}$ in the first inequality. Thus, we deduce that
		\begin{align}\label{dd60}
		\int^{T}_{t_{0}}\|\nabla u\|^{2}_{L^{6}}dt\leq \int^{T}_{t_{0}}\|\nabla v\|^{2}_{L^{6}}dt+\int^{T}_{t_{0}}\|\nabla w\|^{2}_{L^{6}}dt\lesssim \|u_0\|^2_{\dot{B}^{\frac{1}{2}}_{2,1}}.
		\end{align}
		\par Substituting (\ref{dd59}) and (\ref{dd60}) into (\ref{dd58}) and taking $p\in]2,\frac{6\delta_{-}}{1+\delta_{-}}[$ with $\delta\in]1/2,3/4[$, we get, by using H\"{o}lder's inequality, that
		$$
		\begin{aligned}
		\|\nabla u\|_{L^{1}(L^{p})}&\leq C	\|t^{\frac{6-p}{2p}\delta_{-}}\nabla u\|_{L^{2}([t_{0},T];L^{p})}\bigg(\int^{T}_{t_{0}}t^{\frac{p-6}{p}\delta_{-}}dt\bigg)^{\frac{1}{2}}\\&\leq \|u_0\|^{\frac{3(p-2)}{2p}}_{\dot{B}^{\frac{1}{2}}_{2,1}}\exp\{C\mathcal{H}_{0}\},
		\end{aligned}
		$$
		which along with (\ref{d79}) yields
		$$
		\begin{aligned}
		\|\nabla u\|_{L^{1}([t_{0},T];L^\infty)}&\leq\|\nabla u\|^{\theta_1}_{L^{1}([t_{0},T];L^{p})}\|\nabla^{2} u\|^{1-\theta_1}_{L^{1}([t_{0},T];L^{r})}\\&\leq \|u_{0}\|^{\frac{3\theta_1(p-2)}{2p}}_{\dot{B}^{\frac{1}{2}}_{2,1}}\exp\{C\mathcal{H}_{0}\}.
		\end{aligned}
		$$
		This completes the proof of Lemma \ref{Lemma-5.6}.
	\end{proof}
	\vskip 0.5cm
	\par With Lemma \ref{Lemma-u1}-\ref{Lemma-5.6} at hand, we are in a position to prove Proposition \ref{Proposition-5.5}.
	\vskip 0.5cm
	\noindent\textbf{Proof of Proposition \ref{Proposition-5.5}:}~Since $\mu(\rho)$ satisfies
	\begin{equation}\label{d83}
	\partial_{t}(\mu(\rho))+u\cdot\nabla \mu(\rho)=0,
	\end{equation}
	standard caclulations show that
	\begin{equation}\label{d84}
	\frac{d}{dt}\|\nabla\mu(\rho)\|_{L^{q}}\leq q \|\nabla u\|_{L^{\infty}}\|\nabla\mu(\rho)\|_{L^{q}},
	\end{equation}
	which together with Gronwall's inequality and (\ref{d75}) yields
	\begin{equation}\label{d85}
	\begin{aligned}
	\sup_{t\in[0, T]}\|\nabla\mu(\rho)\|_{L^{q}}
	\leq&\|\nabla \mu(\rho_0)\|_{L^{q}}\exp{\{q\int^{t_0}_{0}\|\nabla u\|_{L^{\infty}}dt+q\int^{T}_{t_0}\|\nabla u\|_{L^{\infty}}dt\}}\\
	\leq& \|\nabla\mu(\rho_0)\|_{L^{q}}\exp\bigg\{C\|u_0\|_{\dot{B}^{\frac{1}{2}}_{2,1}}+\|u_{0}\|^{\frac{3\theta_1(p-2)}{2p}}_{\dot{B}^{\frac{1}{2}}_{2,1}}\exp\{C\mathcal{H}_{0}\}\bigg\}.
	\end{aligned}
	\end{equation}
	Hence, one has
	$$\sup_{t\in[t_{0}, T]}\|\nabla\mu(\rho)\|_{L^{q}}\leq 2\|\nabla\mu(\rho_0)\|_{L^{q}},$$
	provided that
	\begin{equation}\label{d86}
	\|u_0\|_{\dot{B}^{\frac{1}{2}}_{2,1}}\leq\varepsilon_{2}\quad{\rm and}\quad\varepsilon_{2}+\varepsilon^{\frac{3\theta_1(p-2)}{2p}}_{2}\exp\{\mathcal{H}_0\}\overset{\text{def}}{=}C^{-1}\ln 2.
	\end{equation}
	\par Choosing $\varepsilon=\min\{1,\varepsilon_{1},\varepsilon_{2}\}$, we directly obtain (\ref{d40}) from (\ref{d84})-(\ref{d86}). The proof of Proposition \ref{Proposition-5.5} is finished.
	
	\subsection{Proof of Theorem \ref{Theorem-1}}
	
	\par  We first rewrite the momentum equation in $(\ref{a1})_{2}$ as 
	\begin{align}\label{d87}
	\partial_{t}u+u\cdot\nabla u-\mu(\rho)\Delta u+\nabla\pi=(1-\rho)(\partial_t u+u\cdot\nabla u)+2\nabla\mu(\rho)\mathbb{D}u.
	\end{align}
	Applying the operator $\dot{\Delta}_{j}\mathbb{P}$ to (\ref{d87}) gives
	\begin{align}\label{d88}
	\begin{aligned}
	\partial_{t}\dot{\Delta}_{j}u+u\cdot\nabla \dot{\Delta}_{j}u-&\operatorname{div}\{\mu(\rho)\dot{\Delta}_{j}\nabla u\}=\dot{\Delta}_j\mathbb{P}\{(1-\rho)(\partial_t u+u\cdot\nabla u)\} +[u\cdot\nabla;\dot{\Delta}_{j}\mathbb{P}]u\\&-[\mu(\rho);\dot{\Delta}_{j}\mathbb{P}]\Delta u-\nabla\mu(\rho)\cdot\dot{\Delta}_{j}\nabla u+\dot{\Delta}_{j}\mathbb{P}(2\nabla\mu(\rho)\mathbb{D}u).
	\end{aligned}
	\end{align}
	Due to $\operatorname{div}u=0$ and $\mu(\rho)\geq \underline{\mu}$ by multiplying (\ref{d88}) by $\dot{\Delta}_{j}u$ and then integrating the resulting equality over $\mathbb{R}^{3}$, we obtain
	\begin{align}\label{d89}
	\begin{aligned}
	\frac{d}{dt}\|\dot{\Delta}_{j} u\|_{L^{2}}+&2^{2j}\|\dot{\Delta}_{j} u\|_{L^{2}}\lesssim \|\dot{\Delta}_j\{(1-\rho)(\partial_t u+u\cdot\nabla u)\}\|_{L^2}+\|[u\cdot\nabla;\dot{\Delta}_{j}\mathbb{P}]u\|_{L^{2}}\\&+\|[\mu(\rho);\dot{\Delta}_{j}\mathbb{P}]\Delta u\|_{L^{2}}+\|\nabla\mu(\rho)\cdot\dot{\Delta}_{j}\nabla u\|_{L^2}+\|\dot{\Delta}_{j}\mathbb{P}(2\nabla\mu(\rho)\mathbb{D}u)\|_{L^2}.
	\end{aligned}
	\end{align} 
	Intergrating the above inequality over $[t_0,T]$ and multiplying (\ref{d89}) by $2^{j(\frac{1}{2})}$, then summing up the resulting inequality over $j\in\mathbb{Z},$ we achieve 
	\begin{align}\label{d90}
	\begin{aligned}
	&\|u\|_{\widetilde{L}^{\infty}([t_{0},T];\dot{B}^{\frac{1}{2}}_{2,1})}+\|u\|_{L^{1}([t_{0},T];\dot{B}^{\frac{5}{2}}_{2,1})}
	\\\lesssim& \|u(t_{0})\|_{\dot{B}^{\frac{1}{2}}_{2,1}}+\|(1-\rho)(\partial_t u+u\cdot\nabla u)\|_{L^{1}([t_{0},T];\dot{B}^{\frac{1}{2}}_{2,1})}\\&+ \sum_{j\in\mathbb{Z}}2^{\frac{j}{2}}\|[u\cdot\nabla;\dot{\Delta}_{j}\mathbb{P}]u\|_{L^{1}([t_{0},T];L^{2})}+\sum_{j\in\mathbb{Z}}2^{\frac{j}{2}}\|[\mu(\rho);\dot{\Delta}_{j}\mathbb{P}]\Delta u\|_{L^{1}([t_{0},T];L^{2})}\\
	&+\sum_{j\in\mathbb{Z}}2^{\frac{j}{2}}\|\nabla\mu(\rho)\cdot\dot{\Delta}_{j}\nabla u\|_{L^{1}([t_{0},T];L^{2})}+\|\nabla\mu(\rho)\mathbb{D}u\|_{L^{1}([t_{0},T];\dot{B}^{\frac{1}{2}}_{2,1})}.
	\end{aligned}
	\end{align} 
	\par In what follows,~we shall deal with the right-hand side of~(\ref{d90}). Applying Lemma \ref{Lemma-2.2} gives
	$$
	\begin{aligned}
	\|(1-\rho)(\partial_t u+u\cdot\nabla u)\|_{L^{1}([t_{0},T];\dot{B}^{\frac{1}{2}}_{2,1})}
	\lesssim \|1-\rho\|_{L^\infty([t_0,T];\dot{B}^{\frac{1}{2}}_{6,1})}\|\partial_t u+u\cdot\nabla u\|_{L^{1}([t_{0},T];\dot{B}^{\frac{1}{2}}_{2,1})}.
	\end{aligned}$$
	Yet thanks to Lemma \ref{Lemma-5.5} and (\ref{d3}), one has 
	$$
	\begin{aligned}
	\|u_t\|_{L^{1}([t_{0},T];\dot{B}^{\frac{1}{2}}_{2,1})}\leq  \sup_{t\in[t_0,T]}\|t u_t\|^{\frac{1}{2}}_{L^2}\bigg(\int^{T}_{t_0}\|t\nabla u_t\|^{2}_{L^2}dt\bigg)^{\frac{1}{4}}\bigg(\int^{T}_{t_0}t^{-\frac{4}{3}}dt\bigg)^{\frac{3}{4}}\leq \exp\{C\mathcal{H}_{0}\},
	\end{aligned}
	$$
	and
	$$
	\begin{aligned}
	\|u\cdot\nabla u\|_{L^{1}([t_{0},T];\dot{B}^{\frac{1}{2}}_{2,1})}\lesssim \int^{T}_{t_0}(\|\nabla w\|_{L^2}\|\Delta w\|_{L^2}+\|v\|^{2}_{\dot{B}^{\frac{3}{2}}_{2,1}})dt\leq \|u_0\|^2_{\dot{B}^{\frac{1}{2}}_{2,1}}.
	\end{aligned}
	$$
	Whereas thanks to Thenrem 2.87 in \cite{2011BCD}, we have
	$$
	\begin{aligned}
	\|1-\rho\|_{\widetilde{L}^\infty([t_0,T];\dot{B}^{\frac{1}{2}}_{6,1})}&\lesssim \|1-\rho(t_0)\|_{\dot{B}^{\frac{1}{2}}_{6,1}}\exp\bigg\{\int^{T}_{t_0}\|\nabla u\|_{L^{\infty}}dt\bigg\}.
	\end{aligned}
	$$
	Yet thanks to Lemma \ref{Lemma-5.6} and (\ref{d3}), one has
	\begin{align}\label{d91}
	\int^{T}_{t_{0}}\|\nabla u\|_{L^{\infty}}dt\leq C\|u_0\|_{\dot{B}^{\frac{1}{2}}_{2,1}}+\|u_{0}\|^{\frac{3\theta_1(p-2)}{2p}}_{\dot{B}^{\frac{1}{2}}_{2,1}}\exp\{C\mathcal{H}_{0}\}.
	\end{align}
	From which, we can deduce that
	\begin{align}\label{d92}
	\begin{aligned}
	&\|(1-\rho)(\partial_t u+u\cdot\nabla u)\|_{L^{1}([t_{0},T];\dot{B}^{\frac{1}{2}}_{2,1})}\\
	\lesssim& \|1-\rho(t_0)\|_{\dot{B}^{\frac{1}{2}}_{6,1}}\exp\bigg\{C\|u_0\|_{\dot{B}^{\frac{1}{2}}_{2,1}}+\|u_{0}\|^{\frac{3\theta_1(p-2)}{2p}}_{\dot{B}^{\frac{1}{2}}_{2,1}}\exp\{C\mathcal{H}_{0}\}\bigg\}.
	\end{aligned}
	\end{align}
	Similarly, by virtue of Lemma \ref{Lemma-2.3}, 
	\begin{align}\label{d93}
	\sum_{j\in\mathbb{Z}}2^{\frac{j}{2}}\|[u\cdot\nabla;\dot{\Delta}_{j}\mathbb{P}]u\|_{L^{1}([t_{0},T];L^{2})}\lesssim\int^{T}_{t_{0}} \|\nabla u\|_{L^{\infty}}\|u\|_{\dot{B}^{\frac{1}{2}}_{2,1}}dt.
	\end{align}
	For $[\mu(\rho);\dot{\Delta}_{j}\mathbb{P}]\Delta u,$ homogenous Bony's decomposition implies
	$$
	[\mu(\rho);\dot{\Delta}_{j}\mathbb{P}]\Delta u=[T_{\mu(\rho)};\dot{\Delta}_{j}\mathbb{P}]\Delta u+T^{'}_{\dot{\Delta}_{j}\Delta u}\mu(\rho)-\dot{\Delta}_{j}\mathbb{P}(T^{'}_{\Delta u}\mu(\rho)).$$
	It follows again from the above estimate,~which implies that
	$$
	\begin{aligned}
	\sum_{j\in\mathbb{Z}}2^{\frac{j}{2}}\|[T_{\mu(\rho)};\dot{\Delta}_{j}\mathbb{P}]\Delta u\|_{L^{2}}
	\lesssim&\sum_{j\in\mathbb{Z}}\sum_{|j-k|\leq 4}2^{\frac{1}{2}(k-j)}\|\nabla\dot{S}_{k-1}\mu(\rho)\|_{L^{q}}2^{-\frac{k}{2}}\|\dot{\Delta}_{k}\Delta u\|_{L^{q^{*}}}\\
	\lesssim&\sum_{j\in\mathbb{Z}}\sum_{|j-k|\leq 4}2^{\frac{1}{2}(k-j)}\|\nabla \mu(\rho)\|_{L^{q}}2^{(\frac{3}{q}-\frac{1}{2})k}\|\dot{\Delta}_{k}\nabla u\|_{L^{2}}\\
	\lesssim& \|\nabla \mu(\rho)\|_{L^{q}}\|u\|_{\dot {B}^{\frac{3}{q}+\frac{3}{2}}_{2,1}},
	\end{aligned}
	$$
	where $\frac{1}{q}+\frac{1}{q^{*}}=\frac{1}{2}.$ Similarly, one has
	$$
	\begin{aligned}
	\sum_{j\in\mathbb{Z}}2^{\frac{j}{2}}\|T^{'}_{\dot{\Delta}_{j}\Delta u} \mu(\rho)\|_{L^{2}}&\lesssim\sum_{j\in\mathbb{Z}}\sum_{k\geq j-2}2^{\frac{j}{2}}\|\dot{S}_{k+2}\dot{\Delta}_{j}\Delta u\|_{L^{q^{*}}}\|\dot{\Delta}_{k} \mu(\rho)\|_{L^{q}}\\&
	\lesssim\sum_{j\in\mathbb{Z}}2^{-\frac{j}{2}}\|\dot{\Delta}_{j}\Delta u\|_{L^{q^{*}}}\sum_{k\geq j-2}2^{j-k}\|\dot{\Delta}_{k}\nabla \mu(\rho)\|_{L^{q}}\\&
	\lesssim \|u\|_{\dot {B}^{\frac{3}{q}+\frac{3}{2}}_{2,1}}\|\nabla \mu(\rho)\|_{L^{q}}.
	\end{aligned}
	$$
	Notice that
	$$
	\begin{aligned}
	&\sum_{j\in\mathbb{Z}}2^{\frac{j}{2}}\|\dot{\Delta}_{j}\mathbb{P}(T_{\Delta u} \mu(\rho))\|_{L^{2}}\\\lesssim&\sum_{j\in\mathbb{Z}}\sum_{|k-j|\leq 4}2^{\frac{1}{2}(j-k)}2^{-\frac{k}{2}}\|\dot{S}_{k-1}\Delta u\|_{L^{q^{*}}}2^{k}\|\dot{\Delta}_{k}\mu(\rho)\|_{L^{q}}\\
	\lesssim &\|u\|_{\dot {B}^{\frac{3}{q}+\frac{3}{2}}_{2,1}}\|\nabla \mu(\rho)\|_{L^{q}}.
	\end{aligned}
	$$
	The same estimate hold for $\dot{\Delta}_{j}\mathbb{P}\mathcal{R}(\Delta u,\mu(\rho)).$ From which, we obtain
	\begin{equation}\label{d94}
	\begin{aligned}
	&\sum_{j\in\mathbb{Z}}2^{\frac{j}{2}}\|[\mu(\rho);\dot{\Delta}_{j}\mathbb{P}]\Delta u\|_{L^{1}([t_{0},T];L^{2})} \lesssim\int^{T}_{t_{0}}\|u\|_{\dot {B}^{\frac{3}{q}+\frac{3}{2}}_{2,1}}\|\nabla \mu(\rho)\|_{L^{q}}dt.
	\end{aligned}
	\end{equation}
	Along the same line, one has
	\begin{equation}\label{d95}
	\begin{aligned}
	&\sum_{j\in\mathbb{Z}}2^{\frac{j}{2}}\|\nabla\mu(\rho)\cdot\dot{\Delta}_{j}\nabla u\|_{L^{1}([t_{0},T];L^{2})}+\|\nabla\mu(\rho)\mathbb{D}u\|_{L^{1}([t_{0},T];\dot{B}^{\frac{1}{2}}_{2,1})}\\
	\lesssim&\int^{T}_{t_{0}}\|u\|_{\dot {B}^{\frac{3}{q}+\frac{3}{2}}_{2,1}}\|\nabla \mu(\rho)\|_{L^{q}}dt+\int^{T}_{t_{0}}\|\nabla u\|_{L^\infty}\|1-\rho\|_{\dot {B}^{\frac{3}{2}}_{2,1}}dt.
	\end{aligned}
	\end{equation}
	By virtue of Lemma \ref{Lemma-2.4} and (\ref{d79}) with $r=3$, one has
	\begin{align}\label{d96}
	\|1-\rho\|_{\widetilde{L}_{T}^{\infty}(\dot{B}^{\frac{3}{2}}_{2,1})}\leq\|1-\rho(t_0)\|_{\dot{B}^{\frac{3}{2}}_{2,1}}\exp\bigg\{C\int^{T}_{t_0}\|\nabla u\|_{\dot{B}^{1}_{3,\infty}\cap L^\infty}dt\bigg\}.
	\end{align}
	\par Plugging the above estimates into (\ref{d90}) and applying interpolation inequality $\|u\|_{\dot {B}^{\frac{3}{q}+\frac{3}{2}}_{2,1}}\lesssim \|u\|^{\frac{q-3}{2q}}_{\dot{B}^{\frac{1}{2}}_{2,1}}\|u\|^{\frac{q+3}{2q}}_{\dot{B}^{\frac{5}{2}}_{2,1}}$ and Young's inequality, we arrive at
	\begin{align}
	\begin{aligned}
	\|u&\|_{\widetilde{L}^{\infty}([t_{0},T];\dot{B}^{\frac{1}{2}}_{2,1})}+\|u\|_{L^{1}([t_{0},T];\dot{B}^{\frac{5}{2}}_{2,1})}\leq\|u(t_0)\|_{\dot{B}^{\frac{1}{2}}_{2,1}}+\int^{T}_{t_{0}} \|\nabla u\|_{L^{\infty}}\|u\|_{\dot{B}^{\frac{1}{2}}_{2,1}}dt\\
	&\quad+\int^{T}_{t_{0}}\|u\|_{\dot {B}^{\frac{1}{2}}_{2,1}}\|\nabla \mu(\rho)\|^{\frac{2q}{q-3}}_{L^{q}}dt+\|1-\rho(t_0)\|_{\dot {B}^{\frac{3}{2}}_{2,1}}\exp\{\exp\{C\mathcal{H}_{0}\}\},
	\end{aligned}
	\end{align}
	which together with Gronwall's inequality yields 
	\begin{align}\label{d97}
	\begin{aligned}
	\|u\|_{\widetilde{L}^{\infty}([t_{0},T];\dot{B}^{\frac{1}{2}}_{2,1})}+&\|u\|_{L^{1}([t_{0},T];\dot{B}^{\frac{5}{2}}_{2,1})}
	\leq\bigg(\|u(t_0)\|_{\dot{B}^{\frac{1}{2}}_{2,1}}+\|1-\rho(t_0)\|_{\dot{B}^{\frac{3}{2}}_{2,1}}\bigg)\\&\times\exp\bigg\{\{1+T\|\nabla \mu(\rho_0)\|^{\frac{2q}{q-3}}_{L^{q}}\}\exp\{\exp\{C\mathcal{H}_{0}\}\}\bigg\}.
	\end{aligned}
	\end{align} 
	The similar estimates hold for $\nabla\pi$ and $u_t.$ Hence, we completes the proof of Theorem \ref{Theorem-1}.

	\begin{appendix}
		\section{The large time decay-estimate of $u$}
		
		\par In order to get the optimal decay of $u$, we need the following Lemma:
		
		\begin{Lemma}\label{Lemma-A.1}
			Suppose $(\rho,u,\pi)$ is the unique local strong solution to $(\ref{a1})$ satisfying for any $t\in[t_0,T],$
			\begin{equation}\label{e1}
			\|u\|^2_{L^{2}}\leq C\mathcal{H}_{0}\langle t\rangle^{-\frac{1}{2}}\quad {\rm with} \quad \langle t\rangle\stackrel{\mathrm{ def }}{=}1+t,
			\end{equation} 
			then under the assumptions of $(\ref{e1})$ and Theorem \ref{Theorem-1}, we have 
			\begin{equation}\label{e2}
			\sup_{t\in[t_{0},T]}\|\langle t\rangle^{\frac{1}{4}}u\|^2_{L^2}+	\int^{T}_{t_{0}}\|\langle t\rangle^{\frac{1}{4}_{-}}\nabla u\|^2_{L^2}dt\leq C\mathcal{H}_{0},
			\end{equation}
			and
			\begin{equation}\label{e3}
			\begin{aligned}
			\int^{T}_{t_{0}}\|\langle t\rangle^{\frac{1}{2}_{-}}u_{t}\|^{2}_{L^{2}}dt+\sup_{t\in[t_{0},T]}\|\langle t\rangle^{\frac{1}{2}_{-}}\nabla u\|^{2}_{L^{2}}\leq C\mathcal{H}_{0},
			\end{aligned}
			\end{equation}
			where 
			\begin{equation}\label{h}
			\mathcal{H}_{0}\stackrel{\mathrm{ def }}{=}1+\|u(t_{0})\|^{2}_{\dot{H}^{-2\delta}}+\|u(t_{0})\|^{2}_{H^1}(1+\|1-\rho_{0}\|^{2}_{L^{2}}+\|u(t_{0})\|^{2}_{L^{2}}).
			\end{equation}
		\end{Lemma}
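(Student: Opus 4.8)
The plan is to obtain \eqref{e2} from the basic $L^2$-energy identity reinforced by a time weight, the only input being the hypothesis \eqref{e1}, and then to reach \eqref{e3} in two stages: first upgrading the raw $L^2$-decay of $u$ beyond the rate in \eqref{e1} by a Schonbek-type Fourier-splitting argument \cite{1986S} that exploits the $\dot{H}^{-2\delta}$ information contained in $\mathcal{H}_0$, and then closing a time-weighted $\dot{H}^1$-energy estimate in the spirit of Lemma \ref{Lemma-5.4}. Throughout, $\tfrac{1}{4}_{-}$, $\tfrac{1}{2}_{-}$, $1_{-}$ abbreviate $\tfrac{1}{4}-\eta$, $\tfrac{1}{2}-\eta$, $1-2\eta$ for a small fixed $\eta>0$, chosen so that every borderline time integral below converges by a definite margin, and $\|\sqrt{\rho}u\|_{L^2}^2\sim\|u\|_{L^2}^2$ since $\underline{\rho}\le\rho\le\overline{\rho}$.

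\emph{Step 1: the estimate \eqref{e2}.} Taking the $L^2$ inner product of $(\ref{a1})_{2}$ with $u$, exactly as in \eqref{d35}, gives $\tfrac12\tfrac{d}{dt}\|\sqrt{\rho}u\|_{L^2}^2+2\int_{\mathbb{R}^3}\mu(\rho)\,\mathbb{D}u:\mathbb{D}u\,dx=0$; since $\operatorname{div}u=0$ forces $\int|\mathbb{D}u|^2=\tfrac12\|\nabla u\|_{L^2}^2$ and $\mu(\rho)\ge\underline{\mu}$, this is a differential inequality $\tfrac{d}{dt}\|\sqrt{\rho}u\|_{L^2}^2+\underline{\mu}\|\nabla u\|_{L^2}^2\le0$. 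Multiplying by $\langle t\rangle^{\frac{1}{2}_{-}}$ and integrating over $[t_0,t]$ leaves the weighted dissipation $\int_{t_0}^t\langle s\rangle^{\frac{1}{2}_{-}}\|\nabla u\|_{L^2}^2\,ds$ on the left and the forcing $\int_{t_0}^t\langle s\rangle^{-\frac{1}{2}-\eta}\|\sqrt{\rho}u\|_{L^2}^2\,ds$ on the right; by \eqref{e1} the latter is $\lesssim\mathcal{H}_0\int_{t_0}^\infty\langle s\rangle^{-1-\eta}\,ds\lesssim\mathcal{H}_0$, while $\langle t_0\rangle^{\frac{1}{2}_{-}}\|\sqrt{\rho}u(t_0)\|_{L^2}^2\lesssim\mathcal{H}_0$ because $t_0<1$. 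Together with $\sup_t\langle t\rangle^{1/2}\|u\|_{L^2}^2\le C\mathcal{H}_0$, which is nothing but \eqref{e1}, this gives \eqref{e2}.

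\emph{Step 2: the estimate \eqref{e3}.} The rate $\langle t\rangle^{-1/2}$ in \eqref{e1} is too slow to power a weighted $\dot{H}^1$-estimate carrying the weight $\langle t\rangle^{1_{-}}$ demanded by \eqref{e3}, so I would first improve the $L^2$-decay of $u$. Splitting $\|\nabla u\|_{L^2}^2=\int|\xi|^2|\hat{u}|^2\,d\xi$ at a threshold $g(t)$, using $\int_{|\xi|\le g(t)}|\hat{u}|^2\,d\xi\le g(t)^{4\delta}\|u\|_{\dot{H}^{-2\delta}}^2$ and the propagated $\dot{H}^{-2\delta}$ control of $u$ (Proposition \ref{Proposition-4.3}), the energy inequality $\tfrac{d}{dt}\|\sqrt{\rho}u\|_{L^2}^2+\underline{\mu}\|\nabla u\|_{L^2}^2\le0$ self-improves, by Schonbek's method, to $\|u\|_{L^2}^2\lesssim\mathcal{H}_0\langle t\rangle^{-2\delta}$; since $2\delta>1$, I record this as $\|u\|_{L^2}^2\lesssim\mathcal{H}_0\langle t\rangle^{-1-\epsilon_1}$ for some $\epsilon_1>0$. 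Re-running Step 1 with the heavier weight $\langle t\rangle^{1_{-}}$, this improved decay makes the forcing $\int_{t_0}^\infty\langle s\rangle^{-2\eta}\langle s\rangle^{-1-\epsilon_1}\,ds$ converge, so $\int_{t_0}^T\langle t\rangle^{1_{-}}\|\nabla u\|_{L^2}^2\,dt\lesssim\mathcal{H}_0$. Finally I would test $(\ref{a1})_{2}$ against $u_t$ exactly as in \eqref{dd52}--\eqref{dd53}, using Corollary \ref{Corollary-5.1} to bound $\|\nabla^2 u\|_{L^2}$ and the smallness $\sup_t\|\nabla u\|_{L^2}^2\le2C\|u_0\|_{\dot{B}^{\frac{1}{2}}_{2,1}}^2$ (Lemma \ref{Lemma-u}, Proposition \ref{Proposition-5.5}) to absorb the convection and the variable-viscosity terms $|\nabla\mu(\rho)|\,|u|\,|\nabla u|^2$, arriving at $\tfrac{d}{dt}\|\nabla u\|_{L^2}^2+c\|u_t\|_{L^2}^2\lesssim\|\nabla u\|_{L^2}^3+\|\nabla u\|_{L^2}^4+\|\nabla u\|_{L^2}^6\lesssim\|\nabla u\|_{L^2}^2$. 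Multiplying by $\langle t\rangle^{1_{-}}$ and integrating over $[t_0,T]$, the right-hand side contributes $\int_{t_0}^T\langle t\rangle^{-2\eta}\|\nabla u\|_{L^2}^2\,dt\le\int_{t_0}^T\|\nabla u\|_{L^2}^2\,dt\lesssim\mathcal{H}_0$ (Lemma \ref{Lemma-u1}) and $\int_{t_0}^T\langle t\rangle^{1_{-}}\|\nabla u\|_{L^2}^2\,dt\lesssim\mathcal{H}_0$ from the previous step; Gronwall's lemma then yields $\sup_{t\in[t_0,T]}\langle t\rangle^{1_{-}}\|\nabla u\|_{L^2}^2+\int_{t_0}^T\langle t\rangle^{1_{-}}\|u_t\|_{L^2}^2\,dt\lesssim\mathcal{H}_0$, that is, \eqref{e3}.

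\emph{Main obstacle.} The real difficulty is \eqref{e3}: it cannot be deduced from \eqref{e1} alone, because the weight $\langle t\rangle^{1_{-}}$ on $\|\nabla u\|_{L^2}^2$ is equivalent to an $L^2$-decay of $u$ strictly faster than $\langle t\rangle^{-1}$, whereas \eqref{e1} only supplies $\langle t\rangle^{-1/2}$. The way around this is to bring in the negative-regularity information $u\in\dot{H}^{-2\delta}$ with $2\delta>1$ through the Fourier-splitting argument above — which is exactly why $\|u(t_0)\|_{\dot{H}^{-2\delta}}^2$ was built into $\mathcal{H}_0$ — and this in turn requires a uniform-in-$T$ control of $\|u\|_{\dot{H}^{-2\delta}}$ and a careful bookkeeping of the strict exponents $\tfrac{1}{4}_{-}$, $\tfrac{1}{2}_{-}$, $1_{-}$ so that no borderline time integral becomes logarithmically divergent. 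A secondary point is that the variable-viscosity nonlinearity $|\nabla\mu(\rho)|\,|u|\,|\nabla u|^2$ is out of reach of a naive product estimate and must be handled through the Stokes regularity bound of Corollary \ref{Corollary-5.1}.
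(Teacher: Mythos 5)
Your Step 1 is exactly the paper's argument for \eqref{e2}: multiply the energy identity \eqref{d35} by $\langle t\rangle^{\frac12_-}$, control the weight-derivative term $\langle t\rangle^{-\frac12-\eta}\|u\|_{L^2}^2$ by the hypothesis \eqref{e1}, and read off the $\langle t\rangle^{\frac14}$-sup from \eqref{e1} itself; no issue there. The gap is in Step 2. You deduce \eqref{e3} only after upgrading the decay to $\|u\|_{L^2}^2\lesssim\mathcal{H}_0\langle t\rangle^{-2\delta}$ by Fourier splitting, and for that you invoke ``the propagated $\dot H^{-2\delta}$ control of $u$ (Proposition \ref{Proposition-4.3})''. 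That proposition does not deliver a uniform-in-time bound: its right-hand side carries a factor $t^{\frac14}$ and an exponential in $t$, and in the paper it is only applied on $[0,1]$ to produce $u(t_0)\in\dot H^{-2\delta}$ (Corollary \ref{Corollary-3.1}); so the splitting inequality $\int_{|\xi|\le g(t)}|\hat u|^2d\xi\le g(t)^{4\delta}\|u(t)\|_{\dot H^{-2\delta}}^2$ has no admissible constant on $[t_0,T]$. Moreover, in the paper the decay rate $\langle t\rangle^{-2\delta}$ is Proposition \ref{Proposition-A.1}, proved \emph{after} Lemma \ref{Lemma-A.1}, and for the relevant range $\delta\in(\frac12,\frac34)$ its proof uses precisely \eqref{e2}--\eqref{e3} to bound the Duhamel contribution of $(1-\rho)(\partial_t u+u\cdot\nabla u)$ (the estimates of $\int_{t_0}^t\|\partial_t u\|_{L^2}ds$ and $\int_{t_0}^t\|u\|_{L^2}\|\nabla u\|_{L^2}ds$); with only the unweighted bounds of Lemma \ref{Lemma-u} one gets no better than $\langle t\rangle^{-\frac12}$. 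So importing that decay into the proof of Lemma \ref{Lemma-A.1} is circular unless you give an independent proof of it, which your sketch does not.

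The detour is also unnecessary, and your ``main obstacle'' is an artifact of the crude reduction $\|\nabla u\|_{L^2}^3+\|\nabla u\|_{L^2}^4+\|\nabla u\|_{L^2}^6\lesssim\|\nabla u\|_{L^2}^2$, which leaves the full weight $\langle t\rangle^{1_-}$ on $\|\nabla u\|_{L^2}^2$ and hence forces you to need $\int_{t_0}^T\langle t\rangle^{1_-}\|\nabla u\|_{L^2}^2dt<\infty$. The paper never needs that quantity: multiplying \eqref{dd53} by $\langle t\rangle^{1_-}$, the weight-derivative term is $\langle t\rangle^{(1_-)-1}\|\nabla u\|_{L^2}^2$, integrable by the plain energy inequality; the cubic term is split as $\bigl(\langle t\rangle^{\frac12_-}\|\nabla u\|_{L^2}\bigr)\bigl(\langle t\rangle^{\frac14_-}\|\nabla u\|_{L^2}\bigr)^2$, the first factor being absorbed into the left-hand sup by Young and the second integrated using \eqref{e2}; the quartic and sextic terms are written as $\bigl(\langle t\rangle^{1_-}\|\nabla u\|_{L^2}^2\bigr)\|\nabla u\|_{L^2}^2$ and $\bigl(\langle t\rangle^{1_-}\|\nabla u\|_{L^2}^2\bigr)\|\nabla u\|_{L^2}^4$ and handled by Gronwall, since $\int\|\nabla u\|_{L^2}^2dt$ and $\int\|\nabla u\|_{L^2}^4dt$ are already bounded. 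Thus \eqref{e3} follows from \eqref{e1} and \eqref{e2} together with the unweighted estimates of Section 4, contrary to your claim that it cannot; redistributing the weights in this way is the step your proposal is missing.
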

		\begin{proof}
			We first get by using standard energy estimate to the $u$ equation of (\ref{a1}) that
			\begin{equation}\label{e4}
			\frac{d}{dt}\|\sqrt{\rho}u\|^2_{L^2}+2\underline{\mu}\|\nabla u\|^2_{L^2}\leq 0.
			\end{equation}
			Multiplying (\ref{e4}) by $\langle t\rangle^{\frac{1}{2}_{-}},$ one has
			\begin{equation}\label{ee2}
			\begin{aligned}
			&\|\langle t\rangle^{\frac{1}{4}_{-}}u\|^2_{L^\infty([t_0,T];L^2)}+\|\langle t\rangle^{\frac{1}{4}_{-}}\nabla u\|^2_{L^2([t_0,T];L^2)}\\
			\lesssim& \|u(t_0)\|^2_{L^2}+\int^{T}_{t_{0}}\langle t\rangle^{(-1)_{-}}\|\langle t\rangle^{\frac{1}{4}}u(t)\|^{2}_{L^{2}}dt\lesssim\mathcal{H}_{0}.
			\end{aligned}
			\end{equation}
			Multiplying (\ref{dd53}) by $\langle t\rangle^{1_{-}}$ and then integrating the resulting inequality over $[t_{0},T]$, we get, by applying (\ref{ee2}), that
			\begin{equation}
			\begin{aligned}
			&\int^{T}_{t_{0}}\|\langle t\rangle^{\frac{1}{2}_{-}}u_{t}\|^{2}_{L^{2}}dt+\sup_{t\in[t_{0},T]}\|\langle t\rangle^{\frac{1}{2}_{-}}\nabla u\|^{2}_{L^{2}}
			\lesssim  \|\nabla u(t_0)\|^2_{L^2}\\&
			\quad+\int^{T}_{t_{0}}\langle t\rangle^{(1_{-})-1}\|\nabla u\|^{2}_{L^{2}}dt+\int^{T}_{t_{0}}\|\langle t\rangle^{\frac{1}{2}_{-}}\nabla u\|_{L^{2}}\|\langle t\rangle^{\frac{1}{4}_{-}}\nabla u\|^{2}_{L^{2}}dt\\&\quad+\int^{T}_{t_{0}}\|\langle t\rangle^{\frac{1}{2}_{-}}\nabla u\|^2_{L^{2}}\|\nabla u\|^{2}_{L^{2}}dt+\int^{T}_{t_{0}}\|\langle t\rangle^{\frac{1}{2}_{-}}\nabla u\|^2_{L^{2}}\|\nabla u\|^{4}_{L^{2}}dt.
			\end{aligned}
			\end{equation}
			Hence, one has
			$$\int^{T}_{t_{0}}\|\langle t\rangle^{\frac{1}{2}_{-}}u_{t}\|^{2}_{L^{2}}dt+\sup_{t\in[t_{0},T]}\|\langle t\rangle^{\frac{1}{2}_{-}}\nabla u\|^{2}_{L^{2}}\leq C\mathcal{H}_{0},$$
		\end{proof}
		
		\begin{Proposition}\label{Proposition-A.1}
			Under the assumption of Theorem \ref{Theorem-1}, we have
			\begin{equation}\label{e13}
			\|u\|^{2}_{L^{2}}\leq C\mathcal{H}_{0}\langle t\rangle ^{-2\delta} \quad{\rm for~any}~ t\in[t_{0},T],
			\end{equation}
			where $\mathcal{H}_{0}$ given by (\ref{h}).
		\end{Proposition}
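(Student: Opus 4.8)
The plan is to run a bootstrap (continuity) argument on the decay rate of $\|u\|_{L^2}^2$, starting from the slow rate $\langle t\rangle^{-1/2}$ supplied by Lemma \ref{Lemma-A.1} and iterating up to $\langle t\rangle^{-2\delta}$. The engine is the Fourier-splitting method of Schonbek \cite{1986S} combined with the negative-Sobolev regularity $u\in\widetilde L^\infty_T(\dot H^{-2\delta})$ propagated in Proposition \ref{Proposition-4.3} and Corollary \ref{Corollary-3.1}. First I would recall the basic energy identity (\ref{e4}), $\frac{d}{dt}\|\sqrt\rho u\|_{L^2}^2+2\underline\mu\|\nabla u\|_{L^2}^2\le 0$, and for a radius $R(t)$ to be chosen, split the dissipation as
\begin{equation}\label{e-split}
\|\nabla u\|_{L^2}^2\gtrsim R(t)^2\int_{|\xi|\ge R(t)}|\widehat u|^2\,d\xi \gtrsim R(t)^2\Big(\|u\|_{L^2}^2-\int_{|\xi|\le R(t)}|\widehat u|^2\,d\xi\Big).
\end{equation}
The low-frequency piece is controlled by the $\dot H^{-2\delta}$ bound: $\int_{|\xi|\le R}|\widehat u|^2\,d\xi\le R^{4\delta}\|u\|_{\dot H^{-2\delta}}^2\lesssim \mathcal H_0\,R^{4\delta}$, using $\|u\|_{\widetilde L^\infty_T(\dot H^{-2\delta})}\lesssim\|u(t_0)\|_{\dot H^{-2\delta}}+\cdots\lesssim\mathcal H_0^{1/2}$ from the earlier a priori estimates.

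With $\theta(t)=\|\sqrt\rho u\|_{L^2}^2$, plugging (\ref{e-split}) into (\ref{e4}) and choosing $R(t)^2=\beta\langle t\rangle^{-1}$ for a suitable constant $\beta$, I would obtain a differential inequality of the form
\begin{equation}\label{e-diff}
\frac{d}{dt}\big(\langle t\rangle^{\beta'}\theta\big)\lesssim \langle t\rangle^{\beta'-1-2\delta}\,\mathcal H_0
\end{equation}
after multiplying by the integrating factor $\langle t\rangle^{\beta'}$ with $\beta'$ tied to $\beta$; integrating from $t_0$ to $t$ and using $\underline\rho\le\rho\le\overline\rho$ to pass between $\theta$ and $\|u\|_{L^2}^2$ then yields $\|u(t)\|_{L^2}^2\lesssim\mathcal H_0\langle t\rangle^{-\gamma_1}$ with some $\gamma_1>\tfrac12$. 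The key point is that the exponent improves at each pass: feeding the new rate $\langle t\rangle^{-\gamma_k}$ back into the low-frequency term is not needed — rather, it is the choice $R(t)^2\sim \langle t\rangle^{-1}$ together with the \emph{fixed} $\dot H^{-2\delta}$ regularity that, upon integrating (\ref{e-diff}) with progressively larger integrating-factor exponents, pushes the decay rate towards $2\delta$; one iterates finitely many times (the gain per step being bounded below) until the rate saturates at $\min\{2\delta,\,\text{something}\}=2\delta$ since $\delta<3/4$. At each stage one must also absorb the contribution of the low-frequency remainder, which is $O(\mathcal H_0\langle t\rangle^{-2\delta})$ and hence consistent with the target.

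The main obstacle, and the step I would spend the most care on, is bookkeeping the bootstrap so that the constants stay uniform in $T$ and genuinely only depend on $\mathcal H_0$ (hence on $\underline\mu,\overline\mu,\|\rho_0-1\|_{B^{3/2}_{2,1}},\|u_0\|_{\dot H^{-2\delta}}$ through (\ref{h})) — in particular verifying that the density factor $\sqrt\rho$ in $\theta$ does not spoil the Fourier splitting (one works with $\|u\|_{L^2}$ directly, using $\theta\simeq\|u\|_{L^2}^2$, and treats the extra term $\frac{d}{dt}\int(\rho-1)|u|^2$ only implicitly through the clean identity (\ref{e4})) and checking that the interpolation $\|u\|_{\dot H^{1/4}}\lesssim\|u\|_{L^2}^{?}\|\nabla u\|_{L^2}^{?}$ type bounds from Lemma \ref{Lemma-A.1} close the loop. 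A secondary technical point is that one only has $u\in\widetilde L^\infty_T(\dot H^{-2\delta})$ rather than a conserved quantity, so the constant in $\int_{|\xi|\le R}|\widehat u|^2\le R^{4\delta}\|u\|_{\dot H^{-2\delta}}^2$ must be taken as the sup over $[t_0,T]$, which is exactly what the earlier propagation results provide. Once the rate $\langle t\rangle^{-2\delta}$ is reached the argument terminates, giving (\ref{e13}).
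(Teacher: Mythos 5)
Your plan hinges on the low--frequency bound
$\int_{|\xi|\le R}|\widehat u(t,\xi)|^{2}\,d\xi\le R^{4\delta}\|u(t)\|_{\dot H^{-2\delta}}^{2}\lesssim\mathcal H_{0}R^{4\delta}$,
i.e.\ on a \emph{uniform-in-time} propagation of the $\dot H^{-2\delta}$ norm of $u$ on $[t_{0},T]$, which you attribute to Proposition \ref{Proposition-4.3} and Corollary \ref{Corollary-3.1}. That bound is not available: the estimate \eqref{c56} in Proposition \ref{Proposition-4.3} carries the factors $t^{\frac14}2^{(\frac32-2\delta)k}$ and, worse, $\exp\{\|v\|_{L^{1}_{t}(\dot B^{2}_{3,1})}+t2^{5k}\|(b,\lambda)\|^{2}_{\widetilde L^{\infty}_{t}(L^{2})}\}$, so it degenerates as $t\to\infty$; in the paper it is used only on the unit time interval to produce a single time $t_{0}\in(0,1)$ with $u(t_{0})\in\dot H^{-2\delta}$ (Corollary \ref{Corollary-3.1}). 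A global-in-time bound on $\|u(t)\|_{\dot H^{-2\delta}}$ with $2\delta\in(1,\tfrac32)$ is in fact of the same level of difficulty as the decay estimate itself (its proof would again require decay of $u$ to control the nonlinearity), so assuming it makes the argument circular. This is the genuine gap: without it, your Fourier-splitting inequality has no admissible control of $\int_{S(t)}|\widehat u|^{2}$.

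The paper avoids this by never using a negative Sobolev norm of $u(t)$ for $t>t_{0}$. After the splitting step \eqref{e15} it represents $u(t)$ by Duhamel's formula from time $t_{0}$ and estimates $\widehat u(t,\xi)$ pointwise on $S(t)$: the heat semigroup acting on $u(t_{0})$ yields the $\langle t\rangle^{-2\delta}\|u(t_{0})\|^{2}_{L^{2}\cap\dot H^{-2\delta}}$ contribution, while the source terms $(\mu(\rho)-1)\mathbb{D}u$, $u\otimes u$ and $(1-\rho)(\partial_{t}u+u\cdot\nabla u)$ are bounded through $\|\mathcal F_{x}f\|_{L^{\infty}_{\xi}}\le\|f\|_{L^{1}_{x}}$ together with the energy and time-weighted estimates. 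This is also why the bootstrap has exactly two stages and why Lemma \ref{Lemma-A.1} is needed: the crude estimate of the Duhamel nonlinearity caps the first pass at $\langle t\rangle^{-1/2}$, and only after feeding this intermediate decay (through the weighted bounds \eqref{e2}--\eqref{e3}) back into the source terms does the second pass reach $\langle t\rangle^{-2\delta}$ for $\delta\in(\tfrac14,\tfrac34)$. Your sketch, by contrast, asserts that the nonlinear, variable-density and variable-viscosity terms never enter the low-frequency estimate and that the rate improves by merely enlarging the integrating-factor exponent; with your claimed (but unavailable) low-frequency bound a single pass would already give the rate, and without it no pass closes. To repair the proposal you would either have to prove the uniform propagation of $\|u(t)\|_{\dot H^{-2\delta}}$ (essentially redoing the decay analysis) or switch to the Duhamel-based control of the low frequencies as the paper does.
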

		\begin{proof} 
			Taking $L^{2}$ inner product of $(\ref{a1})_{1}$ with $u,$ and using the fact $\operatorname{div}u=0,$ we obtain
			\begin{equation}\label{e14}
			\frac{d}{dt}\|\sqrt{\rho}u\|^2_{L^2}+2\underline{\mu}\|\nabla u\|^{2}_{L^{2}}\leq 0.
			\end{equation}
			Applying Schonbek's strategy \cite{1986S}, by splitting the space~$\mathbb{R}^{3}$~into the time-dependent domain
			$$\mathbb{R}^{2}=S(t)\cup S^{c}(t),$$
			where $S(t)\stackrel{\text { def }}{=}\{{\xi\in\mathbb{R}^{2}:|\xi|\leq \sqrt{\frac{\bar{\rho}}{2\underline{\mu}}} r(t)}\}$ for some $r(t),$ will be chosen later on.
			\par Then we deduce from~$(\ref{a1})_{2}$~that
			\begin{equation}\label{e15}
			\frac{d}{dt}\|\sqrt{\rho}u\|^{2}_{L^{2}}+r^{2}(t)\|\sqrt{\rho}u\|^{2}_{L^{2}}\leq C r^{2}(t)\int_{S(t)}|\hat{u}(t,\xi)|^{2}d\xi.
			\end{equation}
			By the~Duhamel~principle, we can write $(\ref{a1})_{2}$ as the following integral form:
			$$\begin{aligned}
			u(t)=e^{(t-t_{0})\Delta}u(t_{0})+&\int^{t}_{t_{0}}e^{(t-s)\Delta}\mathbb{P}(\operatorname{div}(2(\mu(\rho)-1)\mathbb{D}u)\\&+(1-\rho)\partial_t u-\rho (u\cdot\nabla )u)ds.
			\end{aligned}$$
			Taking the Fourier transform with respect to~$x$~variables to the above equation, one has
			\begin{equation}\label{e16}
			\begin{aligned}
			|\hat u(t,\xi)|\leq& e^{-(t-t_{0})|\xi|^{2}}|\hat{u}(t_{0},\xi)|+\int^{t}_{t_{0}}e^{-(t-s)|\xi|^{2}}|\xi||\mathcal{F}_{x}(2(\mu(\rho)-1)\mathbb{D}u\\&-u\otimes u)|ds+\int^{t}_{t_{0}}e^{-(t-s)|\xi|^{2}}|\mathcal{F}_{x}((1-\rho)(\partial_t u+ u\cdot\nabla u))|ds,
			\end{aligned}
			\end{equation}
			so that
			\begin{equation}\label{e17}
			\begin{aligned}
			\int_{S(t)} |\hat u(t,\xi)|^{2}d\xi
			\lesssim& \int_{S(t)}e^{-2(t-t_{0})|\xi|^{2}}|\hat{u}(t_{0},\xi)|^{2}d\xi+\int_{S(t)}|\xi|^{2}(\int^{t}_{t_{0}}|\mathcal{F}_{x}((\mu(\rho)-1)\mathbb{D}u)|ds)^{2}d\xi\\&+\int_{S(t)}|\xi|^{2}(\int^{t}_{t_{0}}|\mathcal{F}_{x}(u\otimes u)|ds)^{2}d\xi+\int_{S(t)}(\int^{t}_{t_{0}}|\mathcal{F}_{x}((1-\rho)(\partial_t u+ u\cdot\nabla u))|ds)^{2}d\xi.
			\end{aligned}
			\end{equation}
			We now estimate term by term above. It is easy to observe that
			$$
			\int_{S(t)}e^{-2(t-t_{0})|\xi|^{2}}|\hat{u}(t_{0},\xi)|^{2}d\xi\lesssim \langle t\rangle^{-2\delta }\|u(t_{0})\|^{2}_{L^{2}\cap\dot{H}^{-2\delta }},
			$$
			and
			$$
			\begin{aligned}
			&\int_{S(t)}|\xi|^{2}(\int^{t}_{t_{0}}|\mathcal{F}_{x}((\mu(\rho)-1)\mathbb{D}u)|ds)^{2}d\xi\\
			\lesssim& r^{5}(t)(\int^{t}_{t_{0}}\|(\mu(\rho)-1)\mathbb{D}u\|_{L^{1}}dt)^{2}\lesssim r^{5}(t)(\int^{t}_{t_{0}}\|\mu(\rho)-1\|_{L^{2}}\|\nabla u\|_{L^{2}}dt)^{2}.
			\end{aligned}
			$$
			Similarly,
			$$
			\begin{aligned}
			\int_{S(t)}|\xi|^{2}(\int^{t}_{t_{0}}|\mathcal{F}_{x}(u\otimes u)|ds)^{2}d\xi&\lesssim r^{5}(t)(\int^{t}_{t_{0}}\|u\otimes u\|_{L^{1}}ds)^{2}\\&\lesssim r^{5}(t)(\int^{t}_{t_{0}}\|u\|^{2}_{L^{2}}ds)^{2}.
			\end{aligned}
			$$
			Whereas by virtue of (\ref{u}), we write
			$$(\int^t_{t_0}\|\partial_t u\|_{L^2}ds)^2\leq \langle t\rangle\int^{t}_{t_0}\|\partial_t u\|^2_{L^2}ds\lesssim \langle t\rangle,$$
			and
			$$(\int^{t}_{t_0}\|u\|_{L^2}\|\nabla u\|_{L^2}ds)^2\leq \int^{t}_{t_0}\|u\|^2_{L^2}ds\cdot\int^{t}_{t_0}\|\nabla u\|^2_{L^2}ds\lesssim \langle t\rangle \|u(t_0)\|^4_{L^2}, $$
			which gives
			$$
			\begin{aligned}
			&\int_{S(t)}(\int^{t}_{t_{0}}|\mathcal{F}_{x}((1-\rho)(\partial_t u+ u\cdot\nabla u))|ds)^{2}d\xi\lesssim r^3(t)\langle t\rangle(\|1-\rho_0\|^2_{L^2}\|\nabla u(t_0)\|^2_{L^2}+\|u(t_0)\|^4_{L^2}).
			\end{aligned}
			$$
			Resunming the above esitimates into (\ref{e17}) yields
			$$
			\begin{aligned}
			\int_{S(t)} |\hat u(t,\xi)|^{2}d\xi
			\lesssim &\langle t\rangle^{-2\delta }\|u(t_{0})\|^{2}_{L^{2}\cap\dot{H}^{-2\delta }}+r^{5}(t)\langle t\rangle\|\mu(\rho)-1\|^{2}_{L^{\infty}_{t}(L^{2})}\|u(t_{0})\|^{2}_{L^{2}}\\&+ r^{5}(t)\langle t\rangle^{2}\|u(t_{0})\|^{4}_{L^{2}}+r^3(t)\langle t\rangle(\|\mu(\rho_0)-1\|^2_{L^2}\|\nabla u(t_0)\|^2_{L^2}+\|u(t_0)\|^4_{L^2}).
			\end{aligned}
			$$
			Inserting the resulting inequality into (\ref{e15}), we get
			\begin{equation}\label{e18}
			\begin{aligned}
			&\frac{d}{dt}\|\sqrt{\rho}u\|^{2}_{L^{2}}+r^{2}(t)\|\sqrt{\rho}u\|^{2}_{L^{2}}
			\lesssim \mathcal{H}_{0}\{r^{2}(t)\langle t\rangle^{-2\delta }+r^{7}(t)\langle t\rangle+r^{7}(t)\langle t\rangle^{2}+r^{5}(t)\langle t\rangle\},
			\end{aligned}
			\end{equation}
			where $$\mathcal{H}_{0}\stackrel{\text { def }}{=}1+\|u(t_{0})\|^{2}_{\dot{H}^{-2\delta}}+\|u(t_{0})\|^{2}_{H^1}(1+\|1-\rho_0\|^{2}_{L^{2}}+\|u(t_{0})\|^{2}_{L^{2}}).$$
			From which, we can deduce that
			\begin{equation}\label{e19}
			\begin{aligned}
			\frac{d}{dt}\{e^{\int^{t}_{t_{0}}r^{2}(t)ds}\|\sqrt{\rho}u\|^{2}_{L^{2}}\}
			\lesssim \mathcal{H}_{0}e^{\int^{t}_{t_{0}}r^{2}(t)ds}\{r^{2}(t)\langle t\rangle^{-2\delta }+r^{7}(t)\langle t\rangle+r^{7}(t)\langle t\rangle^{2}+r^{5}(t)\langle t\rangle\}.
			\end{aligned}
			\end{equation}
			\par Taking $r^{2}(t)=\beta \langle t\rangle^{-1},$~(with $\beta>2\delta$) in the case when $\delta\in(0,\frac{1}{4}]$, we deduce from (\ref{e19}) that
			$$
			\begin{aligned}
			\frac{d}{dt}\{\langle t\rangle^{\beta}\|\sqrt{\rho}u\|^{2}_{L^{2}}\}&\lesssim \mathcal{H}_{0}\langle t\rangle ^{\beta}\{\langle t\rangle^{-1-2\delta}+\langle t\rangle^{-\frac{5}{2}}+\langle t\rangle^{-\frac{3}{2}}\}\\
			&\lesssim \mathcal{H}_{0}\langle t\rangle ^{-1-2\delta+\beta},
			\end{aligned}
			$$
			and integrating the results over~$[t_{0},t]$~one has
			\begin{equation}\label{e20}
			\|u\|^{2}_{L^{2}}\lesssim \mathcal{H}_{0}\langle t\rangle ^{-2\delta}.
			\end{equation}
			\par In the case  when $\delta\in(\frac{1}{4},\frac{3}{4}),$ we already have
			$$
			\|u\|^{2}_{L^{2}}\lesssim \mathcal{H}_{0}\langle t\rangle^{-\frac{1}{2}},
			$$
			from which, we infer
			$$
			\begin{aligned}
			(\int^{t}_{t_{0}}\|\mathcal{F}_{x}(u\otimes u)\|_{L^{\infty}_{\xi}}ds)^{2}&\lesssim (\int^{t}_{t_{0}}\|u\otimes u\|_{L^{1}}ds)^{2}\lesssim (\int^{t}_{t_{0}}\|u\|^{2}_{L^{2}}ds)^{2}\lesssim\mathcal{H}_{0}\langle t\rangle.
			\end{aligned}
			$$
			By virtue of Lemma \ref{Lemma-A.1}, we infer
			$$(\int^t_{t_0}\|\partial_t u\|_{L^2}ds)^2\leq \langle t\rangle^{1-(1_{-})}\int^{t}_{t_0}\|\langle s\rangle^{\frac{1}{2}_{-}}\partial_{t}u(s)\|^2_{L^2}ds\lesssim\mathcal{H}_{0} \langle t\rangle^{1-(1_{-})},$$
			and
			$$
			\begin{aligned}
			(\int^{t}_{t_0}\|u\|_{L^2}\|\nabla u\|_{L^2}ds)^2
			\leq& \mathcal{H}_{0}\int^{t}_{t_0}\langle s\rangle^{-(1_{-})}ds\cdot\sup_{t\in[t_0,t]}\langle t\rangle^{\frac{1}{2}}\|u\|^2_{L^2}\cdot\int^{t}_{t_0}\|\langle s\rangle^{\frac{1}{4}_{-}}\nabla u\|^2_{L^2}ds\\
			\lesssim& \mathcal{H}_{0}\langle t\rangle^{1-(1_{-})} ,
			\end{aligned}
			$$
			which gives
			$$
			\begin{aligned}
			\int_{S(t)}(\int^{t}_{t_{0}}|\mathcal{F}_{x}((1-\rho)(\partial_t u+ u\cdot\nabla u))|ds)^{2}d\xi\lesssim \mathcal{H}_{0}r^3(t)\langle t\rangle^{1-(1_{-})}.
			\end{aligned}
			$$
			Hence a similar derivation of (\ref{e18}) leads to
			\begin{equation}\label{e21}
			\begin{aligned}
			&\frac{d}{dt}\|\sqrt{\rho}u\|^{2}_{L^{2}}+r^{2}(t)\|\sqrt{\rho}u\|^{2}_{L^{2}}\\
			\leq &C\mathcal{H}_{0}\{r^{2}(t)\langle t\rangle^{-2\delta }+r^{7}(t)\langle t\rangle+r^{5}(t)\langle t\rangle^{1-(1_{-})}\},
			\end{aligned}
			\end{equation}
			where in the last step, we used once again that $r^{2}(t)=\alpha \langle t\rangle^{-1},$ we deduce that
			$$
			\begin{aligned}
			\frac{d}{dt}\{\langle t\rangle^{\alpha}\|\sqrt{\rho}u\|^{2}_{L^{2}}\}&\leq C\mathcal{H}_{0} \langle t\rangle ^{\alpha}\{\langle t\rangle^{-1-2\delta}+\langle t\rangle^{-\frac{5}{2}_{-}}\}\\
			&\leq C\mathcal{H}_{0} \langle t\rangle ^{-1-2\delta+\alpha},
			\end{aligned}
			$$
			and integrating the results over~$[t_{0},t]$~one has                                                                                                             
			\begin{equation}\label{e22}                                                  \|u\|^{2}_{L^{2}}\leq C\mathcal{H}_{0} \langle t\rangle ^{-2\delta}.
			\end{equation}
			This completes the proof of Proposition \ref{Proposition-A.1}.
		\end{proof}
		
		\begin{Corollary}\label{Corollary-5.2}
			Under the assumptions of Proposition \ref{Lemma-u1}, we have
			\begin{equation}\label{d44}
			\begin{aligned}
			\|t^{\delta}u\|^2_{L^{\infty}([t_{0},T];L^{2})}+\|t^{\delta_{-}}\nabla u\|^2_{L^{2}([t_{0},T];L^{2})}\leq C\mathcal{H}_{0},
			\end{aligned}
			\end{equation}
			where $\mathcal{H}_{0}$ given by (\ref{h}).
		\end{Corollary}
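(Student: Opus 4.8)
The plan is to prove the two halves of \eqref{d44} by separate elementary arguments. The weighted $L^\infty_tL^2_x$ bound is essentially a restatement of the pointwise decay already available: by Proposition \ref{Proposition-A.1} one has $\|u(t)\|_{L^2}^2\le C\mathcal{H}_0\langle t\rangle^{-2\delta}$ for $t\in[t_0,T]$, and since $0<t\le\langle t\rangle$ on this interval this gives $t^{2\delta}\|u(t)\|_{L^2}^2\le(t/\langle t\rangle)^{2\delta}C\mathcal{H}_0\le C\mathcal{H}_0$; taking the supremum over $t$ settles $\|t^\delta u\|^2_{L^\infty([t_0,T];L^2)}\le C\mathcal{H}_0$ with no further work. (Here and below I use that $t_0\in(0,1)$, so $t$ and $\langle t\rangle$ are comparable on $[t_0,\infty)$, namely $t\le\langle t\rangle\le(1+t_0^{-1})t$, which lets me pass freely between the weights $t^\alpha$ and $\langle t\rangle^\alpha$, the discrepancy being absorbed into $C$.)

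For the gradient term I would run a time-weighted energy estimate. Multiplying the basic energy identity \eqref{e4}, $\frac{d}{dt}\|\sqrt\rho u\|_{L^2}^2+2\underline\mu\|\nabla u\|_{L^2}^2\le0$, by $t^{2\delta_-}$ with $\delta_-=\delta-\eta$ for a small fixed $\eta>0$, using the product rule $t^{2\delta_-}\frac{d}{dt}\|\sqrt\rho u\|_{L^2}^2=\frac{d}{dt}\big(t^{2\delta_-}\|\sqrt\rho u\|_{L^2}^2\big)-2\delta_-t^{2\delta_--1}\|\sqrt\rho u\|_{L^2}^2$, then integrating over $[t_0,T]$ and discarding the nonnegative endpoint term at $t=T$, I obtain $2\underline\mu\int_{t_0}^{T}t^{2\delta_-}\|\nabla u\|_{L^2}^2\,dt\le C\|u(t_0)\|_{L^2}^2+C\int_{t_0}^{T}t^{2\delta_--1}\|\sqrt\rho u\|_{L^2}^2\,dt$. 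The boundary term $\|u(t_0)\|_{L^2}^2$ is $\lesssim\mathcal{H}_0$ by the definition \eqref{h}, and into the remaining integral I would insert Proposition \ref{Proposition-A.1} once more, bounding it by $C\mathcal{H}_0\int_{t_0}^{\infty}t^{2\delta_--1}\langle t\rangle^{-2\delta}\,dt$.

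The one point that needs care — and the reason the dissipation weight must be taken as $\delta_-$ rather than $\delta$ — is the finiteness of $\int_{t_0}^{\infty}t^{2\delta_--1}\langle t\rangle^{-2\delta}\,dt$: for large $t$ the integrand is comparable to $t^{2\delta_--1-2\delta}=t^{-1-2\eta}$, which is integrable precisely because $\eta>0$, whereas the endpoint choice $\eta=0$ (i.e.\ weight $t^{2\delta}$ on $\nabla u$) would leave $t^{-1}$ and a logarithmic divergence. Granting this bound, the integral is $\le C\mathcal{H}_0$, and combining it with the first part yields \eqref{d44}. I do not anticipate any other obstacle; the remaining manipulations are routine.
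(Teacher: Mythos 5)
Your proposal is correct and follows essentially the same route as the paper: the paper likewise multiplies the energy identity \eqref{d35} (equivalently \eqref{e4}) by $t^{2\delta_-}$, integrates over $[t_0,T]$, and controls the extra term $\int_{t_0}^T t^{2\delta_--1}\|\sqrt{\rho}u\|_{L^2}^2\,dt$ by the decay of Proposition \ref{Proposition-A.1}, exactly as you do. Your only deviation is cosmetic: you obtain the $t^{\delta}$-weighted $L^\infty_tL^2$ bound directly from Proposition \ref{Proposition-A.1} rather than from the weighted energy inequality (which in the paper's display \eqref{d45} only yields the weight $t^{\delta_-}$ for that piece), and your remark on why the dissipation weight must be $\delta_-$ to make $\int t^{2\delta_--1}\langle t\rangle^{-2\delta}\,dt$ converge is the same point implicit in the paper's notation $t^{(-1)_-}$.
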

		\begin{proof} 
			Multiplying (\ref{d35}) by $t^{2\delta_{-}}$ and then integrating the resulting inequality over $[t_{0},T]$, we get, by applying Proposition \ref{Proposition-A.1}, that
			\begin{equation}\label{d45}
			\begin{aligned}
			&\|t^{\delta_{-}}u\|^2_{L^{\infty}([t_{0},T];L^{2})}+\|t^{\delta_{-}}\nabla u\|^2_{L^{2}([t_{0},T];L^{2})}\\
			\leq& C\|u(t_0)\|_{L^2}+\int^{T}_{t_{0}}t^{(-1)_{-}}\|t^{\delta}u(t)\|^{2}_{L^{2}}dt\leq C\mathcal{H}_{0}.
			\end{aligned}
			\end{equation}
			We completes the proof of Corollary \ref{Corollary-5.2}.
		\end{proof}
	\end{appendix}
	

\begin{thebibliography}{99}
		\bibitem{2007A}  Abidi H. Navier-Stokes equations with variable density and viscosity in the critical space. Rev Mat Iberoam, 2007, 23:537-586
		
		\bibitem{2011AGZ}  Abidi H, Gui G L, Zhang P. On the decay and stability to global solutions of the 3D inhomogeneous Navier-Stokes equations. Commun Pure Appl Math, 2011, 64:832-881
		
		\bibitem{2012AGZ}  Abidi H, Gui G L, Zhang P. On the wellposedness of 3D inhomogeneous Navier-Stokes equations in the critical spaces. Arch Ration Mech Anal, 2012, 204:189-230
		
		\bibitem{2013AGZ} Abidi H, Gui G L, Zhang P. Well-posedness of 3D inhomogeneous Navier-Stokes equations with highly oscillatory initial velocity field. J Math Pures Appl, 2013, 100:166-203
		
		\bibitem{2023AGZ} Abidi H, Gui G L, Zhang P. On the global existence and uniqueness of solution to 2D inhomogeneous incompressible Navier-Stokes equations in critical spaces. https://doi.org/10.48550/arXiv.2312.03990.
		
		\bibitem{2015az} Abidi H, Zhang P. On the global well-posedness of 2D density-dependent Navier-Stokes system with variable viscosity. J Differ Equ, 2015, 259:3755-3802
		
		\bibitem{2015AZ} Abidi H, Zhang P. Global well-posedness of 3D density-dependent Navier-Stokes system with variable viscosity. Sci China Math, 2015, 58:1129-1150
		
		\bibitem{2011BCD} Bahouri H, Chemin J Y, Danchin R. Fourier Analysis and Nonlinear Partial Differential Equations. Grundlehren der Mathematischen Wissenschaften. Berlin: Springer, 2010
		
		\bibitem{2004CK} Cho Y, Kim H. Unique solvability for the density-dependent Navier-Stokes equations. Nonlinear Anal, 2004, 59:465-489
		
		\bibitem{2013CHW} Craig W, Huang X D, Wang Y. Global strong solutions for 3D nonhomogeneous incompressible Navier-Stokes equations. J Math Fluid Mech, 2013, 15:747-758
		
		\bibitem{2003D} Danchin R. Density-dependent incompressible viscous fluids in critical spaces. Proc Roy Soc Edinburgh Sect A, 2003, 133:1311-1334
		
		\bibitem{2004D} Danchin R. Local and global well-posedness results for flows of inhomogeneous viscous fluids. Adv Differ Equ, 2004, 9:353-386
		
		\bibitem{2012DM} Danchin R, Mucha P B. A Lagrangian approach for the incompressible Navier-Stokes equations with variable density. Comm Pure Appl Math, 2012, 65:1458-1480
		
		\bibitem{2022a} Danchin R, Wang S. Global unique solutions for the inhomogeneous Navier-Stokes equation with only bounded density in critical regularity spaces, Comm. Math. Phys, 2023, 399:1647–1688
		
		\bibitem{2009GZ} Gui G L, Zhang P. Global smooth solutions to the 2-D inhomogeneous Navier-Stokes Equations with variable viscosity. Chin Ann Math Ser B, 2009, 30: 607–630
		
		\bibitem{1997D} Desjardins B. Regularity results for two-dimensional flows of multiphase viscous fluids. Arch Ration Mech Anal, 1997 137:135-158
		
		\bibitem{2021HLL} He C, Li J, L${\rm \ddot{u}}$ B Q. Global well-posedness and exponential stability of 3D Navier-Stokes equations with density-dependent viscosity and vacuum in unbounded domains. Arch Ration Mech Anal, 2021 239:1809-1835
		
		\bibitem{2013HPZ} Huang J, Paicu M, Zhang P. Global solutions to 2-D inhomogeneous Navier-Stokes system with general velocity. J Math Pure Appl, 2013, 100: 806-831
		
		\bibitem{2014HW} Huang X D, Wang Y. Global strong solution with vacuum to the two-dimensional density-dependent Navier-Stokes system. SIAM J Math Appl, 2014, 46:1771-1788
		
		\bibitem{2015HW} Huang X D, Wang Y. Global strong solution of 3D inhomogeneous Navier-Stokes equations with density-dependent viscosity. J Differ Equ, 2015, 259:1606-1627
		
		\bibitem{1996L} Lions P L. Mathematical Topics in Fluid Mechanics, Vol. I: Incompressible Models. New York: The Clarendon Press/Oxford University Press, 1996
		
		\bibitem{2019LS} L${\rm \ddot{u}}$ B Q, Song S S. On local strong solutions to the three-dimensional nonhomogeneous incompressible Navier-Stokes equations with density-dependent viscosity and vacuum. Nonlinear Anal Real World Appl, 2019, 46:58-81
		
		\bibitem{2022q} Qian C Y, Qu Y. Global well-posedness for 3D incompressible inhomogeneous asymmetric fluids with density-dependent viscosity. J Differ Equ, 2022, 306:333-402
		
		\bibitem{1986S} Schonbek M E. Large time behaviour of solutions to the Navier-Stokes equations. Comm Partial Differential Equations, 1986, 11:733-763
		
		\bibitem{2017j} Zhai X, Yin Z. Global well-posedness for the 3d incompressible inhomogeneous Navier-Stokes equations and MHD equations. J Differ Equ, 2017, 262:1359-1412
		
		\bibitem{2015Z} Zhang J W. Global well-posedness for the incompressible Navier-Stokes equations	with density-dependent viscosity coefficient. J Differ Equ, 2015, 259:1722-1742
		
		\bibitem{2020z} Zhang P. Global Fujita-Kato solution of 3D inhomogeneous incompressible Navier-Stokes system. adv math, 2020, 363:107007
	\end{thebibliography}
\end{document}